\documentclass{amsart}

\usepackage{color}

\newtheorem{theorem}{Theorem}[section]
\newtheorem{lemma}{Lemma}
\newtheorem{corollary}{Corollary}
\newtheorem{proposition}{Proposition}
\theoremstyle{conjecture}
\newtheorem{conjecture}{Conjecture}
\theoremstyle{definition}

\newtheorem{question}{Question}
\theoremstyle{question}
 \usepackage[latin1]{inputenc}

\theoremstyle{remark}
\newtheorem{remark}{Remark}
\theoremstyle{remarks}
\newtheorem{remarks}{Remarks}
\theoremstyle{example}
\newtheorem{example}{Example}
\numberwithin{equation}{section}

\theoremstyle{proofidea}
\newtheorem*{proofidea}{Idea of the Proof}

\begin{document}
\title{Loops in $SU(2)$ and Factorization, II}

\author{Estelle Basor}
\email{ebasor@aimath.org}

\author{Doug Pickrell}
\email{pickrell@math.arizona.edu}

\begin{abstract} In the prequel to this paper, we proved that for a $SU(2,\mathbb C)$ valued loop
having the critical degree of smoothness (one half of a derivative in the $L^2$ Sobolev sense),
the following statements are equivalent: (1) the Toeplitz and shifted Toeplitz operators associated to the loop
are invertible, (2) the loop has a unique triangular factorization, and (3) the loop has a unique root subgroup
factorization. The analytic aspects of these equivalences hinge on factorization formulas for determinants of Toeplitz operators. In addition to discussing some consequences, the main point of this sequel is to discuss generalizations to measurable loops, in particular loops of vanishing mean oscillation. The VMO generalization hinges on an operator-theoretic factorization for Toeplitz operators, in lieu of factorization for determinants.

\end{abstract}
\maketitle

\setcounter{section}{-1}

\section{Introduction}\label{Introduction}

This paper concerns the Polish topological groups of maps $W^{1/2}(S^1,SU(2))$, $VMO(S^1,SU(2))$, and $Meas(S^1,SU(2))$ (equivalence classes of $SU(2,\mathbb C)$ valued loops which have one half of a derivative in the $L^2$ Sobolev sense, are of vanishing mean oscillation, and are Lebesgue measurable, respectively; the basic background - such as the Polish topologies of these groups - is recalled in Section \ref{notation}). In an attempt to motivate the subject matter, we first consider a broader perspective.

Suppose that $K$ is a compact Lie group. The equatorial inclusions
$$ S^0 \subset S^1 \subset S^2 \subset S^3 \subset ... $$
induce (down arrow) inclusions and (left to right arrow) trace homomorphisms of groups
$$ \begin{matrix}...& \to & C^{\infty}(S^3,K)& \to & C^{\infty}(S^2,K) & \to & C^{\infty}(S^1,K)&\to &C^{\infty}(S^0,K)\\
\downarrow & & \downarrow & &\downarrow & &\downarrow & &&\\
...& \to &W^{3/2}(S^3,K)& \to &W^1(S^2,K) &\to & W^{1/2}(S^1,K)&&\\
\downarrow & & \downarrow & &\downarrow & &\downarrow &&\\
...& \to & VMO(S^3,K)& \to & VMO(S^2,K) & \to & VMO(S^1,K)&&&\\
\downarrow & & \downarrow & &\downarrow & &\downarrow &&&\\
...&  & Meas(S^3,K)&  & Meas(S^2,K) &  & Meas(S^1,K)&&&
\end{matrix}$$
The groups of smooth maps are Frechet Lie groups (see Section 3.2 of \cite{PS}), hence it is known what they look like locally, and their global topology can be analyzed using conventional methods of algebraic topology.

For the groups $W^{d/2}(S^d,K)\subset VMO(S^d,K)\subset Meas(S^d,K)$, generic group elements are not continuous mappings (Recall that $s=d/2$ is the critical $L^2$ exponent: the Sobolev embedding $W^{s,L^2}(S^d)\to C^0(S^d)$ holds for $s>d/2$ and marginally fails for $s=d/2$). The usual approach to understanding the local structure of continuous mapping groups is to fix a proper open coordinate neighborhood of $1\in K$ (homeomorphic to $\mathbb R^n$, say) and consider the set of maps with image in this neighborhood. This fails in our context because generic group elements in this set are locally unbounded, and hence this set is not an open neighborhood of $1\in W^{d/2}(S^d,K)$ (or $VMO$, or $Meas$). For similar reasons conventional methods of algebraic topology do not apply to understand the global topology. This is problematic, because it is important to understand the local and global
topology of these (Polish) mapping groups; see \cite{BN1}, \cite{BN2}, \cite{Brezis}, and references, for foundational work in this direction and further motivation. The simplest hypothesis - this is pure speculation - is that
for all $d\ge 1$, $W^{d/2}(S^d,K)$ and $VMO(S^d,K)$ are topological manifolds (they are definitely not smooth Lie groups as Polish topological groups), and the inclusions
\begin{equation}\label{homotopyequiv}C^{\infty}(S^d,K) \to W^{d/2}(S^d,K) \to VMO(S^d,K)\end{equation}
are homotopy equivalences.  This is exemplified by the existence of trace maps for VMO (see \cite{BN2}, and note we are considering an equatorial trace) and the nonexistence of trace maps for measurable maps in the above diagram. More directly relevant to this paper,
in the elemental case $d=1$, the global topology for the smooth loop space is intimately related to the map
\begin{equation}\label{Fredholm}C^{\infty}(S^1,K)\to Fred(H_+):g \to A(g)\end{equation} where $A(g)$ is the Toeplitz operator with symbol $g$ (see chapter 6 of \cite{PS}); the point is that $VMO(S^1,K)$ is the natural domain (see Proposition \ref{opertopology} below for a more precise statement).

\begin{remark} $Meas(S^d,K)$ is an outlier in this topological digression. Since its definition depends only upon the Lebesgue measure class of $S^d$, it is isomorphic to $Meas([0,1],K)$, and it is contractible.
\end{remark}

In this paper $d=1$, unless noted otherwise. In this case the claim about the homotopy equivalences basically follows from the Grassmannian model approach in chapter 8 of \cite{PS} (with modifications which we will note). We are mainly interested in technology which is useful in understanding the local structure. We will focus on $K=SU(2)$, rather than a general compact Lie group, because the main issues are more analytic than Lie theoretic (see \cite{PP} for the general Lie theoretic framework). In the prequel to this paper, we showed that for $g\in W^{1/2}(S^1,SU(2))$, the following statements are equivalent: (1) the Toeplitz and shifted Toeplitz operators associated to $g$ are invertible, (2) $g$ has a unique triangular factorization, and (3) $g$ has a unique root subgroup factorization (we will review this in Section \ref{W1/2}). This is a statement about the (open) top stratum of the $W^{1/2}$ loop group, and there is a  generalization to the finite codimensional lower strata. The key to the equivalence of (1)-(3), and in truth the more interesting point, is that there exists an explicit factorization for $det(A(g)A(g^{-1}))$, akin to the Plancherel formula in linear Fourier analysis (see (\ref{Toeplitz0})). A corollary of this is that $W^{1/2}(S^1,SU(2))$ is a nonsmooth topological manifold modeled on $l^2$, and it is homotopy equivalent to the smooth loop group.

\begin{remark} The scalar $det(A(g)A(g^{-1}))$ appears prominently in Harold Widom's
landmark paper \cite{Widom}, as the constant term in the expansion of determinants of block Toeplitz matrices for
symbols that are bounded and in $W^{1/2}$. This paper not only gave the asymptotics in
the block case, but paved the way for operator theory and Banach algebra approaches
for the asymptotic expansions for determinants of structured operators. This constant
is related to quantities that appear in the theory of tau-functions, dimer-models,
random matrix theory, and other areas of mathematical physics and is now commonly
called Widom's constant.
\end{remark} 

The main point of this paper is to investigate extensions of this theory to VMO (and more general Besov spaces which interpolate between $W^{1/2}$ and VMO, following Peller), and some qualified extensions to the measurable (or $L^2$) context. In the VMO context, the Toeplitz operator $A(g)$ is Fredholm, the determinant $det(A(g))$ makes sense as a section of a determinant line bundle, but the scalar expression $det(A(g)A(g^{-1}))$ is identically zero
in the complement of $W^{1/2}(S^1,SU(2))$. Roughly speaking the theory extends because, as we essentially observed in \cite{BP} (we will need a slight refinement), there is actually a factorization of $A(g)$, as an operator, in root subgroup coordinates.

\begin{remark}\label{factremark} In the notation of Theorem 1.4 of \cite{BP}, we will show that
$$A(k_1^*\left( \begin{matrix}e^{\chi}&0\\0&e^{-\chi}\end{matrix}\right) k_2)= A(k_1^* \left( \begin{matrix}e^{\chi_-}&0\\0&e^{-\chi_-}\end{matrix}\right)) A(\left( \begin{matrix}e^{\chi_0}&0\\0&e^{-\chi_0}\end{matrix}\right))A(\left( \begin{matrix}e^{\chi_+}&0\\0&e^{-\chi_+}\end{matrix}\right)k_2) $$
and we will see that it is relatively easy to analyze the Toeplitz operators on the right hand side. If we had observed this in \cite{BP},
then we could have eliminated Lemmas 4 and 5 in Section 5.1, and this would have greatly simplified the exposition.
The caveat in the context of this paper is that for $\chi\in VMO(S^1,i\mathbb R)$, while $exp(\chi)$ is bounded and
$\chi_{\pm}\in VMO(S^1)$, $exp(\chi_{\pm})$ are not generally bounded. Thus there are domain issues which we have suppressed.

\end{remark}

\subsection{Plan of the Paper}

In Section \ref{notation} we establish basic notation and recall some background results, especially the operator theoretic realization of the topologies for the various spaces of loops.

In the first part of Section \ref{W1/2} we succinctly outline the main results from \cite{Pi1} for loops into $SU(2)$ with critical degree of smoothness in the $L^2$ Sobolev sense (the $W^{1/2}$ theory). These results hinge on various Plancherel-esque identities involving block Toeplitz determinants, extending identities of Szego and Widom. In the second part of the section we discuss coordinates for $ W^{1/2}(S^1,SU(2))$ as a topological manifold, using root subgroup factorization.  In the third part of the section we outline how the Birkhoff stratification of the smooth loop space extends to $ W^{1/2}(S^1,SU(2))$.

In Section \ref{L2} we consider measurable maps, which we refer to as the $L^2$ theory. Here we are probing
the edge of deterministic results.  For a measurable map into $SU(2)$, the Toeplitz operator is not in general Fredholm. Uniqueness in root subgroup factorization is lost because of the existence of singular inner functions. Some implications need to be formulated in almost sure terms. Beyond the narrow aims of this paper, our main goal is actually to understand probabilistic statements which are just beyond this edge (\cite{Pi3}).

In Section \ref{VMO} we consider maps of vanishing mean oscillation, and more generally maps satisfying a Besov
condition $B_p^{1/p}$ (which interpolates between $W^{1/2}$ and $VMO$). As in the case of $W^{1/2}$ loops, by specializing our $L^2$ results, we show that $VMO(S^1,SU(2))$ is a topological manifold and (following the ideas of \cite{PS}) we note that the inclusion of smooth loops is a homotopy equivalence.

There are two appendices in which we mention some combinatorial and analytic issues which we have not been able to resolve.

\section{Notation and Background}\label{notation}

If $f(z)=\sum f_nz^n$, then we will write
$$f=f_-+f_0+f_+$$
where $f_-(z)=\sum_{n<0}f_nz^n$ and
$f_+(z)=\sum_{n>0}f_nz^n$, $f_{-0}=f_-+f_0$, $f_{0+}=f_0+f_+$, and $f^*(z)=\sum (f_{-n})^*z^n$,
where $w^*=\bar{w}$ is the complex
conjugate of the complex number $w$.
If the Fourier series is convergent at a point $z\in S^1$, then $f^*(z)$ is the conjugate of the complex number $f(z)$.  If $f \in
H^0(\Delta)$, then $f^* \in H^0(\Delta^*)$, where $\Delta$ is the
open unit disk, $\Delta^*$ is the open unit disk at $\infty$, and
$H^0(U)$ denotes the space of holomorphic functions for a domain $U\subset \mathbb C$.

$W^{1/2}(S^1,\mathbb C)$ denotes the Hilbert space of (equivalence classes of Lebesgue) measurable functions $f(z)$ which have half a derivative in the $L^2$ Sobolev sense; the precise form of the norm is not important, but one possibility is
$$|f|_{W^{1/2}}=(\sum_{n=-\infty}^{\infty}(1+n^2)^{1/2}|\widehat f(n)|^2)^{1/2}$$
where $\widehat f$ denotes the Fourier transform.
$VMO(S^1)$ denotes the Banach space of (equivalence classes of Lebesgue) measurable functions which are of vanishing mean oscillation, or equivalently the closure of the subspace of continuous functions in BMO; a typical choice of norm is
$$|f|_{BMO} =|\widehat f(0)|+\sup_{I} \frac{1}{|I|}\int_I|f-f_I|d\theta $$
where $f_I$ denotes the average of $f$ over the arc $I$, and the supremum is over all arcs in $S^1$. $Meas(S^1,\mathbb C)$ denotes equivalence classes of Lebesgue measurable functions with the topology corresponding to convergence in (Lebesgue) measure; this is induced by a complete separable metric, see below. Besov spaces $B^{1/p}_p$ which interpolate between $W^{1/2}$ and $VMO$ for $2\le p \le \infty$ will be used below and in Section \ref{VMO} (for this we will refer to chapter 6 and Appendix 2 of \cite{Peller}).

$\mathbf w^{1/2}$ denotes
the Hilbert space of complex sequences $\zeta$ such that $\sum_{k=1}^{\infty}k|\zeta_k|^2<\infty$.

$L_{fin}SU(2)$ ($L_{fin} SL(2,{\mathbb C})$) denotes the group
consisting of functions $S^1 \to SU(2)$ ($SL(2,\mathbb C)$,
respectively) having finite Fourier series, with pointwise
multiplication. For example, for $\zeta \in \mathbb C$ and $n\in
\mathbb Z$, the function
$$S^1 \to SU(2):z \to
\mathbf a(\zeta) \left(\begin{matrix} 1&
\zeta z^{-n}\\
-\bar{\zeta}z^n&1\end{matrix} \right),$$ where $\mathbf
a(\zeta)=(1+\vert \zeta \vert ^2)^{-1/2}$, is in $L_{fin}SU(2)$.

As in the introduction, consider the groups
$$W^{d/2}(S^d,SU(2))\subset VMO(S^d,SU(2))\subset Meas(S^d,SU(2))$$

\begin{remark} (A digression) In the topologies induced
by the Banach algebras $L^{\infty}\cap W^{d/2}$, $QC:=L^{\infty}\cap VMO$, and $L^{\infty}$, respectively, these are Banach Lie groups. However in all cases smooth loops are not dense, and there are uncountably many connected components. Convergence in each of these Banach algebras implies uniform convergence, hence the identity component in each case consists of classes which have continuous representatives. This is interesting: for $X$ of dimension $d\ge 1$ and not necessarily compact, the Banach algebra $BC\cap W^{d/2}(X)$ ($BC$ stands for bounded continuous) is closely related to the notion of a Royden algebra; it is conjecturally a quasiconformal invariant of $X$ (see \cite{Lewis}).
However this is {\bf not} what we are interested in.

\end{remark}

In this paper we will always view $W^{d/2}(S^d,SU(2))$, $VMO(S^d,SU(2))$, and $Meas(S^d,SU(2))$ as topological groups with the complete separable (Polish) topologies induced by $W^{d/2}$, $VMO$, and convergence in probability, respectively. For measurable maps there is a well-known way to represent the topology using operator methods: the bijection
$$Meas(S^d,U(2)) \to \{\text{unitary multiplication operators on }L^2(S^d,\mathbb C^2)\}$$
is a homeomorphism with respect to the convergence in probability topology and the strong (or weak)
topology for unitary multiplication operators (see Section 2 of \cite{Moore}). For the other mapping groups, essentially following \cite{PS}, we will substitute restricted unitary groups (see below).

Now suppose that $d=1$. In this setup the inclusions
$$L_{fin}SU(2)\subset C^{\infty}(S^1,SU(2))\subset W^{1/2}(S^1,SU(2)) \subset VMO(S^1,SU(2)) \subset Meas(S^1,SU(2))$$
are dense. The first three inclusions are homotopy equivalences (for the first inclusion this follows from Proposition 5.2.5 of \cite{PS}; for the second and third inclusions, this follows from the Grassmannian model approach in chapter 8 of \cite{PS}, as we will explain below). One of the goals of this paper is to understand
some analytic aspects of this in a concrete way. The fourth inclusion is a map into a contractible space.

Suppose that $g\in L^1(S^1,SL(2,{\mathbb C}))$. A triangular
factorization of $g$ is a factorization of the form
\begin{equation}\label{factorization}g=l(g)m(g)a(g)u(g),\end{equation}
where
\[l=\left(\begin{array}{cc}
l_{11}&l_{12}\\
l_{21}&l_{22}\end{array} \right)\in H^0(\Delta^{*},SL(2,{\mathbb
C})),\quad l(\infty )=\left(\begin{array}{cc}
1&0\\
l_{21}(\infty )&1\end{array} \right),\] $l$ has a $L^2$ radial
limit, $m=\left(\begin{array}{cc}
m_0&0\\
0&m_0^{-1}\end{array} \right)$, $m_0\in S^1$,
$a(g)=\left(\begin{array}{cc}
a_0&0\\
0&a_0^{-1}\end{array} \right)$, $a_0>0$,
\[u=\left(\begin{array}{cc}
u_{11}&u_{12}\\
u_{21}&u_{22}\end{array} \right)\in H^0(\Delta ,SL(2,{\mathbb
C})),\quad u(0)=\left(\begin{array}{cc}
1&u_{12}(0)\\
0&1\end{array} \right),\] and $u$ has a $L^2$ radial limit. Note
that (\ref{factorization}) is an equality of measurable functions
on $S^1$. A Birkhoff (or Wiener-Hopf, or Riemann-Hilbert)
factorization is a factorization of the form $g=g_-g_0g_+$, where
$g_-\in H^0(\Delta^*,\infty;SL(2,\mathbb C),1)$, $g_0\in
SL(2,\mathbb C)$, $g_+\in H^0(\Delta,0;SL(2,\mathbb C),1)$, and
$g_{\pm}$ have $L^2$ radial limits on $S^1$. Clearly $g$ has a
triangular factorization if and only if $g$ has a Birkhoff
factorization and $g_0$ has a triangular factorization, in the
usual sense of matrices.

As in \cite{PS}, consider the polarized Hilbert space
\begin{equation}\label{polarization} H:=L^2(S^1,C^2)=H_{+}\oplus H_{-},\end{equation} where
$H_{+}=P_{+}H$
consists of $L^2$-boundary values of functions holomorphic in
$\Delta$. If $g\in L^{\infty}(S^1,SL(2,\mathbb C))$, we write the
bounded multiplication operator defined by $g$ on $H$ as
\begin{equation}\label{multiplicationop}M_g=\left(\begin{array}{cc}
A(g)&B(g)\\
C(g)&D(g)\end{array} \right)\end{equation} where
$A(g)=P_{+}M_gP_{+}$ is the (block) Toeplitz operator associated
to $g$ and so on. If $g$ has the Fourier expansion $g=\sum
g_nz^n$,
$g_n=\left(\begin{matrix}a_n&b_n\\c_n&d_n\end{matrix}\right)$,
then relative to the basis for $H$
\begin{equation}\label{basis} ..
\epsilon_1z,\epsilon_2z,\epsilon_1,\epsilon_2,\epsilon_1z^{-1},\epsilon_2z^{-1},..\end{equation}
where $\{\epsilon_1,\epsilon_2\}$ is the standard basis for
$\mathbb C^2$, the matrix of $M_g$ is block periodic of the form
\begin{equation}\label{matrix}\begin{array}{ccccccccc}&.&.&.&.&.&.&.&\\..&a_0&b_0&a_1&b_1&\vert&a_2&b_2&..
\\..&c_0&d_0&c_1&d_1&\vert&c_{2}&d_{2}&..\\
..&a_{-1}&b_{-1}&a_0&b_0&\vert&a_1&b_1&.. \\
..&c_{-1}&d_{-1}&c_0&d_0&\vert&c_1&d_1&..\\-&-&-&-&-&-&-&-&-\\
..&a_{-2}&b_{-2}&a_{-1}&b_{-1}&\vert&a_0&b_0&..\\
..&c_{-2}&d_{-2}&c_{-1}&d_{-1}&\vert&c_0&d_0&..\\&.&.&.&.&.&.&.&
\end{array}\end{equation}
From this matrix form, it is clear that, up to equivalence, $M_g$
has just two types of ``principal minors", the matrix representing
$A(g)$, and the matrix representing the shifted Toeplitz operator
$A_1(g)$, the compression of $M_g$ to the closed subspace spanned by
$\{\epsilon_iz^j:i=1,2,j>0\}\cup\{\epsilon_1\}$.

Given the polarization $H=H_+\oplus H_-$ and a
symmetrically normed ideal $\mathcal I\subset \mathcal L(H)$, there is an associated Banach
$*$-algebra, $\mathcal L_{(\mathcal I)}$, which consists of bounded operators on $
H$,
represented as two by two matrices as in (\ref{multiplicationop}) such that $B,C\in \mathcal I$ with
the norm
\begin{equation}\label{norm}\vert\left(\begin{matrix} A&\\&D\end{matrix} \right)\vert_{\mathcal L}+\vert\left
(\begin{matrix} &B\\C&\end{matrix} \right)\vert_{\mathcal I}\end{equation}
and the usual $*$-operation.  The
corresponding unitary group is
$$U_{(\mathcal I)}=U(H)\cap \mathcal L_{(\mathcal I)};$$
it is referred to as a restricted unitary group in \cite{PS}.  There
are two standard topologies on $U_{(\mathcal I)}$.  The first is the induced
Banach topology, and in this topology $U_{(\mathcal I)}$ has the additional
structure of a Banach Lie group. The second topology, the one we will always use, is the Polish
topology for which convergence means that for
$g_n,g\in U_{(\mathcal I)}$, $g_n\to g$ if and only if $g_n\to g$ strongly and
$$\left(\begin{matrix} &B_n\\C_n&\end{matrix} \right)\to\left(\begin{matrix} &B\\
C&\end{matrix} \right)\quad in\quad \mathcal I$$

\begin{remark}For the unitary group of a countably infinite dimensional Hilbert space,
the group of unitary operators with either the strong or with the operator norm topology is contractible.
Consequently the algebraic topology of $U_{(\mathcal I)}$ is the same for the first and second topologies.
But we are always interested in the second (Polish) topology.
\end{remark}

In the following proposition $\mathcal L_p$ refers to the Schatten ideal, and the Besov space $B_p^{1/p}$
is reviewed in chapter 6 of \cite{Peller}.

\begin{proposition}\label{opertopology} (a) For the Hardy polarization (\ref{polarization}) and $g\in L^{\infty}(S^1,\mathcal L(\mathbb C^2))$, $g\in \mathcal L_{(\mathcal L_p)}$ iff $g$ belongs to the Besov space $B_p^{1/p}$ for $p<\infty$ and $VMO$ for $p=\infty$.

(b) For $p<\infty$, $B_p^{1/p}(S^1,K) \to U_{(\mathcal L_p)}(H_+\oplus H_-) $ is a homeomorphism onto its image;
in particular

(b') $W^{1/2}(S^1,K) \to U_{(\mathcal L_2)}(H_+\oplus H_-) $ is a homeomorphism onto its image.

(c) $VMO(S^1,K) \to U_{(\mathcal L_{\infty})}(H_+\oplus H_-) $ is a homeomorphism onto its image.

(d) $U_{(\mathcal L_{\infty})}(H_+\oplus H_-) \to Fred(H_+)$ is a homotopy equivalence.

\end{proposition}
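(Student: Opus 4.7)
\emph{Plan for part (a).} The off-diagonal blocks $B(g)=P_+M_gP_-$ and $C(g)=P_-M_gP_+$ decompose, in the basis (\ref{basis}), into finite arrays of scalar Hankel operators whose symbols are the matrix entries of $g$. I would therefore reduce (a) to the scalar Hankel theory and apply Peller's theorem (Chapter 6 of \cite{Peller}): a scalar Hankel operator lies in the Schatten class $\mathcal L_p$ iff its symbol lies in the Besov space $B_p^{1/p}$, with equivalent norms, for $1\le p<\infty$. For $p=\infty$ (interpreted as the compact operator ideal), invoke Hartman's theorem together with the Sarason identity $L^\infty\cap(H^\infty+C(S^1))=QC=L^\infty\cap VMO$. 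Since the entries of an $SU(2)$-valued $g$ are automatically in $L^\infty$, the bounded hypothesis needed to apply these theorems is free.

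\emph{Plan for parts (b), (b$'$), (c).} The assignment $g\mapsto M_g$ is an injective group homomorphism. Convergence $g_n\to g$ in $B_p^{1/p}$ (resp.\ VMO) delivers $\mathcal L_p$- (resp.\ compact-operator-) convergence of both off-diagonal blocks via Peller/Hartman, while uniform boundedness (automatic for $SU(2)$-values) plus $L^2$-convergence of $g_n$ gives strong convergence of $M_{g_n}$ and hence of the diagonal blocks. Conversely, if $M_{g_n}\to M_g$ in the Polish topology of $U_{(\mathcal L_p)}$, then Peller/Hartman applied to $g_n-g$ gives convergence in the Besov (resp.\ BMO) seminorm, and strong convergence of $A(g_n)-A(g)$ controls the missing Fourier constant; so $g_n\to g$ in $B_p^{1/p}$ (resp.\ VMO). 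Part (b$'$) is the case $p=2$.

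\emph{Plan for part (d).} I would factor $g\mapsto A(g)$ through a restricted Grassmannian. Let $Gr_{(\mathcal L_\infty)}(H)$ denote the space of closed subspaces $W\subset H$ with $P_+|_W\colon W\to H_+$ Fredholm and $P_-|_W$ compact, topologized through their orthogonal projections. Then $U_{(\mathcal L_\infty)}$ acts transitively on $Gr_{(\mathcal L_\infty)}$ via $g\cdot W=gW$ with basepoint $H_+$, and the stabilizer is the block-diagonal subgroup $U(H_+)\times U(H_-)$. This stabilizer is contractible in the strong operator topology by the Dixmier--Douady theorem, so the quotient map $U_{(\mathcal L_\infty)}\to Gr_{(\mathcal L_\infty)}$ is a principal bundle with contractible fiber and hence a weak homotopy equivalence. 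The map $W\mapsto P_+|_W$ then provides a homotopy equivalence $Gr_{(\mathcal L_\infty)}\to Fred(H_+)$, with quasi-inverse sending $T$ to the closure of the graph of a small compact perturbation of $T$; both composites are canonically homotopic to the identity through straight-line homotopies in the space of compact perturbations. Composing the two equivalences yields the statement.

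\emph{Main obstacle.} The serious work is in (d). The classical treatment in Chapter 6 of \cite{PS} is carried out for the Hilbert--Schmidt group $U_{(\mathcal L_2)}$, and adapting the homotopy constructions to $\mathcal L_\infty$ requires verifying that every step uses only the ideal property and compactness, not the Hilbert--Schmidt structure. The delicacy lies in working throughout in the Polish (strong plus compact-ideal) topology rather than in the operator-norm topology: the contractibility of unitary groups and the graph-inverse construction must both be realized continuously in this weaker sense.
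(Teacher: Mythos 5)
Your proposal is correct and takes essentially the same route as the paper, whose own proof is little more than a list of citations (Peller's Besov--Schatten correspondence and Hartman's theorem for (a)--(c), and Pressley--Segal Propositions (6.2.4) and (6.3.3) for the Grassmannian argument behind (d)). One small technical note on (d): the map $W\mapsto P_+|_W$ has domain $W$ rather than $H_+$, so the intended homotopy equivalence $Gr_{(\mathcal L_\infty)}\to Fred(H_+)$ should be something like $W\mapsto P_+P_W|_{H_+}$, and $g\mapsto A(g)$ factors through $g\mapsto gH_+$ only up to homotopy, since right-multiplying $g$ by a block-diagonal unitary alters $A(g)$ while fixing $gH_+$.
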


\begin{proof} Part (a) is due to Peller for $0<p<\infty$ (see chapter 6 of \cite{Peller}) and Hartman for $p=\infty$ (see pages 27-28 of chapter 1 of \cite{Peller}). Note that when only considering the Hankel operators $B$ and $C$, one can relax the boundedness hypothesis on $g$ to $g\in L^2$.

Part (b') follows by inspecting the Hilbert-Schmidt properties of the matrix (\ref{matrix}). Parts (b) and (c)
are basically implicit in the results of Peller and Hartmann, see chapters 6 and 1 of \cite{Peller}, respectively. The images in parts (b) and (c), for $K=U(2)$,
are described in Proposition (6.3.3) of \cite{PS}. Part (d) is essentially Proposition (6.2.4) of \cite{PS}.
\end{proof}

Given a countably infinite dimensional Hilbert space such as $H_+$, Quillen constructed a holomorphic determinant line bundle $Det\to Fred(H_+)$ and a canonical holomorphic section $det$ which vanishes on the complement of invertible operators. This induces a determinant bundle
$$A^*Det \to VMO(S^1,SU(2)) $$
(There is a discussion of this, and references, at the end of Section 7.7 of \cite{PS}). This is an elegant way to think about the following corollary, but there is also a simple proof using the operator-theoretic realization of the VMO topology.

\begin{corollary}\label{VMOinvert_ops} For $VMO(S^1,SU(2)) $ the set of loops with invertible Toeplitz operators is defined by the equation
$det(A(g))\ne 0$, hence is open. The same applies for the shifted Toeplitz operator.
\end{corollary}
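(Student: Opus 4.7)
The plan is to bypass Quillen's determinant line bundle and present the direct operator-theoretic argument alluded to just before the corollary. The starting point is the unitarity of the multiplication operator $M_g$ on $H = L^{2}(S^{1},\mathbb{C}^{2})$: reading off the $H_{+}\to H_{+}$ blocks of $M_g^{*}M_g = I$ and $M_g M_g^{*} = I$ gives the two identities
\[
A(g)^{*}A(g) \;=\; I_{H_{+}} - C(g)^{*}C(g), \qquad A(g)A(g)^{*} \;=\; I_{H_{+}} - B(g)B(g)^{*},
\]
and by Proposition \ref{opertopology}(a),(c), for $g\in VMO(S^{1},SU(2))$ both off-diagonal blocks $B(g)$ and $C(g)$ lie in the compact ideal $\mathcal{L}_{\infty}$. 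Since $\ker A(g) = \ker A(g)^{*}A(g)$ and $\ker A(g)^{*} = \ker A(g)A(g)^{*}$, and since a self-adjoint compact perturbation of the identity is invertible as soon as it is injective, $A(g)$ is invertible if and only if both $I - C(g)^{*}C(g)$ and $I - B(g)B(g)^{*}$ are invertible.

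Given this reduction, openness is essentially immediate. By Proposition \ref{opertopology}(c) and the definition of the Polish topology on $U_{(\mathcal{L}_{\infty})}$, the maps $g \mapsto B(g)$ and $g \mapsto C(g)$ are continuous from $VMO(S^{1},SU(2))$ into $\mathcal{L}_{\infty}$ with its operator norm; hence $g \mapsto I - C(g)^{*}C(g)$ and $g \mapsto I - B(g)B(g)^{*}$ are operator-norm continuous into $I + \mathcal{L}_{\infty}$. Since invertibility of a compact perturbation of the identity is an open condition in the operator norm, both constraints cut out open subsets of $VMO(S^{1},SU(2))$, whose intersection $\{g : A(g) \text{ invertible}\}$ is therefore open.

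For the identification with $\{g : \det A(g) \ne 0\}$: for each $g \in VMO(S^{1},SU(2))$ the Toeplitz operator $A(g)$ is Fredholm (e.g.\ $A(g)A(g^{-1}) = I - B(g)C(g^{-1})$ and $A(g^{-1})A(g) = I - B(g^{-1})C(g)$ are both compact perturbations of $I$, so $A(g)$ is invertible in the Calkin algebra), and by construction Quillen's canonical section $\det$ on $Fred(H_{+})$ vanishes precisely on non-invertible Fredholm operators, so $\det A(g) = 0$ iff $A(g)$ is not invertible. The shifted Toeplitz operator $A_{1}(g)$ is handled identically: it is the compression of $M_g$ to a codimension-one modification of $H_{+}$, the analogous block identities hold relative to the shifted polarization, and the off-diagonal blocks are again in $\mathcal{L}_{\infty}$ depending continuously on $g$ in operator norm.

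There is no serious obstacle: once the operator-theoretic realization of the topology is in hand, the argument is almost entirely formal, and the only analytic input beyond Proposition \ref{opertopology} is the elementary fact that invertibility of $I$-plus-compact is an open condition in the operator norm. The one mildly subtle point worth flagging is the implication ``$A(g)^{*}A(g)$ and $A(g)A(g)^{*}$ both invertible $\Rightarrow$ $A(g)$ invertible'', which is precisely the reason the identities from \emph{both} $M_g^{*}M_g = I$ and $M_g M_g^{*} = I$ are used simultaneously rather than either one alone.
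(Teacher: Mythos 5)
Your proof is correct and takes essentially the same approach as the paper's own: both bypass the determinant-bundle framing in favor of the Hankel-operator identities $A(g)^*A(g)=1-C(g)^*C(g)$ and $A(g)A(g)^*=1-B(g)B(g)^*$, the norm continuity of $g\mapsto B(g),C(g)$ into the compacts furnished by the operator-theoretic realization of the VMO topology, and the openness of invertibility for compact perturbations of the identity. If anything you are slightly more careful than the paper: the paper verifies only that $A(g_n)A(g_n)^*$ becomes invertible and tacitly relies on the index-zero property of $A(g_n)$ to conclude, whereas you use both identities simultaneously to obtain injectivity of $A(g)$ and of $A(g)^*$ outright.
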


\begin{proof} Suppose that $g_n\in VMO(S^1,SU(2))$ converges in VMO to $g$ and $A(g)$ is invertible. We must show that $A(g_n)$ is invertible for large $n$.
$$A(g_n)A(g_n^{-1})=1-B(g_n)C(g_n^{-1})=1-B(g_n)B(g_n)^*$$
By part (c) of the preceding proposition, this converges uniformly to $A(g)A(g^{-1})=A(g)A(g)^*=1-B(g)B(g)^*$, which is invertible. This implies that $A(g_n)A(g_n^{-1})$ is invertible for large $n$, hence $A(g_n)$ is invertible for large $n$. \end{proof}

\begin{remark} For $Meas(S^1,SU(2)) $, or even for its diagonal subgroup $\{\left(\begin{matrix}\lambda(z)&0\\0&\lambda(z)^{-1}\end{matrix}\right)$, the set of loops with invertible Toeplitz operators is NOT open. To see this let $\lambda_n=exp(f_n):S^1 \to S^1$ be a continuous loop which rapidly winds once
around the circle in the interval $[0,1/n]$, and equals $1$ otherwise (this is called a blip). This has degree one, hence the Toeplitz operator $\dot A(\lambda)$ has Fredholm index $-1$ and is not
invertible for all $n$. Nonetheless $\lambda_n \to 1$ in measure.

This line of argument does not apply to $VMO(S^1,S^1)$, because degree is well-defined, continuous and separates the group into path connected components - this is the main point of \cite{BN1}.
\end{remark}

\section{The $W^{1/2}$ Theory}\label{W1/2}

The first part of this section is a succinct review of relevant results from \cite{Pi1}. The subsequent
subsections describe some consequences.

\begin{theorem}\label{introtheorem1}  Suppose that $k_1:S^1\to SU(2)$ is Lebesgue measurable. The
following are equivalent:

(I.1) $k_1 \in W^{1/2}(S^1,SU(2))$ and is of the form
$$k_1(z)=\left(\begin{matrix} a(z)&b(z)\\
-b^*(z)&a^*(z)\end{matrix} \right),\quad z\in S^1,$$ where $a,b\in H^0(\Delta)$, $a(0)>0$,
and $a$ and $b$ do not
simultaneously vanish at a point in $\Delta$.

(I.2) $k_1$ has a (root subgroup) factorization, in the sense that
$$k_1(z)=\lim_{n\to\infty}\mathbf a(\eta_n)\left(\begin{matrix} 1&-\bar{\eta}_nz^n\\
\eta_nz^{-n}&1\end{matrix} \right)..\mathbf
a(\eta_0)\left(\begin{matrix} 1&
-\bar{\eta}_0\\
\eta_0&1\end{matrix} \right)$$ for a.e. $z\in S^1$, where $(\eta_i)\in \mathbf w^{1/2}$
and the limit is understood in the $W^{1/2}$ sense.

(I.3) $k_1$ has triangular factorization of the form
$$\left(\begin{matrix} 1&0\\
y^*(z)&1\end{matrix} \right)\left(\begin{matrix}\mathbf a_1&0\\
0&\mathbf a_1^{-1}\end{matrix} \right)\left(\begin{matrix} \alpha_1 (z)&\beta_1 (z)\\
\gamma_1 (z)&\delta_1 (z)\end{matrix} \right),$$ where $\mathbf a_1>0$, $y=\sum_{j=0}^{\infty} y_jz^j$ and $\alpha_1(z),\beta_1(z)\in W^{1/2}$.

Suppose that $k_2:S^1 \to SU(2)$ is Lebesgue measurable. The following are equivalent:

(II.1) $k_2\in W^{1/2}(S^1,SU(2))$ and is of the form
$$k_2(z)=\left(\begin{matrix} d^{*}(z)&-c^{*}(z)\\
c(z)&d(z)\end{matrix} \right),\quad z\in S^1,$$ where $c,d\in
H^0(\Delta)$, $c(0)=0$, $d(0)>0$, and $c$ and $d$ do not
simultaneously vanish at a point in $\Delta$.

(II.2) $k_2$ has a (root subgroup) factorization of the form
$$k_2(z)=\lim_{n\to\infty}\mathbf a(\zeta_n)\left(\begin{matrix} 1&\zeta_nz^{-n}\\
-\bar{\zeta}_nz^n&1\end{matrix} \right)..\mathbf
a(\zeta_1)\left(\begin{matrix} 1&
\zeta_1z^{-1}\\
-\bar{\zeta}_1z&1\end{matrix} \right)$$ for a.e. $z\in S^1$, where $(\eta_i)\in \mathbf w^{1/2}$
and the limit is understood in the $W^{1/2}$ sense.

(II.3) $k_2$ has triangular factorization of the form
$$\left(\begin{matrix} 1&x^*(z)\\
0&1\end{matrix} \right)\left(\begin{matrix}\mathbf a_2&0\\
0&\mathbf a_2^{-1}\end{matrix} \right)\left(\begin{matrix} \alpha_2 (z)&\beta_2 (z)\\
\gamma_2 (z)&\delta_2 (z)\end{matrix} \right)$$
where $\mathbf a_2>0$, $x=\sum_{j=1}^{\infty}x_jz^j$, and $\gamma_2(z),\delta_2(z)\in W^{1/2}$.

\end{theorem}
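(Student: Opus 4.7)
The theorem characterizes the ``holomorphic Iwasawa factors'' of the $W^{1/2}$ loop group in three equivalent ways, and its proof is essentially a repackaging of the main results of the prequel \cite{Pi1}. Since the chain for $k_2$ is the mirror image of the chain for $k_1$ under conjugation by $\sigma = \bigl(\begin{smallmatrix} 0 & 1 \\ 1 & 0 \end{smallmatrix}\bigr)$, the plan is to prove the first chain in detail; the additional constraint $c(0) = 0$ in (II.1) is exactly what drops the $n = 0$ factor in the root-subgroup product for $k_2$.

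The implication (I.3)$\Rightarrow$(I.1) is an algebraic verification. Multiplying out the three factors shows that the first column of $k_1$ equals $(\mathbf a_1 \alpha_1,\, y^* \mathbf a_1 \alpha_1 + \gamma_1/\mathbf a_1)^T$, so $a := \mathbf a_1 \alpha_1 \in H^0(\Delta)$ with $a(0) = \mathbf a_1 > 0$. The pointwise $SU(2)$ identity on $S^1$ then determines the second column as $(-b^*, a^*)^T$ with $b := \mathbf a_1 \beta_1 \in H^0(\Delta)$, and $\det u \equiv 1$ on $\Delta$ rules out a common zero of $\alpha_1$ and $\beta_1$ (hence of $a$ and $b$).

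The analytic substance is in (I.1)$\Rightarrow$(I.3) and its link to (I.2). Given (I.1), Proposition \ref{opertopology}(b') places $A(k_1)$ in $U_{(\mathcal L_2)}$, so $A(k_1) A(k_1^{-1}) = 1 - B(k_1) B(k_1)^*$ is of determinant class. The explicit holomorphic form of $k_1$ allows one to compute $\det(A(k_1) A(k_1^{-1}))$ using the determinant identities of \cite{Pi1}, and the no-common-zeros hypothesis is precisely what forces this determinant to be nonzero; the same analysis applies to the shifted Toeplitz $A_1(k_1)$. By the main theorem of \cite{Pi1} the loop $k_1$ then admits a unique triangular factorization, which after matching normalizations takes the form (I.3) with $\mathbf a_1 = a(0)$, $y$ recovered from a scalar Wiener--Hopf problem solved via the inverse of $A(k_1)$, and $\alpha_1, \beta_1$ inheriting $W^{1/2}$ regularity from $k_1$. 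The equivalence (I.1)$\Leftrightarrow$(I.2) is the root-subgroup factorization statement of \cite{Pi1}: one extracts the $\eta_n$ recursively as normalized Taylor coefficients of the successive $a$-functions obtained by peeling off one basic factor at a time, and the $W^{1/2}$-convergence of the infinite product for $(\eta_i) \in \mathbf w^{1/2}$ is controlled by a Plancherel-type identity matching $|k_1|_{W^{1/2}}^2$ to $\sum k |\eta_k|^2$ up to a Toeplitz-determinant factor.

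The principal obstacle is precisely this Plancherel-type identity and the associated convergence argument, which form the technical heart of \cite{Pi1}; once they are in hand, the present theorem is a matter of carefully matching the holomorphic, triangular, and root-subgroup parametrizations.
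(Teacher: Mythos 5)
Your proposal is correct and follows essentially the same route as the paper's own proof sketch: verify the algebraic equivalences directly, use the Plancherel-type Toeplitz-determinant identities to pass from finite loops to the full $W^{1/2}$ class, and defer the technical core (the determinant identities, the recursive extraction of $\eta_n$ via equation (\ref{Taylorexp}), and the $W^{1/2}$-convergence argument via the positive definite function $\det(A(g)A(g^{-1}))$) to \cite{Pi1}. The one point the paper makes more explicit is that $W^{1/2}$-convergence of the infinite product is established by a representation-theoretic argument (continuity of the positive definite function associated to the basic representation of the Kac--Moody central extension), rather than by a direct norm comparison, but this is a refinement of detail, not a different approach.
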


\begin{remark}\label{PSUequivariance} There is a $PSU(1,1)$-equivariant Frechet space isomorphism
\begin{equation}H^0(\Delta)/\mathbb C \stackrel{\partial}{\rightarrow} H^1(\Delta):f_+\to\Theta :=\partial
f_+.\label{1.9}\end{equation} This representation is essentially unitary, where the norm of $f_+$ is the square root of $\int \partial f_+\wedge *\overline{\partial f_+}$. To say that $f_+(z)\in W^{1/2}(S^1)$ and is holomorphic in $\Delta$ is equivalent to saying that $\partial f_+\in H^1(\Delta)$ and square integrable (in the natural sense which we have just defined). This comment applies to the conditions we are imposing on $a_1,b_1,c_2,d_2,x,y$ in the statement of the theorem.

\end{remark}

\begin{proofidea} For $k_2\in L_{fin}SU(2) $, these correspondences are algebraic.
To be more precise, given a sequence $\zeta$ as in II.2 with a finite number of nonzero terms,
there are explicit polynomial expressions
for $x$, $\alpha_2$, $\beta_2$, $\gamma_2$ and $\delta_2$, and
\begin{equation}\label{l2identity}\mathbf a_2^2=\prod_{k>0} (1+|\zeta_k|^2) \end{equation}
Conversely, given $k_2$ as in II.1 or II.3, the sequence $\zeta$ can be recovered recursively from the Taylor expansion
\begin{equation}\label{Taylorexp}(c_2/d_2)(z)=(\gamma_2/\delta_2)(z)=(-\overline{\zeta}_1) z+(-\overline{\zeta}_2)(1+|\zeta_1|^2)z^2\end{equation}
$$+\left((-\overline{\zeta}_3)\prod_{j=1}^2(1+|\zeta_j|^2)+
(-\overline{\zeta}_2)(1+|\zeta_1|^2)(-\overline{\zeta}_2\zeta_1)\right)z^3$$
$$+\left((-\overline{\zeta}_4)\prod_{j=1}^3(1+|\zeta_j|^2)
+(-\overline{\zeta}_3)\prod_{j=1}^2(1+|\zeta_j|^2)(-\overline{\zeta}_3\zeta_2-2\overline{\zeta}_2\zeta_1)
+(-\overline{\zeta}_2)(1+|\zeta_1|^2)(\overline{\zeta}_2^2\zeta_1^2)\right)z^4$$
$+...$. The general form of this expansion is discussed in an appendix.

The fact that these algebraic correspondences continuously extend to analytic correspondences
depends on the following Plancherel-esque formulas (which explain the interest in root subgroup coordinates).
For $k_i$ as in Theorem \ref{introtheorem1},
\begin{equation}\label{Planch1}det(A(k_1)^*A(k_1))=det(1-C(k_1)^*C(k_1))
=det(1+\dot B(y)^*\dot B(y))^{-1}=\prod_{i\ge 1}(1+\vert \eta_i
\vert^2)^{-i}\end{equation} and
\begin{equation}\label{Planch2}det(A(k_2)^*A(k_2))=det(1-C(k_2)^*C(k_2)) =det(1+\dot
B(x)^*\dot B(x))^{-1}=\prod_{k\ge 1}(1+\vert \zeta_k\vert^2)^{-k}\end{equation}
where in the third
expressions, $x$ and $y$ are viewed as multiplication operators on
$H=L^2(S^1)$, with Hardy space polarization. In (\ref{Planch1}), the first two terms are nonzero iff
$k_1\in W^{1/2}$, the third is nonzero iff $y \in W^{1/2}$, and the third is nonzero iff $\eta\in \mathbf w^{1/2}$.

Finally we need to explain why the limits in I.2 and II.2 are $W^{1/2}$ limits, as opposed to simply pointwise (a.e.) limits. The basic fact is that $det(A(g)A(g^{-1}))=det(1-B(g)B(g)^*)$ is a continuous positive definite function on $W^{1/2}(S^1,SU(2))$ which determines the topology of this group. The positive definite
function associated to the vacuum vector $v_0$ for the so called basic representation of the Kac-Moody central extension of $ W^{1/2}(S^1,SU(2))$ is the section $det(A)$, viewed as a function on the central extension (see chapter 10 of \cite{PS}; this reference emphasizes that this is true for the universal central extension of
the Hilbert-Schmidt restricted group $U_{\mathcal L_2}$, but this implies our assertion because of Proposition \ref{opertopology}). For the basic representation tensored with its dual, the positive definite function associated to
the vector $v_0\otimes \overline{v_0}$ is the scalar function $det(A(g)A(g^{-1}))$.
The continuity of this function is equivalent to the strong operator continuity of the corresponding unitary representation, and this strong operator notion of convergence is equivalent to convergence in $W^{1/2}(S^1,SU(2))$
(In \cite{PS} this assertion is proven more universally for the Hilbert-Schmidt restricted group $U_{\mathcal L_2}$, and this implies our assertion because of Proposition \ref{opertopology}; see chapter 13 of \cite{Dixmier} for background on positive definite functions on groups).
\end{proofidea}

\begin{question}\label{W1/2projectivemap} Given $k_2$ as in the theorem, one obtains a based holomorphic map
$$ (\Delta,0) \to (\mathbb C\mathbb P^1, [0:1]): z \to [c_2(z):d_2(z)] $$
which has radial boundary values. How exactly does one describe the boundary conditions,
and how does one recover $k_2$ from this map? This question comes up later in the paper.
\end{question}

\begin{theorem}\label{introtheorem2} Suppose $g\in W^{1/2}(S^1,SU(2))$. The following are
equivalent:

(i) The (block) Toeplitz operator $A(g)$ and shifted Toeplitz operator $A_1(g)$
are invertible.

(ii) $g$ has a triangular factorization $g=lmau$.

(iii) $g$ has a (root subgroup) factorization of the form
$$g(z)=k_1^*(z)\left(\begin{matrix} e^{\chi(z)}&0\\
0&e^{-\chi(z)}\end{matrix}\right)k_2(z)$$ where $k_1$ and $k_2$
are as in Theorem \ref{introtheorem1} and $\chi \in W^{1/2}(S^1,i\mathbb R)$.

\end{theorem}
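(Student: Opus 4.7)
The plan is to prove the equivalences as a triangle (iii)$\Rightarrow$(ii)$\Rightarrow$(i)$\Rightarrow$(iii), with Theorem~\ref{introtheorem1} supplying the bridge between root subgroup and triangular data for the building blocks $k_{1}$ and $k_{2}$.

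For (iii)$\Rightarrow$(ii) I would substitute the triangular factorizations of $k_{1}^{*}$ and $k_{2}$ from Theorem~\ref{introtheorem1} (parts I.3 and II.3) into $g = k_{1}^{*}\operatorname{diag}(e^{\chi}, e^{-\chi}) k_{2}$, together with the scalar Wiener--Hopf factorization $e^{\chi} = e^{\chi_{-}} e^{\chi_{0}} e^{\chi_{+}}$. Because $\chi$ is purely imaginary with $\chi \in W^{1/2}$, the projections $\chi_{\pm}$ also lie in $W^{1/2}$; even though the exponentials $e^{\chi_{\pm}}$ need not be bounded, they have $L^{2}$ radial limits, which is all that the definition of triangular factorization requires. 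Collecting lower-triangular holomorphic pieces on the left, the diagonal in the middle, and upper-triangular holomorphic pieces on the right yields a triangular factorization $g = lmau$.

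For (ii)$\Rightarrow$(i): since $l \in H^{0}(\Delta^{*}, SL(2,\mathbb{C}))$, the Hankel block $B(l)$ in the matrix form~(\ref{multiplicationop}) vanishes, and similarly $C(u) = 0$ because $u \in H^{0}(\Delta, SL(2,\mathbb{C}))$; the constant factors $m, a$ preserve both $H_{+}$ and $H_{-}$. Multiplying out then gives $A(g) = A(l)\, A(m)\, A(a)\, A(u)$, a product of four invertible operators on $H_{+}$ (in particular $A(l)^{-1} = A(l^{-1})$, and similarly for $u$). The same bookkeeping for the shifted subspace $\{\epsilon_{i} z^{j}: i=1,2,\, j>0\} \cup \{\epsilon_{1}\}$ shows $A_{1}(g)$ is invertible, where one uses crucially that $ma$ is \emph{diagonal} rather than merely triangular, so that both compressions split in the same way.

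For (i)$\Rightarrow$(iii): this is the analytically substantive direction and the main obstacle. Invertibility of $A(g)$ alone yields a Birkhoff factorization $g = g_{-} g_{0} g_{+}$ with $g_{0} \in SL(2,\mathbb{C})$ arbitrary (the factors $g_{\pm}$ are recovered from $A(g)^{-1}\epsilon_{i}$ and its conjugate), and the extra invertibility of $A_{1}(g)$ forces the $(1,1)$ entry of $g_{0}$ to be nonzero --- precisely the condition for $g_{0}$ to admit a triangular factorization as a matrix --- so $g = lmau$ follows. To extract the root subgroup data I would use Theorem~\ref{introtheorem1} in reverse: read off $k_{2}$ from the ``upper-right'' polar data of $g$ via the Taylor recursion~(\ref{Taylorexp}), read off $k_{1}^{*}$ analogously, and let $\chi$ be determined by the diagonal middle piece. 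The hard part is upgrading the $L^{2}$ radial-limit regularity of the Birkhoff factors to the $W^{1/2}$ regularity of $\chi$ and the $\mathbf{w}^{1/2}$ regularity of the $\eta, \zeta$ coefficient sequences; this is where the Plancherel-type identities~(\ref{Planch1})--(\ref{Planch2}) are indispensable, since they factor $\det(A(g) A(g^{-1}))$ into separate determinantal contributions for $k_{1}$, $k_{2}$, and $\chi$, each of which detects exactly the correct regularity class.
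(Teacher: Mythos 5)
Your proposal follows essentially the same route as the paper's proof sketch: the equivalence (i)$\Leftrightarrow$(ii) is the standard Toeplitz/Riemann--Hilbert dictionary, the implication (iii)$\Rightarrow$(ii) is the explicit triangular factorization of Proposition~\ref{trifactorization}, and the recovery of root subgroup data together with the regularity upgrade is governed by Plancherel-type determinant identities. One correction: the identities that do the regularity bookkeeping for the composite $g=k_1^*e^{\chi}k_2$ are (\ref{Toeplitz0}), (\ref{Toeplitz1}) and (\ref{Toeplitzdiag}) --- which split $\det(A(g)^*A(g))$ and $\det(A_1(g)^*A_1(g))$ into separate $\eta$, $\chi$, $\zeta$ contributions, and whose ratio is precisely why $A_1(g)$ appears in (i) --- not (\ref{Planch1})--(\ref{Planch2}), which concern $k_1$ and $k_2$ in isolation and never involve $\chi$.
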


\begin{proofidea} The equivalence of (i) and (ii) is standard (see also (\ref{Toeplitzdiag}) below).

Suppose that $g\in L_{fin}SU(2)$. If $g$ has a root subgroup factorization as in (iii),
one can directly find the triangular factorization (see Proposition \ref{trifactorization} below), and from this explicit expression,
one can see how to recover the factors $\eta,\chi,\zeta$ (Incidentally, $\eta$ and $\zeta$ have finitely many nonzero terms, but this is not so for $\chi$, hence this calculation is not purely algebraic).

As was the case for Theorem \ref{introtheorem1}, the fact that these correspondences extend to analytic correspondences
depends on a number of Plancherel-esque identities.
For $g\in W^{1/2}(S^1,SU(2))$ satisfying the conditions in Theorem \ref{introtheorem2},

\begin{equation}\label{Toeplitz0}det(A(g)^*A(g))=
\left(\prod_{i=0}^{\infty}\frac{1}{(1+\vert\eta_i\vert^2)^{i}}\right)\times
\left(\prod_{
j=1}^{\infty}e^{-2j\vert\mathbf{\chi}_j\vert^2}\right)\times
\left(\prod_{k=1}^{\infty}\frac
{1}{(1+\vert\zeta_k\vert^2)^{k}}\right)\end{equation}

\begin{equation}\label{Toeplitz1}det(A_1(g)^*A_1(g))=
\left(\prod_{i=0}^{\infty}\frac{1}{(1+\vert\eta_i\vert^2)^{i+1}}\right)\times
\left(\prod_{
j=1}^{\infty}e^{-2j\vert\mathbf{\chi}_j\vert^2}\right)\times
\left(\prod_{k=1}^{\infty}\frac
{1}{(1+\vert\zeta_k\vert^2)^{k-1}}\right)\end{equation}
(where $A_1$ is the shifted Toeplitz operator)
\begin{equation}\label{Toeplitzdiag}a_0(g)^2=\frac{det(A_1(g)^*A_1(g)) }{det(A(g)^*A(g))}
=\left(\prod_{i=0}^{\infty}\frac{1}{(1+\vert\eta_i\vert^2)}\right)\times
\left(\prod_{k=1}^{\infty}(1+\vert\zeta_k\vert^2)\right)\end{equation}

\end{proofidea}

Note that because $g$ is unitary, i.e. $g^{-1}=g^*$ on $S^1$, parts (i) and (ii) are obviously inversion invariant, and this does not depend on the hypothesis that $g\in W^{1/2}$: if $g:S^1 \to SU(2)$ has the triangular factorization $g=lmau$, then $g^{-1}=g^*$ has triangular factorization $g^{-1}=u^* m^* a l^*$. On the other hand part (iii), the existence of a root subgroup factorization, is not obviously inversion invariant.

\begin{corollary}Suppose $g\in W^{1/2}(S^1,SU(2))$. Then $g$ has a root subgroup factorization (as in (iii)
of Theorem \ref{introtheorem2}) if and only if
$g^{-1}$ has a root subgroup factorization.
\end{corollary}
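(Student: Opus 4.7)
\begin{proofidea}
The strategy is to bounce the assertion off the trio of equivalences in Theorem \ref{introtheorem2}, using condition (i) as the intermediary, since that condition is manifestly invariant under $g\mapsto g^{-1}$ while condition (iii) is not.

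First I would note that $W^{1/2}(S^1,SU(2))$ is a topological group, so $g^{-1}\in W^{1/2}(S^1,SU(2))$ whenever $g$ is, and Theorem \ref{introtheorem2} is applicable to both. Next, since $g$ takes values in $SU(2)$, pointwise on $S^1$ we have $g^{-1}=g^{*}$, hence the bounded multiplication operator satisfies $M_{g^{-1}}=M_{g}^{*}$. Reading off the $(+,+)$ block of the $2\times 2$ decomposition (\ref{multiplicationop}) gives $A(g^{-1})=A(g)^{*}$, and the analogous identity for the shifted Toeplitz operator gives $A_{1}(g^{-1})=A_{1}(g)^{*}$ (the shifted subspace is preserved by $M_g^*$ up to a harmless rearrangement; this is the content of the parenthetical remark immediately preceding the corollary).

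Since invertibility of a bounded operator is equivalent to invertibility of its adjoint, condition (i) of Theorem \ref{introtheorem2} holds for $g$ if and only if it holds for $g^{-1}$. Applying the equivalence (i)$\Leftrightarrow$(iii) of that theorem to $g$ and then to $g^{-1}$ yields the corollary.

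The only step that is not completely automatic is checking that the shifted Toeplitz operator transforms the same way under $g\mapsto g^{-1}$ as the ordinary Toeplitz operator. This is essentially a bookkeeping exercise with the basis (\ref{basis}), but it is the place where one must be slightly careful. Everything else is a direct invocation of Theorem \ref{introtheorem2}, and I would not expect any real obstacle beyond this verification.
\end{proofidea}
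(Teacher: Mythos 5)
Your proposal is correct and matches the paper's own argument: both pivot through condition (i) of Theorem \ref{introtheorem2}, using $A(g^{-1})=A(g)^*$ and $A_1(g^{-1})=A_1(g)^*$ (a general fact about compressions to a subspace) together with the invariance of invertibility under adjoints, then invoke the equivalence (i)$\Leftrightarrow$(iii) in both directions. The paper also observes that condition (ii) is manifestly inversion invariant via $g^{-1}=u^*m^*al^*$, but the logical skeleton is the same as yours.
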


We have used the hypothesis that $g\in W^{1/2}$ so that we can use the identities (\ref{Toeplitz0}) and (\ref{Toeplitz1}) to prove that the existence of a root subgroup factorization implies invertibility of the Toeplitz determinants. A central question related to the generalizations in the following sections is whether the hypothesis $g\in W^{1/2}$ is crucial for inversion invariance of root subgroup factorization.

\subsection{Coordinates for $W^{1/2}(S^1,SU(2))$}\label{W1/2II}

Theorem \ref{introtheorem2} implies the following

\begin{corollary} $W^{1/2}(S^1,SU(2))$ is a topological Hilbert manifold
modeled on the root subgroup parameters $\{((\eta_i)_{i\ge 0},(\chi_j)_{j\ge 1},(\zeta_k)_{k\ge 1})\in l^2\times l^2 \times l^2\}\times \{e^{\chi_0}\in S^1\}$ for the open set of loops with invertible $A$ and $A_1$.
\end{corollary}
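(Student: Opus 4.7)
The plan is to show that the root subgroup parametrization $\Phi$ of Theorem \ref{introtheorem2}(iii),
$$ g \;\longmapsto\; \bigl((\eta_i)_{i\ge 0},\; e^{\chi_0},\; (\chi_j)_{j\ge 1},\; (\zeta_k)_{k\ge 1}\bigr), $$
is a homeomorphism from $\mathcal{U}:=\{g\in W^{1/2}(S^1,SU(2)) : A(g), A_1(g) \text{ invertible}\}$ onto the indicated product of $\ell^2$-type Hilbert spaces with $S^1$. Bijectivity is exactly Theorem \ref{introtheorem2}, up to the rescaling that identifies the weighted space $\mathbf{w}^{1/2}$ with $\ell^2$. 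Openness of $\mathcal{U}$ inside $W^{1/2}(S^1,SU(2))$ follows by the argument of Corollary \ref{VMOinvert_ops}, substituting Proposition \ref{opertopology}(b') for (c): the scalar functions $\det(A(g)A(g^{-1}))$ and $\det(A_1(g)A_1(g)^*)$ are continuous on $W^{1/2}(S^1,SU(2))$ and cut out $\mathcal{U}$ as the locus where both are strictly positive.

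The heart of the matter is bicontinuity of $\Phi$, for which the main inputs are the Plancherel identities (\ref{Toeplitz0}), (\ref{Toeplitz1}), (\ref{Toeplitzdiag}). In the direction parameters-to-loop, assume the parameters converge. Then by Theorem \ref{introtheorem1} (whose proof idea derives $W^{1/2}$-convergence from (\ref{Planch1}), (\ref{Planch2})), the partial root-subgroup products yield $k_1^{(n)}\to k_1$ and $k_2^{(n)}\to k_2$ in $W^{1/2}$; since $\chi^{(n)}\to\chi$ in $W^{1/2}(S^1,i\mathbb{R})$, the unimodular diagonal factor $\mathrm{diag}(e^{\chi},e^{-\chi})$ converges in $W^{1/2}\cap L^\infty$; pointwise multiplication on $W^{1/2}\cap L^\infty$ then assembles $g^{(n)}\to g$.

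In the direction loop-to-parameters, assume $g_n \to g$ in $W^{1/2}$ with the limit in $\mathcal{U}$. Standard continuity of the triangular factorization on the open locus where $A(g)$ is invertible yields convergence of the factors $l_n,m_n,a_n,u_n$ in their natural Hardy-type norms; feeding this into the algebraic recursions of the type (\ref{Taylorexp}) (and its analog on the $k_1$ side) one gets coefficient-wise convergence $\eta_i^{(n)}\to\eta_i$, $\chi_j^{(n)}\to\chi_j$, $\zeta_k^{(n)}\to\zeta_k$, while $e^{\chi_0^{(n)}}\to e^{\chi_0}$ is read off the leading diagonal data $m_n\to m$. These coefficient-wise statements must be upgraded to norm convergence in the weighted spaces, which is where the Plancherel identities are essential: (\ref{Toeplitz0}), (\ref{Toeplitz1}), (\ref{Toeplitzdiag}) exhibit the total weighted sums $\sum i|\eta_i|^2$, $\sum j|\chi_j|^2$, $\sum k|\zeta_k|^2$ as explicit continuous functions of the Toeplitz determinants of $g$, which converge by Proposition \ref{opertopology}(b'). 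Coefficient-wise convergence plus convergence of the total weight gives norm convergence.

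The main obstacle I anticipate is the clean disentangling of the triangular factorization of $g$ into the triangular data of $k_1^*$, $\mathrm{diag}(e^{\chi},e^{-\chi})$, and $k_2$ separately --- the middle factor in particular is not finitely parametrized, so the purely algebraic extraction in Theorem \ref{introtheorem1} has to be supplemented by a mechanism that peels off $\chi$ from $g$. The operator factorization $A(g) = A(k_1^* e^{\chi_-})\,A(e^{\chi_0})\,A(e^{\chi_+}k_2)$ of Remark \ref{factremark} is the technical lever here; in the $W^{1/2}$ setting all factors remain in the Hilbert-Schmidt restricted group, so the factorization is stable and continuous, and pins down $\chi_0$ via the one-dimensional middle factor and $\chi_\pm$ via the Toeplitz actions on a dense subspace.
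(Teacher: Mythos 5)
The paper offers no explicit proof of this corollary, treating it as an immediate consequence of Theorem \ref{introtheorem2} and of the continuity arguments sketched in the "Idea of the Proof" boxes. Your fleshing-out of the implied argument is essentially aligned with that approach (bijectivity from Theorem \ref{introtheorem2}, openness via the determinant, bicontinuity via the Plancherel identities), and the Scheffe-type upgrade from coefficient-wise convergence plus convergence of the weighted sums to $\mathbf w^{1/2}$-norm convergence does work. Two remarks on the details. First, the obstacle you flag at the end --- "peeling off $\chi$" --- is handled in the paper not by the operator factorization of Remark \ref{factremark} but by the explicit integral formulas appearing in the proof of Theorem \ref{L2theorem}: the expressions $\mathbf a_1=\exp(-\frac1{4\pi}\int\log(|l_{11}|^2+|l_{21}|^2)\,d\theta)$, $|u_{21}|^2+|u_{22}|^2=\mathbf a_2^2 e^{-2\mathrm{Re}\,\chi_+}$, $l_{11}=\alpha_1^* e^{\chi_-}$, etc.\ recover $\mathbf a_1,\mathbf a_2,\chi_{\pm}$ and then $k_1,k_2$ directly from the Birkhoff factors $l,u$, without needing a factorization of $A(g)$ itself. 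Second, your claim that (\ref{Toeplitz0}), (\ref{Toeplitz1}), (\ref{Toeplitzdiag}) "exhibit the total weighted sums $\sum i|\eta_i|^2$, $\sum j|\chi_j|^2$, $\sum k|\zeta_k|^2$ as explicit continuous functions of the Toeplitz determinants of $g$" overstates what those three identities give: the products they produce combine all three families with shifted exponents, and two independent determinants cannot isolate three (or five, counting the unit-weight sums) unknowns. The clean isolation of the individual weighted sums comes only after extracting $k_1$ and $k_2$ from $g$ and applying (\ref{Planch1}), (\ref{Planch2}) to each factor separately --- which is indeed the route you take via the triangular factorization, so the argument goes through; the citation of (\ref{Toeplitz0})--(\ref{Toeplitzdiag}) in that role is just imprecise. (Finally, the corollary's full manifold assertion also needs the standard remark that $\mathcal U$ is an open neighborhood of $1$ in a topological group, so translates $g_0\mathcal U$ furnish charts at every point; this is routine but should be said.)
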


As we noted in the introduction, it is not possible to use this (or any) coordinate to define a smooth structure which is translation invariant (because $W^{1/2}(S^1,su(2))$ is not a Lie algebra).

There are other coordinates, and this will be important when we consider VMO loops, because we will not be able to characterize VMO loops in terms of the coordinates $\eta$ and $\zeta$.

\begin{theorem}\label{xycoordinate} (a) The maps
$$\{k_1=\left(\begin{matrix}a_1&b_1\\-b_1^*&a_1^*\end{matrix}\right) \text{as in I.1-3 of Theorem \ref{introtheorem1} }\} \to \{y=\sum_{n=0}^{\infty}y_nz^n\in W^{1/2}(S^1)\}: k_1\ \to y $$
and
$$\{k_2=\left(\begin{matrix}d_2^*&-c_2^*\\c_2&d_2\end{matrix}\right) \text{as in II.1-3 of Theorem \ref{introtheorem1} }\} \to \{x=\sum_{n=1}^{\infty}x_nz^n\in W^{1/2}(S^1)\}: k_2\ \to x $$
are bijections.\\

(b)  $(y,\chi,x)$ is a topological coordinate system for the open set of loops in $W^{1/2}(S^1,SU(2))$ with invertible $A$ and $A_1$.
\end{theorem}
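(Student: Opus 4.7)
The plan is to reduce both parts to Theorem \ref{introtheorem1}, Theorem \ref{introtheorem2}, the preceding Corollary, and the Plancherel-esque identities \eqref{Planch1}--\eqref{Planch2}.

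For part (a), injectivity of $k_1 \mapsto y$ is immediate from the uniqueness of the triangular factorization: $y^*$ is a designated entry of the unique lower triangular factor in (I.3), and the statement for $k_2 \mapsto x$ is identical. For surjectivity, given $y \in W^{1/2}(S^1)$ of the form $y = \sum_{n \ge 0} y_n z^n$, I would produce $k_1$ via its root subgroup sequence. For a polynomial $y$ the correspondence $y \leftrightarrow \eta$ is an explicit algebraic inversion, recoverable top-down by comparing the triangular factorization of a finite root subgroup product with its (I.2) form. The Plancherel-esque identity \eqref{Planch1},
\[
\det\bigl(1 + \dot B(y)^* \dot B(y)\bigr) = \prod_{i \ge 1}(1 + |\eta_i|^2)^i,
\]
then extends this to the full space: finiteness of the left hand side when $y \in W^{1/2}$ forces the product on the right to converge, so $\eta \in \mathbf w^{1/2}$, and the implication (I.2) $\Rightarrow$ (I.1) of Theorem \ref{introtheorem1} produces the desired $k_1 \in W^{1/2}(S^1, SU(2))$. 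Uniqueness of the triangular factorization then confirms that $k_1$ has $y$ as its lower triangular coefficient. The analogous argument using \eqref{Planch2} and (II.2) $\Rightarrow$ (II.1) handles $k_2 \leftrightarrow x$.

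For part (b), the preceding Corollary already provides the coordinate system $g \leftrightarrow (\eta, \chi, \zeta)$, so it suffices to verify that the bijections $\eta \leftrightarrow y$ and $\zeta \leftrightarrow x$ from part (a) are homeomorphisms between the relevant Hilbert spaces. Continuity of $\eta \mapsto y$ holds because $\eta \mapsto k_1$ is continuous (part of the $(\eta, \chi, \zeta)$ coordinate statement) and $k_1 \mapsto y$ extracts a bounded piece of the triangular factorization, a continuous operation thanks to Proposition \ref{opertopology}(b'). In the reverse direction, \eqref{Planch1} together with \eqref{Toeplitz0} and \eqref{Toeplitzdiag} quantitatively bound the $\mathbf w^{1/2}$ norm of $\eta$ in terms of the $W^{1/2}$ norm of $y$, giving continuity of $y \mapsto \eta$; \eqref{Planch2} yields the parallel conclusion for $\zeta \leftrightarrow x$.

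The principal obstacle will be surjectivity in (a). The algebraic inversion $y \to \eta$ is straightforward on polynomial $y$ -- one reads off the top Fourier coefficients of $y$ to determine $\eta$ top-down -- but extending this bijection to arbitrary $y \in W^{1/2}$, producing an element of $\mathbf w^{1/2}$ rather than merely an abstract sequence, is precisely what the Plancherel identity \eqref{Planch1} accomplishes. This identity is the essential analytic bridge between the algebraic and analytic sides.
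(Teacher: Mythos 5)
The injectivity half of part (a) is fine, and your reading of (b) as a corollary of (a) is in the spirit of the paper. The problem is in your surjectivity argument.

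You claim that, for a polynomial $y$, the correspondence $y\leftrightarrow\eta$ is an explicit algebraic inversion ``recoverable top-down.'' This is not the case, and it is a crucial point. What is true (see the Appendix and (\ref{Taylorexp})) is that the $\zeta$ coordinates can be read off recursively from the Taylor coefficients of $\gamma_2/\delta_2$ (equivalently, $c_2/d_2$), since the $n$-th such coefficient depends only on $\zeta_1,\dots,\zeta_n$ with nonvanishing leading term. But the Fourier coefficient $x_n$ depends on all of $\zeta_n,\zeta_{n+1},\dots$ through an infinite sum (whose integer coefficients are not even explicitly known), so there is no finite recursion from $x$ (or $y$) to $\zeta$ (or $\eta$). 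In particular a finitely-supported $\eta$ does \emph{not} give polynomial $y$, and a polynomial $y$ does \emph{not} correspond to finitely-supported $\eta$, so the proposed ``algebraic inversion on polynomials, then extend by density'' strategy never gets off the ground. Furthermore, the subsequent use of (\ref{Planch1}) is circular: that identity is a theorem about an $\eta$ and a $y$ that are \emph{already} coordinates of the same existing loop $k_1$, so it cannot be used to manufacture $\eta$ (or to confirm $\eta\in\mathbf w^{1/2}$) before $k_1$ is in hand. The later discussion of the $L^2$ theory makes exactly this point: for generic $\zeta\in l^2$ the series defining $x$ need not even converge, so $\zeta\leftrightarrow x$ is not a bijection at the sequence level.

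The paper's proof avoids the $\eta$-coordinate entirely. Given $x\in L^2$ one forms the smallest closed $M_z$-invariant subspace $W\subset H$ containing $\left(\begin{smallmatrix}1\\0\end{smallmatrix}\right)$ and $\left(\begin{smallmatrix}x^*\\1\end{smallmatrix}\right)$, checks the two Grassmannian conditions $\bigcap_{k\ge0}z^kW=0$ and $\overline{\bigcup_{k\le0}z^{-k}W}=H$, and invokes Proposition (8.12.4) of \cite{PS} to obtain a measurable loop $k_2$ with $k_2H_+=W$; the triangular factorization then falls out of $k_2^{-1}\left(\begin{smallmatrix}1&x^*\\0&1\end{smallmatrix}\right)$ being holomorphic, with the determinant $\lambda$ forced to be $1$ by normalization. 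Only at the very end does one specialize to $x\in W^{1/2}$, using $\det(A(k_2)A(k_2^{-1}))=\det(1+\dot B(x)\dot B(x)^*)^{-1}>0$ to conclude $k_2\in W^{1/2}$. This operator-theoretic construction of $k_2$ from $x$ is precisely the content later made explicit in Theorem \ref{keyidentities1} and Lemma \ref{inverse}; it supplies what the algebraic inversion you posited would need to supply but cannot.
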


The change of coordinates from $\zeta$ to $x$ is discussed in an Appendix.

\begin{proof} In the first part of the proof, we will prove a more general result for measurable
loops, which we will exploit in the next section.

For part (a) we will use the Grassmannian model for the measurable loop group $Meas(S^1,U(2))$, see Proposition (8.12.4) of \cite{PS}, which describes the $Meas(S^1,U(2))$ orbit of $H_+$ in the Grassmannian
of $H=L^2(S^1,\mathbb C^2)$ (see (\ref{polarization})).
Given $x(z)=\sum_{n=1}^{\infty}x_nz^n\in L^2(S^1)$, let $W$ denote the smallest closed $M_z$-invariant subspace
containing the vectors $\left(\begin{matrix}1\\0\end{matrix}\right)$ and $\left(\begin{matrix}x^*\\1\end{matrix}\right)$. We claim that
\begin{equation}\label{grassconditions}\bigcap_{k\ge 0} z^kW=0 \text{  and  }  \bigcup_{k\le 0} z^{-k}W \text{ is dense in } H \end{equation}
For the first condition, suppose that $v$ is a point in the intersection. For each $N>0$ it is possible to
write $v(z)=\left(\begin{matrix}z^Nf_N(z)+z^Ng_N(z)x^*(z)\\z^Ng_N(z)\end{matrix}\right)$, where $f_N,g_N\in \dot H_+$
The second component of $v$ has to be identically zero. This implies $g_N$ has to be zero. Now the first component of
$v$ also has to vanish. The second condition is equivalent to showing that the subspace spanned by $\left(\begin{matrix}s(z)+t(z)x^*(z)\\t(z)\end{matrix}\right)$, where $s$ and $t$ are finite Fourier series, is dense
in $L^2(S^1)$. This is obvious.

This implies that $W$ is in the Grassmannian in Proposition (8.12.4) of \cite{PS}, and hence there exists
$k_2\in Meas(S^1,U(2))$ such that $k_2H_+=W$ ($k_2$ is obtained by taking an orthonormal basis for the two dimensional orthogonal complement of $zW$ inside $W$, a Gram-Schmidt type process). This implies that $k_2^{-1}W=H_+$, hence $k_2^{-1}\left(\begin{matrix}1&x^*\\0&1\end{matrix}\right)$ is holomorphic in the disk, and hence
$$k_2(z)=\lambda(z)\left(\begin{matrix}d_2^*(z)&-c_2^*(z)\\c_2(z)&d_2(z)\end{matrix}\right)= \left(\begin{matrix} 1&x^*(z)\\
0&1\end{matrix} \right)\left(\begin{matrix}\mathbf a_2&0\\
0&\mathbf a_2^{-1}\end{matrix} \right)\left(\begin{matrix} \alpha_2 (z)&\beta_2 (z)\\
\gamma_2 (z)&\delta_2 (z)\end{matrix} \right)$$
where $\mathbf a_2>0$, $\lambda^2=det(k_2):S^1 \to S^1$, $|c_2|^2+|d_2|^2=1$ on $S^1$.
From the second row of this equality, we see that $\lambda$ extends to a holomorphic function in $\Delta$.
$\lambda$ cannot vanish because $\gamma_2$ and $\delta_2$ cannot simultaneously vanish. Thus $\lambda$
is a constant; the normalizations in II.1-3 force $\lambda=1$.

We now consider the hypothesis in part (a) of the theorem, i.e. $x\in W^{1/2}$. This implies that
$$det(A(k_2)A(k_2^{-1}))=det(1-B(k_2)B(k_2)^*)=det(1+\dot B(x)\dot B(x)^*)^{-1}$$ is positive.
Therefore $k_2\in W^{1/2}$. The claim about $k_1$ and $y$ is similar.

Part (b) follows from (a).

\end{proof}

The preceding proof is abstract. In the next section (see Lemma \ref{inverse}) we will show how to solve for the
unitary loop corresponding to a given $x=\sum_{n=1}^{\infty}x_nz^n\in L^2(S^1)$.
Here we will simply state the result, which has a transparent meaning when $x\in W^{1/2}$.

\begin{theorem}\label{keyidentities1} Given $x=\sum_{n=1}^{\infty}x_nz^n\in W^{1/2}(S^1)$, the
corresponding loop $k_2\in W^{1/2}(S^1,SU(2))$ is determined by the identities
$$\mathbf a_2^2=\frac{1}{\langle 1|(1+\dot B(x)\dot B(x)^*)^{-1}|1\rangle}$$
$$\gamma_2^*=-\mathbf a_2^2(1+\dot B(x)^*\dot B(x))^{-1}(x^*) $$
and
$$\delta_2=\mathbf a_2^2(1+\dot B(x)\dot B(x)^*)^{-1}(1)  $$

\end{theorem}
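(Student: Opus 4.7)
The plan is to derive the formulas directly from the triangular factorization in II.3, by extracting two linear relations between $\gamma_2$ and $\delta_2$ and then inverting the resulting operator system. Set $\dot B := \dot B(x)$, the scalar Hankel operator $H_-\to H_+$, $f_-\mapsto P_+(xf_-)$; its adjoint is $\dot B^{*}g = P_-(x^{*}g)$ for $g\in H_+$. The crucial observation, exploited repeatedly, is that since $x$ has only strictly positive Fourier modes, $\dot B^{*}(1)=P_-(x^{*})=x^{*}$. Both $1+\dot B^{*}\dot B$ and $1+\dot B\dot B^{*}$ are strictly positive and therefore invertible with no further analytic input.

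First I would write $k_2 = l\,a\,u$ as in II.3 and read off the top row of the matrix equality on $S^1$, obtaining
\[ d_2^{*} = \mathbf{a}_2\alpha_2 + x^{*}c_2,\qquad -c_2^{*} = \mathbf{a}_2\beta_2 + x^{*}d_2.\]
Projecting each by $P_-$ kills the $H_+$-pieces $\mathbf{a}_2\alpha_2$ and $\mathbf{a}_2\beta_2$, and using $c_2=\mathbf{a}_2^{-1}\gamma_2$, $d_2=\mathbf{a}_2^{-1}\delta_2$, $d_2(0)=\mathbf{a}_2^{-1}$ produces the two fundamental relations
\[ \delta_2^{*}-1 = \dot B^{*}\gamma_2\qquad(*),\qquad \gamma_2^{*} = -\dot B^{*}\delta_2\qquad(**).\]

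Next I would take the complex conjugate of $(*)$ on $S^1$. Since $\gamma_2^{*}\in H_-$, one has $(\dot B^{*}\gamma_2)^{*}=P_{>0}(x\gamma_2^{*})=\dot B\gamma_2^{*}-(x\gamma_2^{*})_0$, where the subtraction of the constant mode is essential (because $\delta_2-1$ lies in the strictly-positive Hardy space while $\dot B\gamma_2^{*}$ generally includes a constant contribution). This gives $\delta_2=\lambda + \dot B\gamma_2^{*}$ with $\lambda:=1-(x\gamma_2^{*})_0 = 1-\langle\gamma_2^{*},x^{*}\rangle$. Feeding this into $(**)$ and invoking $\dot B^{*}(1)=x^{*}$ collapses the system to the single linear equation
\[ (1+\dot B^{*}\dot B)\gamma_2^{*} = -\lambda\,x^{*},\qquad \gamma_2^{*} = -\lambda(1+\dot B^{*}\dot B)^{-1}(x^{*}).\]

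To determine $\lambda$, I would substitute this formula back into $\lambda=1-\langle\gamma_2^{*},x^{*}\rangle$, obtaining a scalar linear equation. The push-through identity $\dot B(1+\dot B^{*}\dot B)^{-1} = (1+\dot B\dot B^{*})^{-1}\dot B$ combined with $(1+\dot B\dot B^{*})^{-1}\dot B\dot B^{*} = 1 - (1+\dot B\dot B^{*})^{-1}$ reduces this to $\lambda\langle(1+\dot B\dot B^{*})^{-1}(1),1\rangle = 1$, which is exactly the stated formula for $\mathbf{a}_2^2$. The formula for $\delta_2$ then follows from $\delta_2 = \mathbf{a}_2^2 + \dot B\gamma_2^{*}$ by one more application of push-through, and the normalization $\delta_2(0)=1$ is automatic. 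The main obstacle is the scrupulous tracking of the constant Fourier mode in $\dot B\gamma_2^{*} = P_+(x\gamma_2^{*})$: missing this correction would erase the scalar $\mathbf{a}_2^2$ from the final formulas. Existence of $k_2$ for $x\in W^{1/2}(S^1)$ is already supplied by Theorem~\ref{xycoordinate}, so this argument need only verify the explicit expressions rather than construct the loop.
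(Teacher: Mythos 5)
Your approach is genuinely different from the paper's: you extract the two relations $(*)$ and $(**)$ by projecting the top row of the matrix equality onto $H_-$ and then solve the resulting linear system by elementary Hankel-operator algebra, whereas the paper (Lemma~\ref{inverse}) derives the same formulas from the operator identity $(1+Z^*Z)^{-1}=A(g)A(g^{-1})$ with $Z=C(g)A(g)^{-1}$, which gives $(1+\dot B\dot B^*)^{-1}f=c_2(c_2^*f)_{0+}+d_2(d_2^*f)_{0+}$ in one stroke and then reads off the three identities by plugging in $f=1$ and $f=z^{-1}x$. Your route is more hands-on and self-contained, and the careful tracking of the constant mode in $\dot B\gamma_2^*=P_{0+}(x\gamma_2^*)$ is exactly right; the computations leading to $\gamma_2^*=-\lambda(1+\dot B^*\dot B)^{-1}(x^*)$, $\delta_2=\lambda(1+\dot B\dot B^*)^{-1}(1)$, and $\lambda\,\langle(1+\dot B\dot B^*)^{-1}(1),1\rangle=1$ all check.

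However there is a genuine gap at the final step. You introduce $\lambda:=1-\langle\gamma_2^*,x^*\rangle$ as an auxiliary scalar and show $\lambda=1/\langle(1+\dot B\dot B^*)^{-1}(1),1\rangle$, but then simply declare that this ``is exactly the stated formula for $\mathbf a_2^2$'' and thereafter write $\mathbf a_2^2$ in place of $\lambda$. Nothing in the argument up to that point identifies $\lambda$ with $\mathbf a_2^2$: the relations $(*)$ and $(**)$ come only from the $P_-$ projections of the top row, and these projections annihilate $\mathbf a_2\alpha_2$ and $\mathbf a_2\beta_2$, so $\mathbf a_2$ cancels out entirely --- the linear system determines the normalized $\gamma_2,\delta_2$ but says nothing about the diagonal scale. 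To close the gap you must invoke the information you discarded. One clean way: take the constant Fourier mode of $d_2^*=\mathbf a_2\alpha_2+x^*\mathbf a_2^{-1}\gamma_2$. Since $d_2(0)=\mathbf a_2^{-1}$ and $\alpha_2(0)=1$ this gives $\mathbf a_2^{-1}=\mathbf a_2+\mathbf a_2^{-1}(x^*\gamma_2)_0$, i.e.\ $\mathbf a_2^2=1-(x^*\gamma_2)_0=\overline\lambda$, and since $\lambda=1/\langle(1+\dot B\dot B^*)^{-1}(1),1\rangle>0$ is real you may conclude $\mathbf a_2^2=\lambda$. (Equivalently, integrate the unitarity relation $|\gamma_2|^2+|\delta_2|^2=\mathbf a_2^2$ over $S^1$ and use the push-through identity to compute $\|\gamma_2\|^2+\|\delta_2\|^2=\lambda^2\langle(1+\dot B\dot B^*)^{-1}(1),1\rangle=\lambda$.) With this one extra line your argument is complete and correct. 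As a smaller remark, your definition of $\dot B$ as a map $H_-\to H_+$ is inconsistent with your later use of $\dot B^*(1)$ and of $\dot B\gamma_2^*=P_{0+}(x\gamma_2^*)$; you are really using $\dot B:H_-\to \dot H^+$, $f_-\mapsto(xf_-)_{0+}$, which is the paper's convention.
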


\begin{remark} One can ask, can one dispense with root subgroup coordinates altogether and use $g_-$ as a
coordinate for the top stratum of $g\in W^{1/2}(S^1,SU(2))/SU(2)$? This is unknown to us
and discussed further in an appendix.\end{remark}

\subsection{Lower Strata}

In this subsection we will briefly outline how the theory above generalizes to lower strata, i.e.
non-generic loops. This involves a finite codimensional conditioning of what we have already done.

Let $N^+$ denote the group of (Lebesgue equivalence classes of) loops $u:S^1 \to SL(2,\mathbb C)$ which
are almost sure radial boundary values of maps $u$ as in \ref{factorization}, i.e. $u:\Delta \to SL(2,\mathbb C)$ is holomorphic and $u(0)$ is unipotent upper triangular. $N^-$ is similarly defined with $l(\infty)$ unipotent lower triangular. Suppose that
\begin{equation}\label{weylelt}w=\left(\begin{matrix}0&1\\-1&0\end{matrix}\right)^{\epsilon}
\left(\begin{matrix}z^n&0\\0&z^{-n}\end{matrix}\right) \end{equation} where $\epsilon=0$ or $1$. We say that $g\in L^1(S^1,SL(2,\mathbb C))$ belongs to the $w$ stratum if $g$ has a generalized triangular factorization of the form
\begin{equation}\label{genfactorization} g=lwmau,\end{equation}
where $l,m,a,u$ are as in \ref{factorization}. When $w\ne 1$, the $l$ and $u$ factors are not unique; to obtain uniqueness, these factors must be conditioned.

\begin{lemma}\label{genfactlemma} (a) Each $u\in N^+$ has a unique decomposition $u=u_-u_+$ where
$u_-\in N^{+}\cap wN^{-}w^{-1}$ and $u_+\in N^{+}\cap wN^{+}w^{-1}$ (or in the opposite order),
and there is a similar decomposition for $N^-$.

(b) $g$ belongs to the $w$ stratum iff $w^{-1}g$ or $gw^{-1}$ has a standard triangular factorization.
\end{lemma}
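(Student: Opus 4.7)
The plan is to prove part (a) by explicit matrix computation and then to use it to rearrange the factorization in part (b).

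For part (a), I would start by characterizing the two intersections in terms of matrix entries. For the case $\epsilon=0$, conjugation by $w=\mathrm{diag}(z^n,z^{-n})$ multiplies the $(1,2)$ and $(2,1)$ entries by $z^{\mp 2n}$. Combining the conditions that $u\in N^+$ (holomorphic in $\Delta$, upper-unipotent at $0$) with either $w^{-1}uw\in N^-$ or $w^{-1}uw\in N^+$ and tracking which Laurent modes survive, one finds
\[ N^+\cap wN^-w^{-1}=\left\{\begin{pmatrix}1&p(z)\\0&1\end{pmatrix}:\deg p\le 2n-1\right\} \]
and $N^+\cap wN^+w^{-1}=\{u\in N^+:u_{12}\text{ vanishes to order }\ge 2n\text{ at }0\}$. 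Given $u=\begin{pmatrix}a&b\\c&d\end{pmatrix}\in N^+$, the ansatz $u_-=\begin{pmatrix}1&p\\0&1\end{pmatrix}$ forces $(u_-^{-1}u)_{12}=b-pd$, so the condition that this lie in the second intersection reduces to the congruence $p\equiv bd^{-1}\pmod{z^{2n}}$. Since $d(0)=1$, the series $bd^{-1}$ is a well-defined element of the local ring $\mathbb{C}[[z]]$, and its truncation to degree $<2n$ uniquely determines $p$. The opposite-order factorization is handled by the symmetric calculation (using $a(0)=1$), and $\epsilon=1$ is handled by swapping the roles of $b$ and $c$. The claim for $N^-$ is obtained by the symmetry $z\leftrightarrow z^{-1}$ (i.e.\ the Cartan involution swapping $\Delta$ and $\Delta^*$).

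For part (b) ($\Rightarrow$), I would begin with $g=lwmau$ and apply the $N^-$ version of part (a) to write $l=l_-l_+$ with $l_-\in N^-\cap wN^-w^{-1}$ and $l_+\in N^-\cap wN^+w^{-1}$. Since $l_+\in wN^+w^{-1}$, there exists $\tilde l_+\in N^+$ with $l_+=w\tilde l_+w^{-1}$, hence $l_+w=w\tilde l_+$. Because $TA$ normalizes $N^+$ (and, in the case $\epsilon=0$, commutes with $w$), one may commute $\tilde l_+$ past $ma$ to obtain a new element $u'=(ma)^{-1}\tilde l_+(ma)\cdot u\in N^+$. The outcome is the refined factorization
\[ g = l_-\cdot w\cdot ma\cdot u',\qquad l_-\in N^-\cap wN^-w^{-1}. \]
Then $w^{-1}g=(w^{-1}l_-w)\cdot ma\cdot u'$ with $w^{-1}l_-w\in N^-$ (by the defining property of $l_-$), giving a standard triangular factorization. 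The parallel assertion that $gw^{-1}$ has a standard factorization is obtained by instead applying part (a) to $u$ in the form with $w$ replaced by $w^{-1}$, and absorbing the unwanted factor into $l$. For the case $\epsilon=1$, the only modification is that $wmaw^{-1}=m^{-1}a^{-1}$, which still lies in $TA$, so the same rearrangement goes through with $m,a$ replaced by $m^{-1},a^{-1}$ in the resulting standard factorization.

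For ($\Leftarrow$), the idea is to reverse the rearrangement. Given $w^{-1}g=l'\cdot ma\cdot u'$ in standard form, we write $g=wl'mau'$ and use part (a) applied to $w^{-1}$ to split $l'$ so that one factor passes cleanly through $w$ from the left; the other factor combines with $ma$ and $u'$ to give the factors $m,a,u$ of the generalized triangular factorization. The case when $gw^{-1}$ has a standard factorization is handled symmetrically by decomposing $u'$ instead.

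The main obstacle, in my view, is the bookkeeping for the $\epsilon=1$ case, where $w$ does not centralize $T$, so the commutation $w(ma)=(ma)w$ has to be replaced by $w(ma)=(m^{-1}a^{-1})w$ throughout; one must verify that the resulting factors still land in the prescribed classes of $L^1$-loops (in particular that holomorphicity in $\Delta$ resp.\ $\Delta^*$ is preserved under the conjugations and that the radial $L^2$-limit condition survives the multiplication). The algebra is straightforward once part (a) is in hand; what requires care is checking that every factor produced lies in the correct analytic subgroup.
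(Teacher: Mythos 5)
Your argument is correct and follows essentially the same route as the paper: characterize $N^+\cap wN^\pm w^{-1}$ by tracking which Laurent modes survive conjugation by $w$, solve the resulting truncated division problem (your $p\equiv bd^{-1}\pmod{z^{2n}}$ is the same congruence the paper solves, using $u_{11}(0)=1$ in the opposite-order decomposition $u=u_+u_-$), and then rearrange $g=lwmau$ by splitting $l$ and passing the $wN^+w^{-1}$-factor through $ma$. You are somewhat more explicit than the paper (which compresses the splitting of $l$ into ``we can choose $l\in N^-\cap wN^-w^{-1}$'' and handles only $n>0$, $\epsilon=0$), and you correctly flag the two points the paper leaves implicit: that the $\epsilon=1$ case requires $wmaw^{-1}=(ma)^{-1}$, and that one must check the resulting factors retain holomorphicity and the $L^2$ radial-limit property.
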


\begin{proof} (a) Suppose that $n>0$, $\epsilon=0$, and $u\in N^+$. We calculate
$$w^{-1}uw=\left(\begin{matrix}u_{11}&z^{-2n}u_{12}\\z^{2n}u_{12}&u_{22}\end{matrix}\right) $$
This is in $N^-$ iff $u_{11}=1$, $u_{12}$ is a polynomial of degree $<2n$, $u_{21}=0$, and $u_{22}=1$
Thus
$$N^+\cap wN^-w^{-1}=\{u=w^{-1}\left(\begin{matrix}1&x^{*(2n)} \\0 &1 \end{matrix}\right)w\} \text{ where } x^{*(2n)}=\sum_{k=1}^{2n}x_k^* z^{-k} $$
and
$$N^+\cap wN^+w^{-1}=\{u: (u_{12})_k=0, k<2n\} $$
Now note
$$\left(\begin{matrix}u_{11}&u_{12}\\u_{12}&u_{22}\end{matrix}\right)\left(\begin{matrix}1&-z^{-2n}x^{*(2n)} \\0 &1 \end{matrix}\right)= \left(\begin{matrix}u_{11}&-u_{11}z^{-2n}x^{*(2n)}+u_{12}\\u_{12}&-u_{21}z^{-2n}x^{*(2n)}+u_{22}\end{matrix}\right)$$
Since $u_{11}(0)=1$, we can uniquely solve for $x^{*(2n)}$ such that the 1,2 entry is $o(z^{2n})$. This implies the unique decomposition in (a). The other cases, $n<0, \epsilon=0$ and so on, are done in a similar way.

(b) Given $w$ and $g=lwmau$, by part (a) we can choose $l$ to be in $N^-\cap wN^-w^{-1}$. Then $g=wl'mau$, where $l'=w^{-1}lw\in N^-$, hence
$w^{-1}g$ has a standard triangular decomposition. The other assertions are similar.
\end{proof}

Note that in the case $n>0$ and $\epsilon=0$, the above proof shows that the subgroup
$N^+\cap wN^-w^{-1}$ can be parameterized by
$x_1,...,x_{2n}$ or (real analytically) by $\zeta_1,...,\zeta_{2n}$. To obtain uniqueness in (\ref{genfactorization}), we will set these parameters to zero and consider the complementary parameters.

Let $\Sigma_w$ denote the set of $g\in W^{1/2}(S^1,SU(2))$ which belong to the $w$ stratum. The following is
not new. In chapter 8 of \cite{PS} there is a geometric interpretation attached to $w$ triangular factorization
(Because we are considering triangular factorization as opposed to Riemann-Hilbert factorization, we should consider the flag space in section 8.7 of \cite{PS} rather than the Grassmannian model; the text refers to smooth loops, but the authors were well aware that the theory applies to $W^{1/2}$ loops, see page 84 of \cite{PS}).

\begin{proposition}\label{PSproposition} (a) $W^{1/2}(S^1,SU(2))$ is the disjoint union of the $\Sigma_w$, as $w$ varies.

(b) The factors $w,m$ and $a$ in (\ref{genfactorization}) are uniquely determined by $g\in \Sigma_w$.
If we require that $l\in N^-\cap wN^-w^{-1}$, then $l$ and $u$ are uniquely determined, and similarly
if we require that $u\in N^+\cap wN^+w^{-1}$, then $l$ and $u$ are uniquely determined.

\end{proposition}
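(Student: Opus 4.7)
The plan is to follow the Grassmannian-theoretic strategy of Chapter 8 of \cite{PS}, transported to the $W^{1/2}$ setting via Proposition \ref{opertopology}(b') (which embeds $W^{1/2}(S^1,SU(2))$ as a subgroup of the Hilbert-Schmidt restricted unitary group $U_{(\mathcal L_2)}$). Part (a) rests on the Birkhoff stratification of the associated restricted Grassmannian of $H$; part (b) is then an algebraic bookkeeping exercise combining Lemma \ref{genfactlemma} with the uniqueness of standard triangular factorization implicit in Theorem \ref{introtheorem2}.

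For part (a), I would first prove disjointness as a Fredholm-theoretic statement. Given $g=lwmau\in\Sigma_w$, the factor $l$ extends holomorphically to $\Delta^*$, so multiplication by $l$ is block-triangular with respect to the polarization (\ref{polarization}) with invertible diagonal blocks; $u$ is similarly triangular from the other side, and $ma$ is block-diagonal. Only $w$ actually shifts $H_\pm$. Consequently the kernel and cokernel dimensions of $A(g)$ and $A_1(g)$ coincide with those of $A(w)$ and $A_1(w)$, and these are easily computed from $(n,\epsilon)$ and distinguish different $w$. For existence, I would attach to $g$ the subspace $W:=gH_+$ in the Hilbert-Schmidt Grassmannian and invoke the Birkhoff theorem there: $W$ lies in the orbit of some basepoint $wH_+$ under the lower triangular subgroup, so $g=lwg_0u$ with $l\in N^-$, $g_0\in SL(2,\mathbb C)$ constant, and $u\in N^+$; a finite-dimensional Gauss step $g_0=m\cdot a$ on the constant factor then supplies the remaining diagonal factors.

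For part (b), uniqueness of $w$ is immediate from (a). Suppose $g=l_1wm_1a_1u_1=l_2wm_2a_2u_2$ with both $l_i\in N^-\cap wN^-w^{-1}$. Rearranging yields the identity $w^{-1}(l_2^{-1}l_1)w=(m_2a_2)(u_2u_1^{-1})(m_1a_1)^{-1}$. The left side lies in $N^-$ by the conditioning, while the right side lies in $(\text{constant diagonal})\cdot N^+$ (since $u_2u_1^{-1}\in N^+$ and conjugation by a constant diagonal preserves $N^+$). Comparing Laurent expansions at $\infty$ and at $0$ shows $N^-\cap(\text{diagonal})\cdot N^+=\{1\}$, which forces $l_1=l_2$, $u_1=u_2$, and $m_1a_1=m_2a_2$; uniqueness of the polar decomposition of the scalar $m_{i,0}a_{i,0}$ then separates $m_1=m_2$ and $a_1=a_2$. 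An arbitrary unconditioned generalized factorization can be brought to the conditioned form by invoking Lemma \ref{genfactlemma}(a) to decompose $l=l_-l_+$ and absorbing $l_+w=w(w^{-1}l_+w)$ into the $u$ factor (since $w^{-1}l_+w\in N^+$), an operation that preserves $w$, $m$, and $a$. The parallel assertion with $u$ conditioned to $N^+\cap wN^+w^{-1}$ is handled by the symmetric argument.

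The main obstacle is the existence half of (a): verifying the Birkhoff stratification directly for the $W^{1/2}$ group. Section 8 of \cite{PS} establishes it in the smooth category, and the authors remark (page 84 of loc. cit.) that the argument extends to $W^{1/2}$; the analytic point to check is that the $l$ and $u$ factors produced by the orbit analysis in the Hilbert-Schmidt Grassmannian automatically have $L^2$ radial boundary values, which should follow from the Hilbert-Schmidt estimates supplied by Proposition \ref{opertopology}(b').
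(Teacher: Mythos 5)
Your plan reproduces, at the level of strategy, the reference the paper relies on, but it picks the wrong homogeneous model from Chapter 8 of \cite{PS}, and that mischoice creates a genuine gap in the existence half of part (a). The paper itself flags this point: ``Because we are considering triangular factorization as opposed to Riemann-Hilbert factorization, we should consider the flag space in section 8.7 of \cite{PS} rather than the Grassmannian model.'' The Grassmannian parameterizes only Birkhoff (Riemann--Hilbert) types: the subspace $W=gH_+$ determines the translation part $n$ of $w$ up to sign, but is blind to the finite Weyl factor $\epsilon$ in $w=\left(\begin{smallmatrix}0&1\\-1&0\end{smallmatrix}\right)^{\epsilon}\mathrm{diag}(z^n,z^{-n})$, since a constant $\sigma\in SL(2,\mathbb C)$ merely permutes the basepoints $wH_+$ without changing the orbit. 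Your attempt to repair this with ``a finite-dimensional Gauss step $g_0=m\cdot a$'' is where the argument fails: $m\cdot a$ is diagonal, whereas the constant $g_0$ produced by the Grassmannian analysis is generically not; even the full Gauss factorization $g_0=l_0\,d\,u_0$ does not exist on the small Bruhat cell of $SL(2,\mathbb C)$, and it is precisely in that case that one must insert a finite Weyl element, producing the $\epsilon=1$ strata. As written your construction never reaches these. The flag space model (quotient by a Borel rather than a parabolic) encodes the full affine Weyl group and handles both $n$ and $\epsilon$ in one stroke; alternatively, one could keep the Grassmannian but add an explicit case split on the Bruhat type of $g_0$, writing $w_n\sigma=\sigma w_{-n}$ to match the paper's normalization of $w$.

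Two smaller remarks. Your disjointness argument via the kernel and cokernel of $A(g)$ and $A_1(g)$ hinges on a factorization $A(g)=A(l)A(wma)A(u)$ with $A(l),A(u)$ invertible; this is clean for bounded symbols, but in (\ref{factorization}) the factors $l,u$ are only required to have $L^2$ radial limits, so the multiplication operators $M_l,M_u$ need not be bounded and the factorization of $A(g)$ requires some justification (e.g.\ via densely defined operators and an approximation argument). Your proof of part (b) is essentially correct, including the reduction to the conditioned form via Lemma \ref{genfactlemma}(a); the only wrinkle is that after decomposing $l=l_-l_+$ one absorbs $(ma)^{-1}(w^{-1}l_+w)(ma)\in N^+$, not $w^{-1}l_+w$ itself, into $u$, but this is cosmetic.
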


Our main point is to show that one can use root subgroup factorization to parameterize these lower strata.
From an analytic perspective the results may seem ad hoc. In Lie theoretic terms, here is what is going on: each $k_i$ is a product of a certain collection of real root subgroup factors, and $\left(\begin{matrix}e^{\chi}&0\\0&e^{-\chi}\end{matrix}\right)$ corresponds to imaginary root factors. In turn $w$ has a factorization (which we have not made explicit) in terms of reflections corresponding to real roots. The factorization of $w$ determines which root subgroup factors have to be 'turned off' to parameterize the corresponding stratum.

\begin{theorem}\label{W12lowerstrata} Suppose $w$ is written as in \ref{genfactorization} and
$$g(z)=k_1(\eta)^*(z) w \left(\begin{matrix} e^{\chi(z)}&0\\
0&e^{-\chi(z)}\end{matrix}\right)k_2(\eta)(z)$$ where $k_1$ and $k_2$
are as in Theorem \ref{introtheorem1} and $\chi \in W^{1/2}(S^1,i\mathbb R)$.

(a) Suppose $n>0$ and $\epsilon=0$. If $\zeta_k=0$ for $k=1,...,2n$, or equivalently $x_k=0$
for $k=1,...,2n$, then
$g\in\Sigma_w$. Conversely $g\in\Sigma_w$ has a unique expression of this form.

(b) Suppose that $n<0$ and $\epsilon=0$. If $\eta_i=0$ for $i=0,...,2|n|-1$, then
$g\in\Sigma_w$. Conversely $g\in\Sigma_w$ has a unique expression of this form.

(c) Suppose $n>0$ and $\epsilon=1$. If $\zeta_k=0$ for $j=1,...,2n+1$, then
$g\in\Sigma_w$. Conversely $g\in\Sigma_w$ has a unique expression of this form.

(d) Suppose that $n\le 0$ and $\epsilon=1$. If $\eta_i=0$ for $i=0,...,2|n-1|$, then
$g\in\Sigma_w$. Conversely $g\in\Sigma_w$ has a unique expression of this form.
\end{theorem}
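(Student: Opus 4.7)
The plan is to reduce each case to Theorem \ref{introtheorem2} via Lemma \ref{genfactlemma}(b). By that lemma, $g\in\Sigma_w$ iff $gw^{-1}$ (equivalently $w^{-1}g$) has a standard triangular factorization, and by Theorem \ref{introtheorem2} this is equivalent to its being in the top stratum with a unique standard root subgroup factorization. So the theorem reduces to showing that the prescribed vanishing of initial root subgroup parameters is exactly what allows us to ``absorb'' the Weyl--torus element $w$ onto the opposite side, producing a top-stratum root subgroup factorization of $gw^{-1}$ (or $w^{-1}g$).

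The key computation is the behavior of a single root subgroup factor under conjugation by the torus element $\mathrm{diag}(z^n,z^{-n})$. For a $k_2$-type factor,
$$\mathrm{diag}(z^n,z^{-n})\,\mathbf a(\zeta_k)\begin{pmatrix}1 & \zeta_k z^{-k}\\ -\bar\zeta_k z^k & 1\end{pmatrix}\mathrm{diag}(z^{-n},z^n)=\mathbf a(\zeta_k)\begin{pmatrix}1 & \zeta_k z^{-k+2n}\\ -\bar\zeta_k z^{k-2n}& 1\end{pmatrix},$$
which is again a legitimate $k_2$-type factor precisely when $k-2n\ge 1$; analogously, a $k_1$-type factor shifts index by $-2n$. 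In case (a), $w$ commutes with $\mathrm{diag}(e^\chi,e^{-\chi})$, so $gw^{-1}=k_1(\eta)^*\mathrm{diag}(e^\chi,e^{-\chi})\bigl(w\,k_2(\zeta)\,w^{-1}\bigr)$. When $\zeta_1=\cdots=\zeta_{2n}=0$, the conjugated product $w\,k_2(\zeta)\,w^{-1}$ is termwise a standard $k_2(\zeta')$ with $\zeta'_j=\zeta_{j+2n}$, and $(\zeta'_j)\in\mathbf w^{1/2}$, so convergence of the limit in the $W^{1/2}$ sense is preserved. Thus $gw^{-1}$ lies in the top stratum, i.e.\ $g\in\Sigma_w$. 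Conversely, for $g\in\Sigma_w$ the unique top-stratum root subgroup factorization $gw^{-1}=k_1(\eta)^*\mathrm{diag}(e^\chi,e^{-\chi})k_2(\zeta')$ can be rewritten $g=k_1(\eta)^*w\,\mathrm{diag}(e^\chi,e^{-\chi})\bigl(w^{-1}k_2(\zeta')w\bigr)$, and $w^{-1}k_2(\zeta')w$ is a $k_2(\zeta)$ with $\zeta_j=0$ for $j\le 2n$ and $\zeta_{j+2n}=\zeta'_j$ for $j\ge 1$; uniqueness is inherited from uniqueness in the top stratum. Case (b) is handled by the same argument using $w^{-1}g$ and conjugation acting on the $k_1^*$-factor, with the index shift reversed since $n<0$, which dictates the bound $2|n|-1$.

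Cases (c) and (d) require an additional ingredient: the Weyl reflection $\sigma=\begin{pmatrix}0 & 1\\ -1 & 0\end{pmatrix}$ that appears in $w$ when $\epsilon=1$. A direct computation shows that $\sigma$-conjugation interchanges $k_1$- and $k_2$-type factors,
$$\sigma\begin{pmatrix}1 & \zeta z^{-k}\\ -\bar\zeta z^k & 1\end{pmatrix}\sigma^{-1}=\begin{pmatrix}1 & \bar\zeta z^k\\ -\zeta z^{-k} & 1\end{pmatrix},$$
and inverts the torus $\mathrm{diag}(e^\chi,e^{-\chi})$ to $\mathrm{diag}(e^{-\chi},e^\chi)$. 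Combining this reflection-induced swap with the torus-conjugation shift explains the revised index bounds $2n+1$ in case (c) and $2|n-1|$ in case (d), as well as the switch in which of $\zeta$ or $\eta$ carries the vanishing condition. The anticipated main obstacle is precisely the bookkeeping in these $\epsilon=1$ cases: one must carefully verify that after the combined reflection/torus conjugation the remaining product is still a standard root subgroup factorization in the sense of Theorem \ref{introtheorem1} (matching parity, sign, and normalization conventions, in particular $a(0)>0$ and $d_2(0)>0$), and that the residual scalar constants produced along the way are correctly absorbed into the $m$ and $a$ factors of the resulting triangular factorization of $gw^{-1}$ or $w^{-1}g$. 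Given the established existence and uniqueness for the top stratum, this is essentially an algebraic verification, but it is the place where the argument is most prone to sign and indexing errors.
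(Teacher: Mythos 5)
Your proof is correct and takes essentially the same approach as the paper: both reduce to showing that, under the stated vanishing conditions, the $w$-conjugate of $k_2$ (or $k_1^*$) is again a legitimate factor of that type, so that $gw^{-1}$ (or $w^{-1}g$) has a standard root subgroup/triangular factorization. The only difference is presentational — you verify this termwise at the level of individual root subgroup factors and appeal to Theorem \ref{introtheorem2}, while the paper conjugates the triangular factorization of $k_2$ directly and writes out the resulting $lmau$ explicitly (citing Proposition \ref{trifactorization}); note also that your remark about a ``switch in which of $\zeta$ or $\eta$ carries the vanishing condition'' for $\epsilon=1$ is misleading, since (c) still imposes conditions on $\zeta$ and (d) on $\eta$, with only the index bound changing.
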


\begin{proof} Consider part (a). We must show that $gw^{-1}$ has a standard triangular factorization.
Note that $w$ commutes with $\left(\begin{matrix} e^{\chi(z)}&0\\
0&e^{-\chi(z)}\end{matrix}\right)$.
$$wk_2w^{-1}= \left(\begin{matrix} 1&z^{2n}x^{*}\\
0&1\end{matrix} \right)\left(\begin{matrix}\mathbf a_2&0
\\0&\mathbf a_2^{-1}\end{matrix}\right)\left(\begin{matrix} \alpha_2&z^{2n}\beta_2\\
z^{-2n}\gamma_2&\delta_2\end{matrix} \right)$$
Because of the conditions on $\zeta$ (or $x$), on the right hand side the third matrix is in $N^+$. A calculation (see the proof of Proposition \ref{trifactorization} below for the case $w=1$, which is easily modified) now implies
$gw^{-1}(z)=k^*_1(z)\left(\begin{matrix} e^{\chi(z)}&0\\
0&e^{-\chi(z)}\end{matrix}\right)(wk_2(\eta)w^{-1})(z)$ has triangular factorization $gw^{-1}=lmau$ where
$$l=\left(\begin{matrix} \alpha_1^{*}&-\gamma_1^*\\
\beta_1^{*}&\delta_1\end{matrix}
\right)\left(\begin{matrix}e^{-\chi_+^*}&0
\\0&e^{\chi_+^*}\end{matrix}\right)
\left(\begin{matrix}1&M_-\\0&1\end{matrix}\right)
$$

$$\label{ma}m=\left(\begin{matrix}e^{\chi_0}&0\\0& e^{-\chi_0}\end{matrix}\right),
\quad a=\left(\begin{matrix}a_0&0\\0&a_0^{-1}\end{matrix}\right)=
\left(\begin{matrix}a_1a_2&0\\0&(a_1a_2)^{-1}\end{matrix}\right)$$

$$u=\left(\begin{matrix} 1&M_{0+}\\
0&1\end{matrix} \right)\left(\begin{matrix}e^{\chi_+}&0
\\0&e^{-\chi_+}\end{matrix}\right)\left(\begin{matrix} \alpha_2&z^{2n}\beta_2\\
z^{-2n}\gamma_2&\delta_2\end{matrix}\right) $$
and
$$M=(a_0m_0)^{-2}e^{2\chi_+^*}Y+e^{2\chi_+}z^{2n}X^{*}$$
$Y=\mathbf a_1^{2},X=\mathbf a_2^{-2}x$.

The other cases are similar.

\end{proof}

We have now shown that $W^{1/2}(S^1,SU(2))$ has a stratification which restricts to
the well-known stratification for smooth loops. As explained in the proof of (8.6.6) of \cite{PS},
this implies the following

\begin{corollary}\label{W12homotopy}The inclusions $L_{fin}SU(2) \to C^{\infty}(S^1,SU(2)) \to W^{1/2}(S^1,SU(2))$ are homotopy equivalences.
\end{corollary}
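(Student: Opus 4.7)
The plan is to exploit the Birkhoff stratification established in Theorem \ref{W12lowerstrata} and follow the template of the proof of Proposition (8.6.6) of \cite{PS}. The three groups $L_{fin}SU(2)$, $C^{\infty}(S^1,SU(2))$, and $W^{1/2}(S^1,SU(2))$ all carry a stratification indexed by the same set of elements $w$ of the form (\ref{weylelt}). The codimension of $\Sigma_w$ is $2|n|$ (with $+1$ when $\epsilon=1$), so it tends to infinity as $|n|\to\infty$.

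The first step is to show that each individual stratum is contractible in all three regularities, with the inclusions between them homotopy equivalences. By Theorem \ref{W12lowerstrata}, the root subgroup coordinates $(\eta,\chi,\zeta)$ give a homeomorphism from $\Sigma_w\subset W^{1/2}(S^1,SU(2))$ onto a product of $\ell^2$-type sequence spaces with the finite set of indices dictated by $w$ set equal to zero. The same coordinates parameterize $\Sigma_w\cap C^{\infty}$ (resp.\ $\Sigma_w\cap L_{fin}SU(2)$) by the rapidly decaying (resp.\ finitely supported) sequences inside these sequence spaces. Each such sequence space is an affine topological vector space, hence contractible, and the dense linear inclusions between them are homotopy equivalences.

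The second step is to filter by codimension. Set
\[
X_N^{W^{1/2}} = \bigcup_{\operatorname{codim}(\Sigma_w)\le N} \Sigma_w,
\]
and define $X_N^{C^{\infty}}$ and $X_N^{L_{fin}}$ analogously. Each $X_N$ is closed, and $X_N\setminus X_{N-1}$ is a disjoint union of strata of codimension exactly $N$. One argues inductively on $N$ that the inclusions $X_N^{L_{fin}}\hookrightarrow X_N^{C^{\infty}}\hookrightarrow X_N^{W^{1/2}}$ are homotopy equivalences: one attaches each new stratum to the previous filtration level via its normal-slice structure given by the root subgroup coordinates that get turned back on, and since the slice is contractible in all three regularities (by step one), attaching it preserves the homotopy equivalence.

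Finally, because $\operatorname{codim}(\Sigma_w)\to\infty$, the inclusion $X_N\hookrightarrow W^{1/2}(S^1,SU(2))$ is a $\pi_k$-isomorphism for $k<N$, and the analogous statement holds for $C^{\infty}$ and $L_{fin}$. Passing to the limit $N\to\infty$ yields weak homotopy equivalences for the two chain inclusions, and since all three groups are Polish topological groups with the local structure of a topological manifold (modeled on $\ell^2$ or a countable direct limit of finite-dimensional pieces), they are absolute neighborhood retracts with the homotopy type of CW complexes, so weak homotopy equivalences are genuine homotopy equivalences. The main obstacle will be the inductive attaching step: one must check that the root subgroup coordinates really provide a local product structure of $\Sigma_w$ with a transverse slice inside the larger group (a ``tubular neighborhood'' for the stratification), and that the topology inherited by $\Sigma_w$ from $W^{1/2}(S^1,SU(2))$ agrees with the topology coming from the coordinates $(\eta,\chi,\zeta)$ uniformly as one approaches the closure. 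This is the analytic content replacing the Grassmannian-model Schubert-cell picture used in \cite{PS}, and it is exactly where Theorem \ref{W12lowerstrata} and Theorem \ref{xycoordinate} are needed.
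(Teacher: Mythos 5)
Your proposal takes the same route as the paper: the paper's proof consists entirely of observing that Theorem \ref{W12lowerstrata} establishes the Birkhoff stratification in the $W^{1/2}$ setting and then citing the argument of Proposition (8.6.6) of \cite{PS}. Your writeup is a faithful unpacking of that cited argument --- contractible strata via root-subgroup coordinates, filtration by codimension with codimension tending to infinity, and the ANR upgrade from weak to genuine homotopy equivalence --- and correctly flags the normal-slice coherence as the point where the analytic content of Theorem \ref{W12lowerstrata} is actually used.
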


\section{The $L^2$ Theory}\label{L2}

We now ask whether there are $L^2$ analogues of Theorems \ref{introtheorem1} and \ref{introtheorem2}.
Here is a naive $L^2$ analogue of Theorem \ref{introtheorem1} (we consider just the second set of equivalences):

\begin{question}\label{introtheorem3}  Suppose that $k_2:S^1 \to SU(2)$ is Lebesgue measurable. Are the following equivalent:

(II.1) $k_2$ has the form
$$k_2(z)=\left(\begin{matrix} d_2^{*}(z)&-c_2^{*}(z)\\
c_2(z)&d_2(z)\end{matrix} \right),\quad z\in S^1,$$ where $c_2,d_2\in
H^0(\Delta)$ do not simultaneously vanish, $c_2(0)=0$ and $d_2(0)>0$.

(II.2) There exists a unique $(\zeta_k)\in l^2$ such that
$$k_2(z)=\lim_{n\to\infty}\mathbf a(\zeta_n)\left(\begin{matrix} 1&\zeta_nz^{-n}\\
-\bar{\zeta}_nz^n&1\end{matrix} \right)..\mathbf
a(\zeta_1)\left(\begin{matrix} 1&
\zeta_1z^{-1}\\
-\bar{\zeta}_1z&1\end{matrix} \right)$$ where the limit is understood in terms of convergence in measure.

(II.3) $k_2$ has triangular factorization of the form
$$\left(\begin{matrix} 1&\sum_{j=1}^{\infty}x^*_jz^{-j}\\
0&1\end{matrix} \right)\left(\begin{matrix}\mathbf a_2&0\\
0&\mathbf a_2^{-1}\end{matrix} \right)\left(\begin{matrix} \alpha_2 (z)&\beta_2 (z)\\
\gamma_2 (z)&\delta_2 (z)\end{matrix} \right)$$
where $\mathbf a_2>0$.

For $k_2$ satisfying these conditions, we will see that
$$\mathbf a_2^2=d_2(0)^{-2}=\prod_{k=1}^{\infty}(1+|\zeta_k|^2)=|\gamma_2|^2+|\delta_2|^2 \qquad(\text{on }S^1)$$
\begin{equation}\label{2ndformula}=1+\langle x|(1+ B(z^{-1}x)B(z^{-1}x)^*)^{-1}x\rangle_{L^2}=
\frac{1}{\langle1|(1+\dot B(x)\dot B(x)^*)^{-1}1\rangle_{L^2} }\end{equation}
(the meaning of the operators is explained in Lemma \ref{inverse})
and
$$|\alpha_2|^2+|\beta_2|^2=\mathbf a_2^{-2}(1+|x|^2) $$
on $S^1$.

\end{question}

In the first part of this section, our goal is to explain how the various implications have to be qualified.
One complication in this general context is the existence of singular inner functions (see page 370 of \cite{Rudin}).

\begin{example}\label{non-example}A simple non-example to bear in mind for (II.1) is
$$k_2(z)=\left(\begin{matrix} d_2^*(z)&0\\
0&d_2(z)\end{matrix} \right) \text{ where } d_2=\frac{z-t}{1-tz} $$
and $0<t<1$. This does not satisfy the hypothesis
that $c_2$ and $d_2$ are simultaneously nonvanishing, which is critical to show that the Toeplitz operator
$A(k_2)$ is injective.

A complex example for (II.1) is a $k_2$
where $c_2(z)=\sqrt{t_1}C_2(z)$, $d_2(z)=\sqrt{t_2}D_2(z)$, $C_2$ and $D_2$ are inner functions which do not simultaneously vanish in $\Delta$, and $t_1,t_2>0$, $t_1+t_2=1$.

\end{example}

It is obvious that (II.3) implies (II.1). The important point is that the triangular factorization implies that $c_2$ and $d_2$ do not simultaneously vanish in $\Delta$. For later use, notice that (II.3) and the special unitarity of $ k_2$ imply ("the unitarity equations")
\begin{equation}\label{1st2ndeqn1}\mathbf a_2\alpha_2+x^*\mathbf a_2^{-1}\gamma_2=\mathbf a_2^{-1}\delta_2^*, \quad
\mathbf a_2\beta_2+x^*\mathbf a_2^{-1}\delta_2=-\mathbf a_2^{-1}\gamma_2^*\end{equation}
and
\begin{equation}\label{3rdeqn1}\mathbf a_2^{-2}(\gamma_2^*\gamma_2+\delta_2^*\delta_2)=1\end{equation}
These equations imply
\begin{equation}\label{4theqn}\alpha_2=-\mathbf a_2^{-2}x^*\gamma_2+\mathbf a_2^{-2}\delta_2^*\quad
\text{and}\quad \beta_2=-\mathbf a_2^{-2}x^*\delta_2-\mathbf a_2^{-2}\gamma_2^*\end{equation}
Applying the $(\cdot)_{0+}$ projection to each of these,  we obtain
$\alpha_2=1-(X^*\gamma_2)_+$ and $\beta_2=-(X^*\delta_2)_{0+}$.
Using (\ref{4theqn}) again, on $S^1$
$$|\alpha_2|^2+|\beta_2|^2=\mathbf a_2^{-4}((-x^*\gamma_2+\delta_2^*)(-x\gamma_2^*+\delta_2)+(x^*\delta_2+\gamma_2^*)(x\delta_2^*+\gamma_2)) $$
Expand this and use the obvious cancelations. Together with (\ref{3rdeqn1}), this implies
\begin{equation}\label{normidentity}|\alpha_2|^2+|\beta_2|^2=\mathbf a_2^{-2}(1+|x|^2)\end{equation}
as claimed in the last part of Question \ref{introtheorem3}.

Now assume (II.1). We can determine $\zeta_1,\zeta_2,...$ using the Taylor series (\ref{Taylorexp}) for $c_2/d_2$ (note this is not identically zero, unlike the first loop in Example \ref{non-example}). Let
$$\left(\begin{matrix} d_2^{(n)*}(z)&-c_2^{(n)*}(z)\\
c_2^{(n)}(z)&d_2^{(n)}(z)\end{matrix} \right)=\mathbf a(\zeta_n)\left(\begin{matrix} 1&\zeta_nz^{-n}\\
-\bar{\zeta}_nz^n&1\end{matrix} \right)..\mathbf
a(\zeta_1)\left(\begin{matrix} 1&
\zeta_1z^{-1}\\
-\bar{\zeta}_1z&1\end{matrix} \right)$$ Because the polynomials $c_2^{(n)}(z)$ and $d_2^{(n)}(z)$
are bounded by $1$ in the disk, given any subsequence, there exists a subsequence for which this pair will converge
uniformly on compact subsets of $\Delta$. The limits, denoted
$\widetilde {c_2}(z)$ and $\widetilde{d_2}(z)$, are bounded by $1$, hence will have radial boundary values.
We will use the following elementary fact repeatedly.

\begin{lemma}\label{subseqlemma} Suppose that $f_n\in L^{\infty}H^0(\Delta)$ and $f_n$ converges uniformly on compact subsets to $f\in L^{\infty}H^0(\Delta)$. Then there exists a subsequence $f_{n_j}$ which converges pointwise
a.e. on $S^1$ to $f$.
\end{lemma}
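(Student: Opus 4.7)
The plan is to reduce the claim to the Riesz subsequence theorem: if $f_n^* \to f^*$ in measure on $S^1$, then a subsequence converges a.e. Since $L^\infty H^0(\Delta) = H^\infty(\Delta)$, Fatou's theorem supplies radial boundary values $f_n^*, f^* \in L^\infty(S^1)$, with each function recoverable from its boundary values via the Poisson integral. So the task is to promote uniform convergence on compacts of $\Delta$ to boundary convergence in measure on $S^1$.

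First I would extract a subsequence via a diagonal argument: pick radii $r_m \uparrow 1$ and, using uniform convergence on each compact circle $|z|=r_m$, choose indices $n_j$ such that $\sup_{|z|=r_m}|f_{n_j}(z)-f(z)| < 1/j$ for all $m \le j$. In the settings where this lemma is invoked, the $f_n$ come with a uniform $L^\infty$-bound (for instance the polynomials $c_2^{(n)}, d_2^{(n)}$ are bounded by $1$), which by the maximum principle transfers to a uniform $L^\infty$-bound on the boundary values. Combining the diagonal smallness on interior circles with a maximal-function / equi-integrability estimate coming from the $L^\infty$-bound, one concludes that $f_{n_j}^* \to f^*$ in measure on $S^1$.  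Riesz's theorem then provides a further subsequence converging a.e.\ on $S^1$.

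The main obstacle is the passage from interior smallness to boundary smallness: the difference $f_n - f$ is small on each circle $|z|=r_m$ but is only a priori bounded by a fixed constant in the thin annulus $r_m<|z|<1$ where the boundary values live.  Bridging this gap requires an equi-integrability estimate for the radial maximal functions of the differences; in the applications of this lemma, the additional structural identities (e.g.\ the unitarity $|c_2^{(n)}|^2+|d_2^{(n)}|^2=1$ on $S^1$) provide the necessary norm control, upgrading weak boundary convergence to strong $L^2$-convergence via the Radon--Riesz property and thus supplying the measure convergence that Riesz's theorem then converts into a.e. convergence along a subsequence.
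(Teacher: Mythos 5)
Your proposal takes a longer route --- through convergence in measure and Riesz's subsequence theorem --- than the paper, whose proof is a short diagonal argument: choose $n_j$ so that $\sup_{|z|=1-1/j}|f_{n_j}-f|<1/j$, and then assert directly that the radial boundary values of the $f_{n_j}$ converge to those of $f$ on the common full-measure set where all radial limits exist.

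The obstacle you explicitly name --- ``the passage from interior smallness to boundary smallness'' --- is exactly the step that requires justification, and neither your proposal nor the paper's proof actually supplies it. Write
$$f_{n_j}^*(e^{i\theta})-f^*(e^{i\theta})=\bigl[f_{n_j}^*(e^{i\theta})-f_{n_j}((1-\tfrac1j)e^{i\theta})\bigr]+\bigl[f_{n_j}-f\bigr]((1-\tfrac1j)e^{i\theta})+\bigl[f((1-\tfrac1j)e^{i\theta})-f^*(e^{i\theta})\bigr];$$
the middle term is $<1/j$ and the last tends to zero, but the first is uncontrolled because the rate at which $f_{n_j}$ approaches its own boundary value is not uniform in $j$. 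The bare statement is in fact false without extra hypotheses: for $f_n(z)=z^n\to 0$ uniformly on compacts, no subsequence of the boundary values $e^{in\theta}$ converges a.e.\ to $0$. Your patch via unitarity and Radon--Riesz does not close the gap either: from $\|c_2^{(n)}\|_{L^2}^2+\|d_2^{(n)}\|_{L^2}^2=1$ and weak convergence, lower semicontinuity of the norm only gives $\|c_2\|_{L^2}^2+\|d_2\|_{L^2}^2\le 1$, and the equality needed to run Radon--Riesz is essentially the $SU(2)$-valuedness of the boundary values of the limit --- which is precisely what the paper invokes this lemma to establish. To make such an argument honest you would have to prove the norm identity for the limit independently (for instance from the explicit combinatorial estimates on Taylor coefficients appearing in the surrounding proof), rather than derive it from the pointwise convergence you are trying to prove.
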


\begin{proof} Because each $f_j$ and $f$ are essentially bounded, each $f_j$ and $f$ has radial limits, on a common
subset $E$ of $S^1$ of full Lebesgue measure. For each
$j$ there exists $n_j$ such that $|f_{n_j}-f|<\frac 1j$ on $(1-\frac{1}{j})S^1$. The subsequence
$f_{n_j}$ then converges pointwise on $E$ to $f$.
\end{proof}

It follows that for some subsequence,
$$\widetilde{k_2}(\zeta)(z):=\lim_{j\to\infty}\left(\begin{matrix} d_2^{(n_j)*}(z)&-c_2^{(n_j)*}(z)\\
c_2^{(n_j)}(z)&d_2^{(n_j)}(z)\end{matrix} \right)
$$
exists in the pointwise Lebesgue a.e. sense on the circle.
Furthermore the sequence of zetas corresponding to $\widetilde{k_2}$ is $\zeta_1,...$. Therefore
using (\ref{Taylorexp}) $c_2/d_2= \widetilde{c_2}/\widetilde{d_2}$. Together with unitarity and the simultaneous nonvanishing
condition on $c_2,d_2$, this implies
$$\lambda:=\frac{\widetilde {c_2}}{c_2}=\frac{\widetilde {d_2}}{d_2} $$
is a holomorphic function in $\Delta$ with radial boundary values and $|\lambda|=1$ on $S^1$. Such a function has a unique factorization $\lambda=\lambda_b\lambda_s$, where $\lambda_b$ is a Blaschke product and $\lambda_s$ is a singular inner function, i.e.
\begin{equation}\label{Caratheodory}\lambda_s(z)=exp(\int_{S^1}\frac{z+e^{i\theta}}{z-e^{i\theta}}d\nu(\theta))\end{equation}
where $\nu$ is a finite positive measure which is singular with respect to Lebesgue measure (see page 370 of \cite{Rudin}). The integral, as a holomorphic function of $z$ is (up to a constant) usually referred to as the Caratheodory function of $\nu$; because $\nu$ is singular, the Caratheodory function is not $W^{1/2}$, hence is forced to vanish when $k_2$ is $W^{1/2}$ (or more generally VMO). The simultaneous nonvanishing condition
implies that $\lambda_b=1$. Since $\widetilde{d_2}(0),d_2(0)>0$, $\lambda(0)=1$, and $d_2(0)=\prod_{k> 0}\mathbf a(\zeta_k)=\prod_{k> 0}(1+|\zeta_k|^2)^{-1/2}>0$.
It follows
that $\zeta\in l^2$. This implies the following

\begin{theorem}\label{rootsubgpfact} Assume (II.1) in Question \ref{introtheorem3}. Then there exists a unique $(\zeta_k)\in l^2$
and a singular inner function $\lambda$ with $\lambda(0)=1$ such that
$$k_2(z)=\left(\begin{matrix} \lambda(z)&0\\
0&\lambda^{-1}(z)\end{matrix} \right)\lim_{n\to\infty}\mathbf a(\zeta_n)\left(\begin{matrix} 1&\zeta_nz^{-n}\\
-\bar{\zeta}_nz^n&1\end{matrix} \right)..\mathbf
a(\zeta_1)\left(\begin{matrix} 1&
\zeta_1z^{-1}\\
-\bar{\zeta}_1z&1\end{matrix} \right)$$ where the limit is understood in terms of convergence in measure.
\end{theorem}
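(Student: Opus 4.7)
The preamble to the statement already carries out most of the analytic work; what the theorem itself needs to consolidate is (i) uniqueness of the pair $(\zeta,\lambda)$, (ii) ruling out Blaschke zeros in $\lambda$ precisely, and (iii) upgrading the subsequential pointwise a.e.\ convergence produced in the preamble to convergence in measure of the full sequence of partial products.

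I would begin by reading off $\zeta_1,\zeta_2,\ldots$ from the Taylor series of $c_2/d_2$ via the recursion (\ref{Taylorexp}); this is legitimate because $d_2(0)>0$ by hypothesis, and the recursion determines the $\zeta_k$ uniquely from $k_2$, which already gives uniqueness of $\zeta$. The partial products $k_2^{(n)}$ are then well-defined $SU(2)$-valued maps with polynomial entries $c_2^{(n)},d_2^{(n)}$. By the normal-families argument combined with Lemma \ref{subseqlemma}, as in the preamble, every subsequence of $(k_2^{(n)})$ has a further subsequence converging uniformly on compact subsets of $\Delta$ and pointwise a.e.\ on $S^1$ to a limit $\widetilde{k_2}$ of form II.1. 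Applying the Taylor recursion to $\widetilde{c_2}/\widetilde{d_2}$ recovers the same $\zeta$, so $c_2\widetilde{d_2}=d_2\widetilde{c_2}$ on $\Delta$. The simultaneous non-vanishing hypothesis on $(c_2,d_2)$ then makes $\lambda:=\widetilde{c_2}/c_2=\widetilde{d_2}/d_2$ a bounded holomorphic function on $\Delta$, and the unitarity of $k_2$ and of each partial product on $S^1$ forces $|\lambda|=1$ a.e.\ on $S^1$; thus $\lambda$ is inner.

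Factor $\lambda=\lambda_b\lambda_s$. The key and most delicate step, and the one I expect to be the main obstacle, is to show $\lambda_b=1$. A zero of $\lambda$ at some $p\in\Delta$ would force $\widetilde{c_2}(p)=\widetilde{d_2}(p)=0$, and I would argue this is inconsistent with $\widetilde{k_2}$ being obtained as a compact-limit of $SU(2)$-valued partial products whose holomorphic entries are coprime at each finite stage: peeling off the common Blaschke factor at $p$ would produce a second representative of form II.1 with the same Taylor data but a strictly smaller value of $d_2(0)$, contradicting the partial-product formula $\widetilde{d_2}(0)=\prod_k\mathbf a(\zeta_k)$. Once $\lambda_b=1$ is established, the normalization $\lambda(0)=1$ reduces to $\widetilde{d_2}(0)=d_2(0)>0$, which in turn forces $\prod_k(1+|\zeta_k|^2)<\infty$, so $\zeta\in l^2$ as required.

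Uniqueness of $\lambda$ is then immediate, since $\lambda=\widetilde{d_2}/d_2$ once $\widetilde{k_2}$ is fixed. Finally, for convergence in measure of the full sequence $k_2^{(n)}\to\widetilde{k_2}$ on $S^1$, I invoke the standard metric-space fact that a sequence converges in measure if and only if every subsequence has a further subsequence converging a.e.\ to the same limit: by the compactness argument above, every subsequence does have such a further subsequence, and by the uniqueness of $\zeta$ and $\lambda$ just established, the limit is always the same $\widetilde{k_2}$. This completes the decomposition.
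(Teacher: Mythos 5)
Your outline matches the paper's pre-theorem discussion step for step: read off $\zeta$ from the Taylor expansion (\ref{Taylorexp}) of $c_2/d_2$, extract subsequential compact/a.e.\ limits $(\widetilde{c_2},\widetilde{d_2})$ via normal families and Lemma \ref{subseqlemma}, identify $\lambda=\widetilde{c_2}/c_2=\widetilde{d_2}/d_2$ as an inner function, try to kill the Blaschke part, deduce $\zeta\in l^2$ from $\widetilde{d_2}(0)=\prod_k\mathbf a(\zeta_k)>0$, and finally upgrade subsequential a.e.\ convergence to convergence in measure. The last upgrade is a genuine addition the paper does not spell out, and the device you use (the standard characterization of convergence in measure via subsequences) is the right one. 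So in structure this is the same proof. The gap is in the $\lambda_b=1$ step, which the paper merely asserts (``the simultaneous nonvanishing condition implies $\lambda_b=1$'') and which your attempted fill does not support.

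Three specific problems with your Blaschke argument. First, the arithmetic is backwards: a single Blaschke factor has $b_p(0)=|p|\in(0,1)$, so dividing $\widetilde d_2$ by it makes the constant term strictly \emph{larger}, $(\widetilde d_2/b_p)(0)=\widetilde d_2(0)/|p|>\widetilde d_2(0)$, not smaller. Second, even after fixing the direction there is no contradiction with the identity $\widetilde d_2(0)=\prod_k\mathbf a(\zeta_k)$: that identity describes the constant term of the \emph{actual} compact limit $\widetilde d_2$ of the partial products, and says nothing about the constant term of the modified pair $(\widetilde c_2/\lambda_b,\widetilde d_2/\lambda_b)$, which is not a limit of partial products at all. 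Third, and most fundamentally, Example \ref{non-example} (or $c_2=0$, $d_2$ a singular inner function) shows that a pair $(c_2,d_2)$ of form (II.1) is very far from being determined by the Taylor ratio $c_2/d_2$; one can insert essentially arbitrary inner factors. So ``a second representative of form II.1 with the same Taylor data'' cannot by itself produce a contradiction, and without a different idea (e.g.\ a quantitative use of the compactness of the partial products, or an inner-outer argument comparing $\widetilde d_2$ and $d_2$) you have not established $\lambda_b=1$. Relatedly, your passage to convergence in measure quotes ``uniqueness of $\lambda$'' to identify all subsequential limits, which is circular since $\lambda$ is defined from the subsequential limit; once $\zeta\in l^2$ is known the correct reference is Proposition \ref{prequellemma}, which proves uniqueness of the compact limit directly by the combinatorial bound on the Taylor coefficients of $\gamma_2^{(N)},\delta_2^{(N)}$.
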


\begin{question} Is the map from a $k_2$ as in (II.1) to a based holomorphic map
$\Delta \to \mathbb C\mathbb P^1$, with a radial extension to the boundary, surjective? Because of the existence of singular inner functions, it is far from injective.
\end{question}

Now assume $\zeta\in l^2$ as in (II.2). We will show that this implies (II.1), sans the simultaneous nonvanishing condition, and we will explain why we do not necessarily obtain a factorization as in (II.3).  Note we are free to use the unitarity equations for sufficiently regular $\zeta$, e.g. $\zeta\in \mathbf w^{1/2}$. In the course of the argument, we will also prove (\ref{2ndformula}), among other formulas.

We basically proved the following in \cite{Pi1}, but missed one elementary point at the very end of the argument.

\begin{proposition}\label{prequellemma}
Suppose that $\zeta=(\zeta_n)\in l^2$. Let
$$k_2^{(N)}=\left(\begin{matrix} d^{(N)*}&-c^{(N)*}\\
c^{(N)}&d^{(N)}\end{matrix} \right):=
\left(\prod_{n=1}^{N}\mathbf a(\zeta_n)\right)\left(\begin{matrix} 1&\zeta_Nz^{-N}\\
-\bar{\zeta}_Nz^N&1\end{matrix} \right)..\left(\begin{matrix} 1&\zeta_1z^{-1}\\
-\bar{\zeta}_1z&1\end{matrix} \right)$$ Then $c^{(N)}$ and
$d^{(N)}$ converge uniformly on compact subsets of $\Delta$ to
holomorphic functions $c=c(\zeta)$ and $d=d(\zeta)$, respectively,
as $N\to\infty$. The functions $c$ and $d$ have radial limits at
a.e. point of $S^1$, $c$ and $d$ are uniquely determined by these
radial limits,
$$k_2(z)=k_2(\zeta)(z):=\left(\begin{matrix} d(\zeta)^*(z)&-c(\zeta)^*(z)\\
c(\zeta)(z)&d(\zeta)(z)\end{matrix} \right)\in
Meas(S^1,SU(2,\mathbb C))$$
\end{proposition}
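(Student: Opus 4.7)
My approach is a three-stage normal-families argument modeled on orthogonal polynomials on the unit circle (OPUC).

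\textbf{Step 1 (Structure and normality).} Expanding the recursion $k_2^{(N+1)} = \mathbf{a}(\zeta_{N+1})\left(\begin{smallmatrix}1 & \zeta_{N+1}z^{-N-1}\\ -\bar\zeta_{N+1}z^{N+1} & 1\end{smallmatrix}\right) k_2^{(N)}$ gives, on $S^1$,
\begin{equation*}
c^{(N+1)} = \mathbf{a}(\zeta_{N+1})\bigl(c^{(N)} - \bar\zeta_{N+1}z^{N+1}d^{(N)*}\bigr), \quad d^{(N+1)} = \mathbf{a}(\zeta_{N+1})\bigl(d^{(N)} + \bar\zeta_{N+1}z^{N+1}c^{(N)*}\bigr).
\end{equation*}
Since $c^{(N)*}, d^{(N)*}$ have Fourier support in $\{-N,\ldots,0\}$, the $z^{N+1}$ shift lands in $\{1,\ldots,N+1\}$; inductively $c^{(N)}, d^{(N)}$ are polynomials of degree $\leq N$, hence holomorphic in $\Delta$. $SU(2)$-unitarity forces $|c^{(N)}|^2 + |d^{(N)}|^2 \equiv 1$ on $S^1$, and by the maximum modulus principle $|c^{(N)}|, |d^{(N)}| \leq 1$ on $\overline\Delta$; Montel's theorem then supplies normal families.

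\textbf{Step 2 (Identification of the limit).} On any subsequence converging locally uniformly to $(c,d)$, the Taylor formula \ref{Taylorexp} shows that the coefficient of $z^k$ in $c^{(N)}/d^{(N)}$ depends only on $\zeta_1,\ldots,\zeta_k$, so $c/d$ is a fixed meromorphic function of $\zeta$. Moreover $d^{(N)}(0) = \prod_{n=1}^N \mathbf{a}(\zeta_n) \to \prod_n \mathbf{a}(\zeta_n) > 0$ pins down $d(0)$. Using the classical OPUC fact that each $d^{(N)}$, being a Szego reverse polynomial, has no zeros in $\overline\Delta$ and so is outer, the locally uniform limit $d$ is outer with known value at the origin and boundary modulus $(1+|c/d|^2)^{-1/2}$; outer functions are rigid, so $d$ (and then $c = (c/d)\cdot d$) is unique. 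By normality the full sequence then converges uniformly on compacta.

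\textbf{Step 3 (Boundary unitarity).} Since $c,d \in H^\infty$, Fatou's theorem yields a.e. radial limits on $S^1$, and standard $H^\infty$-rigidity ensures $c,d$ are uniquely determined by these boundary values. The delicate assertion --- the ``elementary point missed in \cite{Pi1}'' --- is the a.e. equality $|c|^2+|d|^2 = 1$ on $S^1$, needed for $k_2(\zeta)$ to take values in $SU(2)$ rather than just in the unit ball. The inequality $\leq 1$ is immediate from subharmonicity of $|c^{(N)}|^2 + |d^{(N)}|^2$ and Fatou. For equality, one combines Jensen's formula $\log d(0) = \int_{S^1}\log|d|\,d\theta/(2\pi)$ (valid since $d$ is outer) with the explicit product formula $\log d(0) = -\tfrac12\sum\log(1+|\zeta_n|^2)$; a Szego-type comparison with the candidate boundary integrand $\tfrac12\log(1-|c|^2)$ forces equality a.e. \textbf{Main obstacle:} this last Szego identity is the principal difficulty, since it promotes purely interior convergence in $\Delta$ to a sharp almost-everywhere identity on $S^1$, and is precisely the step where the hypothesis $\sum|\zeta_n|^2<\infty$ is indispensable.
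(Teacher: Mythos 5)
Your Step 1 matches the paper. Steps 2 and 3 both have problems.

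\textbf{Step 2 has a concrete error.} The functions $d^{(N)}$ are not classical Szeg\H{o} reverse polynomials, and they need not be zero-free in $\Delta$. Your own recursion shows why: it shifts by $z^{N+1}$, not $z$. Compute directly: $c^{(1)} = -\mathbf{a}(\zeta_1)\bar\zeta_1 z$, $d^{(1)} = \mathbf{a}(\zeta_1)$, so $z^2 c^{(1)*}(z) = -\mathbf{a}(\zeta_1)\zeta_1 z$ and
\begin{equation*}
d^{(2)}(z) = \mathbf{a}(\zeta_1)\mathbf{a}(\zeta_2)\bigl(1-\bar\zeta_2\zeta_1 z\bigr),
\end{equation*}
which vanishes inside $\Delta$ whenever $\vert\zeta_1\zeta_2\vert>1$. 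Since $\zeta\in l^2$ places no smallness constraint on finitely many entries, $d^{(N)}$ can have zeros and so is not outer, and the outer-function rigidity you invoke to pin down the limit is unavailable. The paper establishes uniqueness of the subsequential limits by an entirely different mechanism: it writes out the Taylor coefficients $\gamma_{2,n}$, $\delta_{2,n}$ as explicit multilinear sums in $\zeta$, then shows each such sum is dominated by a partition estimate of the form $\sum_{\mathcal P(n)} \vert\zeta\vert_{l^2}^{2l}$, hence converges absolutely when $\zeta\in l^2$. Absolute convergence of the coefficient series forces every locally uniform subsequential limit to have the same Taylor expansion, which is the uniqueness you want. (Your observation that $c/d$ and $d(0)$ are determined by $\zeta$ is correct but insufficient: without outerness, knowing the ratio and the value at $0$ does not determine $d$.)

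\textbf{Step 3 overcomplicates a simple point, and you do not close it.} You flag the Szeg\H{o}--Jensen comparison as the ``principal difficulty'' and leave it open. The paper avoids it altogether: it proves the elementary Lemma \ref{subseqlemma} that a locally uniformly convergent sequence of bounded holomorphic functions on $\Delta$ has a subsequence converging pointwise a.e.\ on $S^1$. Applying this to $c^{(N)}$ and $d^{(N)}$ and passing to the limit in the pointwise identity $\vert c^{(N)}\vert^2+\vert d^{(N)}\vert^2 = 1$ on $S^1$ gives $\vert c\vert^2+\vert d\vert^2 = 1$ a.e.\ on $S^1$ directly. No Jensen formula, no outerness. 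Note also that the $l^2$ hypothesis is what makes the uniqueness in Step 2 work; the boundary unitarity in Step 3 is cheap once uniqueness is in hand.
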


Note that if $\zeta\in l^1$, then the product actually converges absolutely around the circle.
So one subtlety here is relaxing summability to square summability. Note also that the proof that (II.1) implies (II.2) shows that there exist convergence in measure limit points. So the second subtlety
is showing that there is a unique limit point.

\begin{proof} For any sequence $\zeta$ (not necessarily $l^2$), $c^{(N)}$ and $d^{(N)}$ will have subsequences
which converge uniformly on compact subsets of $\Delta$. We claim that if $\zeta\in l^2$, then these limits
are unique. The fact that $\zeta \in l^2$
implies that the product of the $\mathbf a(\zeta_k)$ converges. If
\begin{equation}\left(\begin{matrix}\alpha_2^*(z)&\beta_2^*(z)\\
\gamma_2(z)&\delta_2(z)\end{matrix}\right)=\lim_{n\to\infty}\left(\begin{matrix} 1&\zeta^-_nz^{-n}\\
\zeta^+_nz^n&1\end{matrix} \right)...\left(\begin{matrix} 1&
\zeta^-_1z^{-1}\\
\zeta^+_1z&1\end{matrix} \right),\end{equation}
then there are explicit formulas
$$\gamma_2(z)=\sum_{n=1}^{\infty}\gamma_{2,n}z^n,$$
$$\gamma_{2,n}=\sum (\zeta^+_{i_1})\zeta^-_{j_1}...(\zeta^+_{
i_r})\zeta^-_{j_r}(\zeta^+_{i_{r+1}}),$$ where the sum is over
multiindices satisfying
$$0<i_1<j_1<..<j_r<i_{r+1},\quad\sum i_{*}-\sum j_{*}=n,$$
and
$$\delta_2(z)=1+\sum_{n=1}^{\infty}\delta_{2,n}z^n,$$
$$\delta_{2,n}=\sum\zeta^-_{j_1}(\zeta^+_{i_1})...\zeta^-_{j_r}(
\zeta^+_{i_r}),$$ where the sum is over multiindices satisfying
$$0<j_1<i_1<..<i_r,\quad\sum (i_{*}-j_{*})=n$$
To see that these expressions
have well-defined limits consider
the formula for the $n$th coefficient of $\delta_2$, and let
$\mathcal P(n)$ denote the set of partitions of $n$ (i.e.
decreasing sequences $n_1\ge n_2 \ge ..\ge n_l>0$, where $\sum
n_j=n$ is the magnitude and $l=l({n_j})$ is the length of the
partition). Then
\begin{equation}\label{coefficient}\vert \delta_{2,n}\vert
\le\sum\vert \zeta_{i_1}\vert \vert \bar{\zeta}_{j_1}\vert
...\vert \zeta_{i_r}\vert \vert \bar{\zeta}_{j_r}\vert
,\end{equation} where the sum is over multiindices satisfying
$$0<i_1<j_1<..<j_r,\quad\sum (j_{*}-i_{*})=n.$$
If $n_k=j_k-i_k$, then $\sum n_k=n$, but this sequence is not
necessarily decreasing. However if we eliminate the constraints
$i_1<..<i_r$, then we can permute the indices ($1\le k\le r$) for
the $i_k$ and $n_k$. We can estimate that
(\ref{coefficient}) is
$$\le \sum_{(n_i)\in \mathcal P (n)} \sum_{i_1,..,i_l>0}
\vert\zeta_{i_1}\vert
\vert\zeta_{i_1+n_1}\vert..\vert\zeta_{i_l}\vert
\vert\zeta_{i_l+n_l}\vert= \sum_{(n_i)\in \mathcal P (n)}
\prod_{s=1}^l \sum_{i_s>0}\vert\zeta_{i_s}\vert
\vert\zeta_{i_s+n_s}\vert$$
$$\le\sum_{\mathcal P(n)}\vert \zeta \vert_{l^2}^{2l((n_i))}$$
This shows that the Taylor coefficients of any limiting function
for the $\delta)2^{(N)}$ are unique. The same considerations apply to the
$\gamma_2^{(N)}$. Thus the sequences $(\gamma_2^{(N)})$ and
$(\delta_2^{(N)})$ converge uniformly on compact sets of $\Delta$ to
unique limiting functions. This proves our claim about uniqueness
of the limits $c_2$ and $d_2$.

The fact that $k_2$ actually has
values in $SU(2)$ is a consequence of Lemma \ref{subseqlemma}.
This completes the proof of Lemma \ref{prequellemma}.
\end{proof}

We have now proven the existence of a
$$k_2(\zeta)=\left(\begin{matrix}d_2^*&-c_2^*\\c_2&d_2\end{matrix}\right)$$
as in (II.1), but we have not proven the simultaneous nonvanishing of $c_2$ and $d_2$.

We now want to investigate the existence of a triangular factorization
$$k_2=\left(\begin{matrix} 1&x^*\\
0&1\end{matrix} \right)\left(\begin{matrix}\mathbf a_2&0\\
0&\mathbf a_2^{-1}\end{matrix} \right)\left(\begin{matrix} \alpha_2 (z)&\beta_2 (z)\\
\gamma_2 (z)&\delta_2 (z)\end{matrix} \right)$$
where $\mathbf a_2>0$. Note we have explicit formulas for $\mathbf a_2,\gamma_2$ and $\delta_2$. But we need a formula for $x$. If we can find $x$, then we can use (\ref{4theqn}) to find $\alpha_2,\beta_2$. Because
of the identity (\ref{normidentity}) it would only remain to show $x$ is square integrable.

Recall from the appendix to \cite{Pi1} that $x^{*}$ has the form
$$x^{*}=\sum_{j=1}^{\infty}x_1^{*}(\zeta_j,..)z^{-j},$$
where
$$x_1^{*}(\zeta_1,..)=\sum_{n=1}^{\infty}\zeta_n\left(\prod_{k=n+1}^{\infty}
(1+\vert\zeta_k\vert^2)\right)s_n(\zeta_n,\zeta_{n+1},\bar{\zeta}_{n+1},
..),$$ $s_1=1$ and for $n>1$,
$$s_n=\sum_{r=1}^{n-1}s_{n,r},\quad s_{n,r}=\sum c_{i,j}\zeta_{i_1}\bar{\zeta}_{j_1}\zeta_{
i_2}\bar{\zeta}_{j_2}..\zeta_{i_r}\bar{\zeta}_{j_r}$$ where the
sum is over multiindices satisfying the constraints
\begin{equation}\label{index}\begin{matrix} &&j_1&\le&..&\le j_r&\\
&&\lor&&&\lor\\
n&\le&i_1&\le&..&i_r&\end{matrix}
,\quad\sum_{l=1}^r(j_l-i_l)=n-1,\end{equation}
The crucial point is that the $c_{i,j}$ are positive integers, although it is not known
how to explicitly compute them. In particular for each $n$ $s_n$ contains the subsum
$ \sum_{m\ge n}\zeta_m\zeta_{m+n-1}^*$.

Now suppose that all of the $\zeta_n\ge 0$. If the sum for $x_1^*$ converges, then the sum
$$\sum_{n=1}^{\infty}\zeta_n\sum_{m\ge n}\zeta_m\zeta_{m+n-1}^* $$
has to converge. But $\zeta\in l^2$ is not a sufficient condition to guarantee the convergence of
this sum. Empirically, if $\zeta_n=n^{-p}$ with $p<5/8$, the sum diverges. From a theoretical point of view,
this is the convolution of three functions on $\mathbb Z$ evaluated at zero, $\zeta^{t}*\zeta^{t}*\zeta$,
where $\zeta^{t}(-m)=\zeta(m)$ is the adjoint; the convolution of two $l^2(\mathbb Z)$ functions only has the property that
it vanishes at infinity, and the convolution of an $l^2(\mathbb Z)$ function and a function that vanishes at infinity
is not generally defined.
This explains why (II.2) in Question \ref{introtheorem3} does not imply (II.3).

This gap can possibly be (partially) filled by the following hybrid deterministic/probabilistic

\begin{conjecture}\label{intro3} In reference to Question \ref{introtheorem3}, if $\zeta\in l^2$ as in (II.2)
and the phases of the $\zeta_k$ are uniform and independent as random variables, then $k_2$ has a triangular
factorization as in (II.3).
\end{conjecture}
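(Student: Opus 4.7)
The plan reduces to showing that, under the random-phase hypothesis, the formal Fourier series for $x$ produced from the combinatorial expansion in the appendix of \cite{Pi1} converges in $L^2(S^1)$ almost surely. Once $x\in L^2$ is established, the formulas (\ref{4theqn}) define $\alpha_2$ and $\beta_2$, the identity (\ref{normidentity}) places them in $L^2$, and the $(\cdot)_{0+}$ projection (as computed in the text) shows $\alpha_2,\beta_2\in H^+$; together with the already-established formulas $\gamma_2=\mathbf{a}_2c_2$ and $\delta_2=\mathbf{a}_2 d_2$ from Proposition \ref{prequellemma}, this assembles the triangular factorization (II.3).

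The key device is Fourier orthogonality on the infinite torus. Write $\zeta_k=r_k\epsilon_k$ with $r_k=|\zeta_k|$ deterministic and $\epsilon_k=e^{i\theta_k}$ for iid uniform $\theta_k$; the characters $\{\prod_k\epsilon_k^{n_k}\}_{n\in\mathbb{Z}^{(\mathbb{N})}}$ are then mutually orthogonal. By the appendix expansion, each Fourier coefficient $x_j$ is an infinite sum of monomials $c_\sigma\prod_kr_k^{p_k(\sigma)+q_k(\sigma)}\prod_k\epsilon_k^{p_k(\sigma)-q_k(\sigma)}$ with positive integer coefficients $c_\sigma$. Expanding $|x_j|^2$ as a double sum over pairs $(\sigma,\sigma')$ and taking expectation, the only surviving contributions come from \emph{balanced} pairs satisfying $p_k(\sigma)-q_k(\sigma)=p_k(\sigma')-q_k(\sigma')$ for every $k$. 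The proposed estimate is: bound $\mathbb{E}|x_j|^2$ by the positive balanced-diagonal sum; apply Cauchy--Schwarz one factor at a time, $\sum_{i>0}r_ir_{i+n}\le\|\zeta\|_{l^2}^2$ (cf.\ the display just before (\ref{coefficient})), to bound each partition contribution to $j$; sum over $j$ to obtain $\sum_j\mathbb{E}|x_j|^2<\infty$; and conclude by Fubini that $x\in L^2$ almost surely.

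The main obstacle is the combinatorial bookkeeping in the balanced-diagonal bound, because the positive integer coefficients $c_\sigma$ are not explicitly known, so the estimate must be extracted from the index constraints (\ref{index}) alone. This is precisely the obstacle flagged at the end of the section---the deterministic heuristic $\sum_n\zeta_n\sum_{m\ge n}\zeta_m\zeta_{m+n-1}$, a convolution $\zeta^t*\zeta^t*\zeta$ evaluated at $0$, \emph{diverges} for $\zeta_k=k^{-p}$, $p<5/8$, so any proof must genuinely exploit the orthogonality. Random phases save the situation because the vast majority of such offending triple products have zero mean, but making this cancelation quantitative across all $j$, in the face of the unknown $c_\sigma$, is the real analytic content of the conjecture.
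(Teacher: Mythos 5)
You are proposing a proof of a statement that the paper itself explicitly presents as an \emph{open} conjecture: immediately after stating it, the authors write that ``To get started on this, we would need to prove the almost sure existence of $x_1$ above. This has not been done.'' So there is no paper proof to compare against. Your plan is consistent with the direction the authors indicate --- reduce to almost-sure $L^2$ convergence of the combinatorial series for $x$, then assemble the triangular factorization from the unitarity relations (\ref{4theqn}), (\ref{normidentity}) and the formulas of Proposition \ref{prequellemma} --- and it is a sensible strategy. But, as you yourself concede in the final paragraph, the estimate at the heart of it is not carried out, so this is a program rather than a proof.

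Two concrete points on the program. First, the ``balanced-diagonal'' reduction is not automatic: balanced pairs $(\sigma,\sigma')$ with $p_k(\sigma)-q_k(\sigma)=p_k(\sigma')-q_k(\sigma')$ for all $k$ need not have $\sigma=\sigma'$. To pass from the full balanced sum to a diagonal sum you would use AM--GM, $c_\sigma c_{\sigma'} R_\sigma R_{\sigma'}\le\frac12(c_\sigma^2 R_\sigma^2+c_{\sigma'}^2 R_{\sigma'}^2)$, but this reintroduces the multiplicity (the number of balanced partners of a given $\sigma$), which is itself controlled by the same unknown combinatorics as the $c_\sigma$; so the circularity you flag is a bit more severe than ``the $c_\sigma$ are unknown.'' Second, your target $\sum_j\mathbb{E}|x_j|^2<\infty$ is stronger than necessary (it yields $x\in L^2$ in $L^1(\Omega)$, hence a.s., but also an $L^1$-moment bound); the weaker goal of a.s. finiteness of $\sum_j|x_j|^2$ might be accessible via a martingale or Kolmogorov three-series argument exploiting the independence of the $\theta_k$, which avoids summing expectations across $j$. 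Either way, the key estimate remains to be proved, and that is exactly the gap the paper records.
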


To get started on this, we would need to prove the almost sure existence of $x_1$ above. This has not been
done. Instead we will explain the meaning of the operators in the statement of Question \ref{introtheorem3},
which should play an important role in the proof of the conjecture.

\begin{lemma}\label{inversever1} For sufficiently regular $x$ (which we will clarify in the proof)
$$\mathbf a_2^2=\frac{det(1+\dot B(x)\dot B(x)^*)}{det(1+\dot B(z^{-1}x)\dot B(z^{-1}x)^*)} $$
$$=1+\langle x|(1+\dot B(z^{-1}x)\dot B(z^{-1}x)^*)^{-1}x\rangle_{L^2}
=\frac{1}{\langle 1|(1+\dot B(x)\dot B(x)^*)^{-1}1\rangle_{L^2}}$$
($\langle\cdot\cdot\rangle$ is the $L^2$ inner product), where $\dot B(x)$ denotes the scalar Hankel operator
corresponding to the symbol $x$.
\end{lemma}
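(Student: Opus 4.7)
The plan is to derive a single operator identity for the bottom-right entry $d_2$ of $k_2$, namely
\[ (1 + \dot B(x)\dot B(x)^*)(d_2) = \mathbf{a}_2 \cdot 1 \]
as an element of $H_+$; from this all three formulas will follow. To establish it, I would start from the triangular factorization (II.3), $k_2 = l\,a\,u$, and project the top-row entries of the matrix equation onto $H_-$. Since each component of $u \in H^0(\Delta, SL(2,\mathbb{C}))$ lies in $H_+$, this yields the pair of relations
\[ d_2^* - \mathbf{a}_2^{-1} = \dot B(x)^*(c_2), \qquad c_2^* = -\dot B(x)^*(d_2). \]
The normalization $\alpha_2(0) = 1$ produces the zero-mode relation $(x^* c_2)_0 = \mathbf{a}_2^{-1} - \mathbf{a}_2$, and by complex conjugation $(x c_2^*)_0 = \mathbf{a}_2^{-1} - \mathbf{a}_2$. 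Then $\dot B(x)\dot B(x)^*(d_2) = \dot B(x)(-c_2^*) = -P_+(x c_2^*)$, while the complex conjugate of the first displayed relation reads $d_2 - \mathbf{a}_2^{-1} = P_+(x c_2^*) - (x c_2^*)_0$, so $P_+(x c_2^*) = d_2 - \mathbf{a}_2$. Substituting gives $(1 + \dot B(x)\dot B(x)^*)(d_2) = d_2 - (d_2 - \mathbf{a}_2) = \mathbf{a}_2 \cdot 1$.

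Given this key identity, the third formula is immediate: $d_2 = \mathbf{a}_2 (1 + \dot B(x)\dot B(x)^*)^{-1}(1)$, and taking the $L^2$ inner product with $1$ (equivalently, evaluating at $z = 0$) gives $\mathbf{a}_2^{-1} = d_2(0) = \mathbf{a}_2 \langle 1 | (1 + \dot B(x)\dot B(x)^*)^{-1} 1\rangle_{L^2}$. For the first formula, a direct matrix computation in the basis $\{z^m\}_{m \geq 0}$ of $H_+$ and $\{z^{-k}\}_{k \geq 1}$ of $H_-$ shows that
\[ \dot B(x)\dot B(x)^* - \dot B(z^{-1}x)\dot B(z^{-1}x)^* = |z^{-1}x\rangle\langle z^{-1}x| \]
is a rank-one operator on $H_+$, where $z^{-1}x = \sum_{m \geq 0} x_{m+1} z^m$ is viewed as an element of $H_+$. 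The Sylvester rank-one determinant formula then expresses the ratio $\det(1 + \dot B(x)\dot B(x)^*)/\det(1 + \dot B(z^{-1}x)\dot B(z^{-1}x)^*)$ as $1 + \langle z^{-1}x | (1 + \dot B(z^{-1}x)\dot B(z^{-1}x)^*)^{-1} z^{-1}x\rangle_{L^2}$; to identify this with $\mathbf{a}_2^2$, one combines the Plancherel identity (\ref{Planch2}) with the ratio formula (\ref{Toeplitzdiag}) and the analogous Plancherel-type identity for the shifted Toeplitz $A_1(k_2)$, whose proof mirrors that of (\ref{Planch2}). The second formula follows by combining these.

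The main obstacle is the derivation of the key operator equation for $d_2$: the relations extracted from the triangular factorization mix $c_2, d_2$ with their $*$-conjugates $c_2^*, d_2^*$, so the antilinear conjugation and its interaction with the projections $P_+, P_-$ must be tracked with care, and the normalization $\alpha_2(0) = 1$ is precisely what pins down the crucial zero-mode constant $(xc_2^*)_0$ appearing in the substitution. A secondary difficulty is establishing the shifted-Toeplitz Plancherel identity used in the third step, but this follows by essentially the same argument as (\ref{Planch2}). The phrase ``sufficiently regular'' should be understood to mean that $x$ lies in a function class (e.g.\ $W^{1/2}(S^1)$ or a Besov space $B_p^{1/p}$) for which $\dot B(x)$ is Hilbert--Schmidt or Schatten class, so that all the Fredholm determinants involved are well defined.
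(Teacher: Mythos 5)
Your proof is correct in substance, and for the third equality it takes a genuinely different (and arguably cleaner) route than the paper. The paper proves the third equality by observing that the matrix of $\dot B(z^{-1}x)\dot B(z^{-1}x)^*$ is the principal minor of the matrix of $\dot B(x)\dot B(x)^*$ obtained by deleting the zeroth row and column, and then invoking Cramer's rule for the $(0,0)$ entry of the inverse; this argument needs the Fredholm determinants to be well-defined (so $\zeta\in\mathbf w^{1/2}$). You instead derive the operator identity $(1+\dot B(x)\dot B(x)^*)(d_2)=\mathbf a_2\cdot 1$ directly from the triangular factorization and the normalization $\alpha_2(0)=1$, and read off the third formula from $d_2=\mathbf a_2(1+\dot B\dot B^*)^{-1}(1)$ at $z=0$. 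This is essentially a proof of Lemma~\ref{inverse}(b) of the paper (which the paper establishes only afterward, via the $Z$-operator machinery), and it has the advantage of being an honest operator identity needing no determinants; it is the more robust argument, which is why the paper eventually records it. For the first and second equalities your argument is essentially the paper's: the rank-one difference together with the Sylvester identity, and identification of $\mathbf a_2^2$ via the Szeg\H{o}--Widom-type Plancherel formulas (the paper just cites (2.13) of \cite{Pi1} for this). One small caution: your rank-one vector is $|z^{-1}x\rangle$ with $i,j$ entry $x_{i+1}\overline{x_{j+1}}$, whereas the paper's matrix convention (\ref{matrixformula}) yields the difference $x_i\overline{x_j}$ and hence the vector $|x\rangle$ (zero in the $z^0$ slot); these reflect a shift-by-one in the indexing of $\dot B(x)\dot B(x)^*$ that you should reconcile so your inner-product expression literally matches the stated $1+\langle x|(1+\dot B(z^{-1}x)\dot B(z^{-1}x)^*)^{-1}x\rangle_{L^2}$. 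The content is unaffected, but as written your second displayed formula would read $\langle z^{-1}x|\cdots z^{-1}x\rangle$ rather than $\langle x|\cdots x\rangle$.
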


\begin{proof} For the first equality see (2.13) of \cite{Pi1}. For the determinants in this formula to make sense,
we need $\zeta\in\mathbf w^{1/2}$.

As a matrix (relative to the standard Fourier basis)
$$\dot B(x)\dot B(x)^*-\dot B(z^{-1}x)\dot B(z^{-1}x)^*=(x_nx_m^*)_{n,m\ge 1} $$
because the $n,m$ entry is
$$\sum_{i\ge 0}(x_{n+i}x_{m+i}^*-\sum_{i\ge 0}(x_{n+1+i}x_{m+1+i}^*=x_nx_m^*$$
This is a rank one matrix.

The identity
$$(1+S)(1+T)^{-1}=1+(T-S)(1+T)^{-1}$$
implies
$$(1+\dot B(x)\dot B(x)^*)(1+\dot B(z^{-1}x)\dot B(z^{-1}x)^*)^{-1}$$
$$=1+\left(\dot B(x)\dot B(x)^*-\dot B(z^{-1}x)\dot B(z^{-1}x)^*\right) (1+\dot B(z^{-1}x)\dot B(z^{-1}x)^*)^{-1}$$
This is a rank one perturbation of the identity, and the determinant equals
$$1+\langle x|(1+\dot B(z^{-1}x)\dot B(z^{-1}x)^*)^{-1}x\rangle_{L^2}$$
This proves the second equality. This second formula has a transparent operator-theoretic meaning when the Hankel operator
is bounded, and this is the case if $x\in BMO$.

For the third equality, suppose that $x=\sum_{n\ge 1}x_nz^n\in L^2$. For $i,j\ge 0$, relative to
the standard Fourier basis $z^0,z^1,...$ for $\dot H^+$, the $i,j$ entry for the matrix
representing $\dot B(x)\dot B(x)^*$ equals
\begin{equation}\label{matrixformula}\sum_{n=0}^{\infty}x_{i+n}x_{j+n}^* \end{equation}
The matrix representing $\dot B(z^{-1}x)\dot B(z^{-1}x)^*$ (aside from indexing) is the same
as the matrix obtained by deleting the zeroth row and column of the matrix representing
$\dot B(x)\dot B(x)^*$. Thus the third equality is simply Cramer's rule for the inverse.
The use of this rule is valid provided $\zeta\in\mathbf w^{1/2}$, which guarantees the determinants make sense.
However as a formula for $\mathbf a_2$, it has a transparent operator-theoretic meaning when $x\in BMO$.
In the next lemma we will see the formula
makes sense for $(x_n)\in l^2$.

\end{proof}

We will now sharpen this result.

\begin{lemma}\label{inverse} Suppose that $\zeta\in l^2$.

(a) The sequence of positive operators $(1+\dot B(x^{(n)})\dot B(x^{(n)})^*)^{-1}$ has a unique
norm operator limit, and it is given by the formula
$$(1+\dot B\dot B^*)^{-1}f=c_2(c_2^*f)_{0+}+d_2(d_2^*f)_{0+} $$
[$x$ does not appear in the notation, to emphasize that we are not assuming the
existence of $x$]. Also
$$ A(k_2)A(k_2*)\left(\begin{matrix}f_1\\f_2\end{matrix}\right)
=\left(\begin{matrix}f_1\\(1+\dot B\dot B^*)^{-1}f_2\end{matrix}\right) $$

Similarly the sequence of positive operators $(1+\dot B(x^{(n)})^*\dot B(x^{(n)}))^{-1}$ has a
norm operator limit. This limit is unique and denoted by $(1+\dot B^*\dot B)^{-1}$.

(b)
$$(1+\dot B\dot B^*)^{-1}(z^n)=\mathbf a_2^{-2}(\gamma_2\sum_{j=0}^{n-1}\gamma_{2,n-j}^*z^j+\delta_2\sum_{k=0}^n\delta_{2,n-j}^*z^j$$
$$=\mathbf a_2^{-2}(z^n\gamma_{2}^{(n-1)*}\gamma_2+z^n\delta_{2}^{(n-1)*}\delta_2 $$
For example
$$(1+\dot B\dot B^*)^{-1}(1)=\mathbf a_2^{-2}\delta_2$$
$$(1+\dot B\dot B^*)^{-1}(z)=\mathbf a_2^{-2}(\gamma_{2,1}^*\gamma_2+(\delta_{2,1}^*+z)\delta_2)$$
$$(1+\dot B\dot B^*)^{-1}(z^2)=\mathbf a_2^{-2}((\gamma_{2,2}^*+\gamma_{2,1}^*z)\gamma_2+(\delta_{2,2}^*+\delta_{2,1}^*z+z^2)\delta_2)$$
and the diagonal is
$$\mathbf a_2^{-2}diag(1,1+|\gamma_{2,1}|^2+|\delta_{2,1}|^2,...,1+\sum_{k=1}^n(|\gamma_{2,k}|^2+|\delta_{2,k}|^2),...) $$

(c) If $x$ is $l^2$ and $n\ge -1$, then
$$(1+\dot B\dot B^*)^{-1}(z^nx)=-\gamma_2z^n\alpha_2^{(n)*}-\delta_2z^n\beta_2^{(n)*} $$
in particular
$$(1+\dot B\dot B^*)^{-1}(z^{-1}x)=-z^{-1}\gamma_2 $$
or equivalently
$$(1+\dot B^*\dot B)^{-1}x^*=-\mathbf a_2^{-2}\gamma_2^*$$

\end{lemma}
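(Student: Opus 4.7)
The plan is to first establish the formula in part (a) at the level of polynomial truncations $k_2^{(n)}$, where the $W^{1/2}$ theory of \cite{Pi1} applies and supplies an honest $x^{(n)}$, and then pass to the limit using Proposition \ref{prequellemma}; parts (b) and (c) will follow by direct substitution. For each $n$, $k_2^{(n)}\in L_{fin}SU(2)\subset W^{1/2}(S^1,SU(2))$ admits the triangular factorization of Theorem \ref{introtheorem1}.II.3, so $x^{(n)}$ is a polynomial. In the block decomposition $H_+ = \dot H_+\oplus\dot H_+$, two observations drive the calculation: the bottom row of $k_2^{(n)}$ is holomorphic, so the bottom row of $A(k_2^{(n)})$ is $(M_{c_2^{(n)}},M_{d_2^{(n)}})$; and the top row $(d_2^{(n)*},-c_2^{(n)*})$ has only non-positive Fourier modes, so the top block row of $B(k_2^{(n)})$ vanishes. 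The unitarity identity $A(k_2^{(n)})A(k_2^{(n)*}) = I - B(k_2^{(n)})B(k_2^{(n)})^*$ then produces a block-diagonal form $\bigl(\begin{smallmatrix}I & 0\\0 & S_n\end{smallmatrix}\bigr)$, and direct computation of the $(2,2)$-block from the bottom row of $A(k_2^{(n)})$ gives $S_n f = c_2^{(n)}(c_2^{(n)*}f)_{0+} + d_2^{(n)}(d_2^{(n)*}f)_{0+}$ (vanishing of the off-diagonal blocks reduces to the commutation of anti-holomorphic symbols in scalar Toeplitz operators). On the other hand, Lemma \ref{inversever1} together with its Schur-complement derivation identifies this same $(2,2)$-block with $(1+\dot B(x^{(n)})\dot B(x^{(n)})^*)^{-1}$.

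To pass to the limit, Proposition \ref{prequellemma} combined with Lemma \ref{subseqlemma} and uniqueness of the limit yields pointwise a.e.\ convergence $c_2^{(n)}\to c_2$ and $d_2^{(n)}\to d_2$ on $S^1$, dominated by $1$. Dominated convergence then gives, for each $f\in\dot H_+$, the $L^2$-convergence $c_2^{(n)}(c_2^{(n)*}f)_{0+} + d_2^{(n)}(d_2^{(n)*}f)_{0+}\to c_2(c_2^*f)_{0+} + d_2(d_2^*f)_{0+}$, i.e.\ strong operator convergence of the positive contractions $(1+\dot B(x^{(n)})\dot B(x^{(n)})^*)^{-1}$ to the operator $R$ defined by the right-hand side. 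The block-diagonal form of $A(k_2)A(k_2^*)$ and the analogous statement for $(1+\dot B^*\dot B)^{-1}$ (obtained from the dual identity $A(k_2)^*A(k_2) = I - C(k_2)^*C(k_2)$, where the bottom block row of $C(k_2)$ vanishes because $c_2,d_2$ are holomorphic) follow by the same argument. The main obstacle is this limit step: since we have only pointwise (not $L^\infty$) convergence of $c_2^{(n)}\to c_2$, the convergence of the inverse operators is only strong-operator rather than operator-norm, and the lemma's phrase \emph{norm operator limit} must be read as asserting existence and uniqueness of a bounded-operator limit.

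Parts (b) and (c) are direct substitutions into the formula from (a). For (b), substitute $f = z^n$; using $c_2 = \mathbf a_2^{-1}\gamma_2$, $d_2 = \mathbf a_2^{-1}\delta_2$ with the normalizations $\gamma_{2,0} = 0$ and $\delta_{2,0} = 1$, the $(0+)$-truncations of $c_2^*z^n$ and $d_2^*z^n$ give the stated expression, and the diagonal entries follow by pairing with $z^n$. For (c), substitute $f = z^n x$ and use the unitarity equations (\ref{1st2ndeqn1}) to rewrite $c_2^*x = d_2 - \mathbf a_2\alpha_2^*$ and $d_2^*x = -c_2 - \mathbf a_2\beta_2^*$; for $n\ge 0$, the $(0+)$-truncation yields $(c_2^*z^nx)_{0+} = z^n d_2 - \mathbf a_2 z^n\alpha_2^{(n)*}$ and similarly for the $d$-term, so that after multiplying by $c_2, d_2$ the $z^n c_2 d_2$ cross-terms cancel, leaving $-z^n(\gamma_2\alpha_2^{(n)*} + \delta_2\beta_2^{(n)*})$; the case $n = -1$ is a short boundary computation.
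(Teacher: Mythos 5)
Your derivation of the operator identity in (a) and the substitutions in (b) and (c) match the paper in substance. The paper packages the block computation a little differently — it invokes the facts $Z(g)=Z(g_-)$, $(1+Z^*Z)^{-1}=A(g)A(g^{-1})$ for unitary $g$, and the explicit form of $Z$ for $g_-=\bigl(\begin{smallmatrix}1&x^*\\0&1\end{smallmatrix}\bigr)$ — whereas you carry out the direct entrywise computation of $A(k_2)A(k_2^*)$ using the holomorphic/antiholomorphic structure of the rows of $k_2$ and the Pythagorean identity $|c_2|^2+|d_2|^2=1$. These are two routes to the same formula $Tf=c_2(c_2^*f)_{0+}+d_2(d_2^*f)_{0+}$, and both correctly isolate the $(2,2)$-block. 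Parts (b) and (c) are, in both treatments, routine substitutions of $z^n$ and $z^nx$ together with the unitarity equations \eqref{1st2ndeqn1}, and your reductions there are fine.

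The substantive point of divergence is the mode of convergence in (a). You correctly observe that the argument you give (dominated convergence from the a.e.\ bounded convergence $c_2^{(N)}\to c_2$, $d_2^{(N)}\to d_2$ supplied by Proposition \ref{prequellemma}) produces only strong operator convergence. The paper, however, genuinely claims convergence in operator norm and tries to justify it from the monotone increase to $1$ of the diagonal entries of $T$; your proposal sidesteps this by reinterpreting ``norm operator limit.'' This should instead be flagged as a real tension, because norm convergence appears to be too strong under the bare hypothesis $\zeta\in l^2$: each $1-T_N=P_N(1+P_N)^{-1}$ with $P_N=\dot B(x^{(N)})\dot B(x^{(N)})^*$ finite rank (the $x^{(N)}$ are rational, so Kronecker applies), so a norm limit would make $1-T$ compact, hence $B(k_2)B(k_2)^*=\operatorname{diag}(0,1-T)$ compact, hence $B(k_2)$ compact, hence (by Hartman) $k_2\in VMO$ — a conclusion that $\zeta\in l^2$ alone does not deliver. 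The paper's cited monotonicity of the diagonal of the limit operator $T$ does not by itself control $\sup_{\|f\|\le 1}\|(T_N-T)f\|$. So rather than quietly demote ``norm'' to ``bounded,'' you should state explicitly that only strong convergence (and uniqueness of the limit) follows from the argument, and that the norm-convergence assertion would require an additional VMO-type hypothesis on $k_2$; as written, the paper's final paragraph does not close this gap.

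One smaller point: you attribute the identification of the $(2,2)$-block with $(1+\dot B(x^{(n)})\dot B(x^{(n)})^*)^{-1}$ to ``Lemma \ref{inversever1} together with its Schur-complement derivation,'' but Lemma \ref{inversever1} only computes scalar quantities such as $\mathbf a_2^2$. The identification actually comes from the $Z$-formalism (the paper's facts (1)--(3)), or equivalently from the triangular factorization $k_2=\bigl(\begin{smallmatrix}1&x^*\\0&1\end{smallmatrix}\bigr)\cdot(\text{diag})\cdot(\text{holomorphic})$ together with $A(k_2)A(k_2^*)=(1+Z^*Z)^{-1}$ and $Z(k_2)=Z(g_-)$; you should cite that mechanism rather than Lemma \ref{inversever1}.
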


\begin{remark}Parts (b) and (c) explicitly determine $\gamma_2$ and $\delta_2$ in terms of $x$. This
explains the meaning of the formulas in Theorem \ref{keyidentities1}. \end{remark}

\begin{proof}
(a) Since $1+\dot B(x^{(n)})^*\dot B(x^{(n)})\ge 1$, it follows that the sequence $(1+\dot B(x^{(n)})^*\dot B(x^{(n)}))^{-1}$ has strong operator limits.  We must prove uniqueness. For this it will suffice to
prove the exact formula for $x\in L^2$, because using this formula we can take a limit to obtain the general formula.
After discussing the calculations in (c) and (d), we will then explain why this is actually a norm operator limit.

We need several standard facts: (1) If $g=g_-g_0g_+$, then $Z(g):=C(g)A(g)^{-1}=Z(g_-)$. (2) If $g$ is unitary,
then $(1+Z^*Z)^{-1}=A(g)A(g^{-1})$. And (3) If $g_-=\left(\begin{matrix}1&x^*\\0&1\end{matrix}\right)$, then
$$Z(g_-)\left(\begin{matrix}f_1\\f_2\end{matrix}\right)=\left(\begin{matrix}C(x^*)f_2\\0\end{matrix}\right)$$
It is straightforward to check (1). (2) follows from (1). And (3) is straightforward.

Now suppose that $g=k_2$ and $k_2$ has a triangular factorization. By (2)
$$(1+Z^*Z)^{-1}\left(\begin{matrix}f_1\\f_2\end{matrix}\right)=A(k_2)A(k_2*)\left(\begin{matrix}f_1\\f_2\end{matrix}\right)
=\left(\begin{matrix}f_1\\f_2-(c_2(c_2^*f_2)_{-})_{0+}-(d_2(d_2^*f_2)_{-})_{0+}\end{matrix}\right) $$
Now (3) implies
$$(1+\dot B\dot B^*)^{-1}=1-(B(c_2)B(c_2)^*+B(d_2)B(d_2)^*=A(c_2)A(c_2)^*+ A(d_2)A(d_2)^*$$
This formula does not depend on the assumption that $k_2$ has a triangular factorization (hence we can apply the formula to $k_2^{(n)}$ and take a limit). This formula is equivalent to the one in the statement of part (a) of the theorem.

The calculations in (b) are straightforward, given the formula in (a). The calculations in (c) also use
the unitarity equation $\mathbf a_2^2\alpha_2^*+\gamma_2^*=\delta_2$, multiplied by $z^n$. Together with the
formula in (a) this implies
$$(1+\dot B\dot B^*)^{-1}(z^nx)=\mathbf a_2^{-2}(\gamma_2(z^n\delta_2-\mathbf a_2^2z^n\alpha_2^{(n)*})-\delta_2(z^n\gamma_2+\mathbf a_2^2z^n\beta_2^{(n)*}) $$
This simplifies to the formula in (c).

Finally we explain why the limits in (a) are actually norm limits. Note that $(1+\dot B\dot B^*)^{-1}\le 1$
as positive operators. The formula for the diagonal in part (b) shows that the diagonal entries monotonely
increase to $1$ as $n\to \infty$. This implies uniform convergence.

\end{proof}

\begin{question} If $\zeta\in l^2$, then $0\le (1+\dot B\dot B^*)^{-1}\le 1$. Is $(1+\dot B\dot B^*)^{-1}$ injective? What can we say about the spectrum of  $(1+\dot B\dot B^*)^{-1}$? If $x\in VMO$, then the spectrum is discrete. Does the spectrum simply become continuous on $[0,1]$ outside of VMO?

\end{question}

Here is a naive $L^2$ analogue of Theorem \ref{introtheorem2}.

\begin{question}\label{introtheorem4}  Suppose that $g:S^1 \to SU(2)$ is measurable. Are the following conditions equivalent:

(i) $A(g)$ and $A_1(g)$ are invertible.

(ii) $g$ has a triangular factorization.

(iii) $g$ {\bf and} $g^{-1}$ have (root subgroup) factorizations of the form
$$g= k_1(\eta)^* \left(\begin{matrix}e^{\chi}&0\\0&e^{-\chi}\end{matrix}\right)k_2(\zeta)$$
$$g^{-1}= k_1(\eta')^* \left(\begin{matrix}e^{\chi'}&0\\0&e^{-\chi'}\end{matrix}\right)k_2(\zeta')$$
where $k_1$ and $k_2$ are as in (some form of) Question \ref{introtheorem3}, and
$exp(-\chi_+), exp(-\chi_+')\in L^2$.

\end{question}

\begin{remark} The conditions (i) and (ii) are invariant with respect
interchange of $g$ and $g^{-1}$ (This depends on $A(g^{-1})=A(g^*)=A(g)^*$ (and similarly for $A_1$),
and $g^{-1}=u(g)^*m(g)^*a(g)l(g)^*$. It is for this reason that we have imposed a condition on
both $g$ and its inverse in part (iii). This was not necessary in
the $W^{1/2}$ case.

\end{remark}

In the remainder of the section, we will explain how these statements have to be modified.

First, it is known that (i) is equivalent to \\

(ii') $g$ has a triangular factorization, $g=lmau$, {\bf and} the operators
$$R:\mathbb C[z]\otimes \mathbb C^2 \to \mathbb C[z]\otimes \mathbb C^2:\psi_+ \to M_{u^{-1}}\circ P_{+}\circ M_{l^{-1}}(\psi_+)$$
(where $P_+$ is either the projection for the polarization (\ref{polarization}) or the shifted polarization)
extend to bounded operators. \\

This is a special case of Theorem 5.1 (page 109) of \cite{Peller}, which establishes a criterion for invertibility of $A(g)$ for more general essentially bounded matrix symbols.

\begin{theorem} If $k_1,k_1',k_2$ and $k_2'$ have triangular factorizations (as in (I.3) and (II.3) of Question \ref{introtheorem3}, then $g$ has a triangular factorization (as in (ii) of Question \ref{introtheorem4})
\end{theorem}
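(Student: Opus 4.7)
The plan is to construct the triangular factorization $g = l m a u$ explicitly from the root subgroup factorization $g = k_1^* D_\chi k_2$, following the template already carried out in the proof of Theorem \ref{W12lowerstrata} for the case $w = 1$. Set $D_\chi := \text{diag}(e^{\chi}, e^{-\chi})$. Because $\chi$ is scalar valued, $D_\chi = D_{\chi_-} D_{\chi_0} D_{\chi_+}$, where $D_{\chi_\pm}$ are diagonal with entries in $H^0(\Delta^*)$ (resp.\ $H^0(\Delta)$) and value $I$ at $\infty$ (resp.\ $0$), and $D_{\chi_0}$ is a constant unitary diagonal.

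The first step is to substitute (I.3) for $k_1$ (hence $k_1^*$) and (II.3) for $k_2$ into $g$, and to commute all diagonal factors past one another. This leads to an expression in which (i) the leftmost block is $\left(\begin{matrix}\alpha_1^* & \gamma_1^* \\ \beta_1^* & \delta_1^*\end{matrix}\right) D_{\chi_-}$, which lies in $H^0(\Delta^*, SL(2,\mathbb C))$ with unipotent lower-triangular value at $\infty$ by the normalization of $k_1$ at $0$; (ii) the rightmost block is $D_{\chi_+} \left(\begin{matrix}\alpha_2 & \beta_2 \\ \gamma_2 & \delta_2\end{matrix}\right)$, which lies in $H^0(\Delta, SL(2,\mathbb C))$ with unipotent upper-triangular value at $0$; (iii) the central block is $m \cdot a = D_{\chi_0} \cdot \text{diag}(\mathbf a_1 \mathbf a_2, (\mathbf a_1 \mathbf a_2)^{-1})$; and (iv) a single residual scalar off-diagonal term $M = (\mathbf a_1\mathbf a_2 e^{\chi_0})^{-2} e^{2\chi_+^*} Y + e^{2\chi_+} X^*$, with $Y = \mathbf a_1^2 y$ and $X = \mathbf a_2^{-2} x$ (here $\chi_+^* = -\chi_-$ since $\chi$ takes values in $i\mathbb R$), mediates between the two sides through an upper-triangular unipotent factor. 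Writing $M = M_- + M_{0+}$ by Hardy projection and pushing $\left(\begin{matrix}1 & M_-\\0 & 1\end{matrix}\right)$ to the left of $D_{\chi_+}$ via the commutation $D_{\chi_+} \left(\begin{matrix}1 & M_-\\0 & 1\end{matrix}\right) = \left(\begin{matrix}1 & e^{2\chi_+} M_-\\0 & 1\end{matrix}\right) D_{\chi_+}$, and dually handling $M_{0+}$ on the right, produces explicit candidates $l$ and $u$ of the form displayed in the proof of Theorem \ref{W12lowerstrata}. Direct multiplication then verifies $g = l m a u$ with the required normalizations $l(\infty)$ unipotent lower triangular and $u(0)$ unipotent upper triangular.

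The hard part will be the $L^2$ radial-boundary condition on $l$ and $u$. Unlike in the $W^{1/2}$ case, one cannot invoke the Plancherel-type determinant identities (\ref{Toeplitz0})--(\ref{Toeplitz1}) directly, and $e^{\pm \chi_+}$ need not be bounded when $\chi_+$ is merely VMO or $L^2$ (the caveat noted in Remark \ref{factremark}). The crux is therefore to prove $M_-, M_{0+} \in L^2(S^1)$, after which the matrix products defining $l$ and $u$ inherit $L^2$ radial limits from the identity $|\alpha_2|^2 + |\beta_2|^2 = \mathbf a_2^{-2}(1+|x|^2)$ on $S^1$ (cf.\ (\ref{normidentity})) and its analogue for $k_1$, combined with $L^2$-boundedness of the Hardy projection. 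This is precisely where the hypothesis on $g^{-1}$ enters: running the same construction for $g^{-1}$ with $k_1'$ and $k_2'$, and comparing with the formal identity $g^{-1} = u^* a m^{-1} l^*$ extracted from a putative triangular factorization of $g$, yields dual $L^2$ estimates which, together with those from $g$, force $M_\pm \in L^2$ and hence the desired boundary regularity. Once this is in hand, $g = l m a u$ is the triangular factorization of $g$ in the sense of (\ref{factorization}).
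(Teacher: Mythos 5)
Your plan starts on the right track: the paper does indeed build candidate factors from Proposition \ref{trifactorization} (the formulas for $l(g)$, $m(g)$, $a(g)$, $u(g)$, with $M=(a_0m_0)^{-2}e^{2\chi_+^*}Y+e^{2\chi_+}X^*$) and does indeed rescue the $L^2$ regularity using the duality $g\leftrightarrow g^{-1}$. But you misidentify the crux. You assert the key step is to show $M_-, M_{0+}\in L^2$, and that once this is known the $L^2$ boundary regularity of $l$ and $u$ ``inherits'' from the pointwise identities on $|\alpha_i|^2+|\beta_i|^2$. Neither half of this is what the paper does, and the implication you claim does not actually close the argument: the problematic column of $l(g)$ is
$\bigl(\alpha_1^*e^{-\chi_+^*}M_-+\gamma_1^*e^{\chi_+^*},\ \beta_1^*e^{-\chi_+^*}M_-+\delta_1^*e^{\chi_+^*}\bigr)^{\mathsf T}$,
so even if $M_-\in L^2$ you are left with a product of two $L^2$ functions (merely $L^1$) plus a term carrying the uncontrolled factor $e^{\chi_+^*}$, which satisfies $|e^{\chi_+^*}|=e^{\mathrm{Re}(\chi_+)}$ and for which the hypothesis gives control only on $e^{-\mathrm{Re}(\chi_+)}$, not $e^{+\mathrm{Re}(\chi_+)}$. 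So ``$M_\pm\in L^2$'' would be neither necessary nor sufficient for the conclusion.

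The actual argument sidesteps $M_\pm$ entirely. First, the pointwise unitarity identities $|\alpha_1|^2+|\beta_1|^2=\mathbf a_1^{-2}$ and $|\gamma_2|^2+|\delta_2|^2=\mathbf a_2^{2}$ show that the first column of $l(g)$ and the second row of $u(g)$ are $L^2$ exactly because $\exp(-\mathrm{Re}\,\chi_+)\in L^2$. The remaining (``hopeless'') second column of $l(g)$ and first row of $u(g)$ are handled not by estimating $M$, but by the observation that if $g=lmau$ as measurable functions then $g^{-1}=g^*=u^*m^*al^*$; hence the hard second column of $l(g)$ is the complex-conjugate of the second row of $u(g^{-1})$, and the hard first row of $u(g)$ is the complex-conjugate of the first column of $l(g^{-1})$. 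Applying the first observation to the root subgroup factorization of $g^{-1}$ (which is assumed to exist, with $\exp(-\chi'_+)\in L^2$) makes those the ``easy'' pieces and shows directly that they are $L^2$. So the hypothesis on $g^{-1}$ is not used to produce auxiliary estimates that ``force $M_\pm\in L^2$''; it is used to swap each hard column/row for an easy one. You should replace your claimed crux with this direct identification, and drop the dependence on (\ref{normidentity}) and $M_\pm$ for the hard pieces.
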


\begin{proof} We will recall some more formulas which relate triangular and root subgroup factorization.

\begin{proposition}\label{trifactorization} Suppose that $\eta,\chi,\zeta$ are sufficiently regular (e.g. $\mathbf w^{1/2}$) Then $g=k_1^*e^{\chi}k_2$ has triangular factorization $g=l(g)m(g)a(g)u(g)$, where
$$l(g)=\left(\begin{matrix}l_{11}&l_{12}\\
l_{21}&l_{22}\end{matrix}\right)=\left(\begin{matrix} \alpha_1^{*}&-(Y^*\alpha_1)_{-}\\
\beta_1^{*}&1-(Y^*\beta_1)_-\end{matrix}
\right)\left(\begin{matrix}e^{-\chi_+^*}&0
\\0&e^{\chi_+^*}\end{matrix}\right)
\left(\begin{matrix}1&M_-\\0&1\end{matrix}\right)
$$

$$m(g)=\left(\begin{matrix}e^{\chi_0}&0\\0& e^{-\chi_0}\end{matrix}\right),
\quad a(g)=\left(\begin{matrix}a_0&0\\0&a_0^{-1}\end{matrix}\right)=
\left(\begin{matrix}a_1a_2&0\\0&(a_1a_2)^{-1}\end{matrix}\right)$$

$$u(g)=\left(\begin{matrix}u_{11}&u_{12}\\
u_{21}&u_{22}\end{matrix}\right)=\left(\begin{matrix} 1&M_{0+}\\
0&1\end{matrix} \right)\left(\begin{matrix}e^{\chi_+}&0
\\0&e^{-\chi_+}\end{matrix}\right)\left(\begin{matrix} 1-(X^*\gamma_2)_{+}&-(X^*\delta_2)_{0+}\\
\gamma_2&\delta_2\end{matrix}\right) $$
$Y=\mathbf a_1^{2}y$, $X=\mathbf a_2^{-2}x$, and
$$M=(a_0m_0)^{-2}e^{2\chi_+^*}Y+e^{2\chi_+}X^{*}$$

\end{proposition}

\begin{proof} Given the triangular factorizations for $k_1$
and $k_2$, $g$ equals
$$\left(\begin{matrix} \alpha_1&\beta_1\\
\gamma_1&\delta_1\end{matrix}
\right)^*\left(\begin{matrix}1&Y\\0&1\end{matrix}\right)\left(\begin{matrix}a_1a_2e^{-\chi_+^*+\chi_0+\chi_+}&0
\\0&(a_1a_2e^{-\chi_+^*+\chi_0+\chi_+})^{-1}\end{matrix}\right)
\left(\begin{matrix} 1&X^{*}\\
0&1\end{matrix} \right)\left(\begin{matrix} \alpha_2&\beta_2\\
\gamma_2&\delta_2\end{matrix} \right)$$

\begin{equation}\label{triformula}=\left(\begin{matrix} \alpha_1^*&\gamma_1^*\\
\beta_1^*&\delta_1^*\end{matrix}
\right)\left(\begin{matrix}e^{-\chi_+^*}&0
\\0&e^{\chi_+^*}\end{matrix}\right)\end{equation}
$$\left(\begin{matrix}1&e^{2\chi_+^*}Y\\0&1\end{matrix}\right)
\left(\begin{matrix}a_1a_2e^{\chi_0}&0
\\0&(a_1a_2e^{\chi_0})^{-1}\end{matrix}\right)
\left(\begin{matrix} 1&e^{2\chi_+}X^{*}\\
0&1\end{matrix} \right)\left(\begin{matrix}e^{\chi_+}&0
\\0&e^{-\chi_+}\end{matrix}\right)\left(\begin{matrix} \alpha_2&\beta_2\\
\gamma_2&\delta_2\end{matrix} \right)$$ The product of the middle
three factors is upper triangular, and it is easy to find its
triangular factorization:
$$\left(\begin{matrix}1&e^{2\chi_+^*}Y\\0&1\end{matrix}\right)
\left(\begin{matrix}a_1a_2e^{\chi_0}&0
\\0&(a_1a_2e^{\chi_0})^{-1}\end{matrix}\right)
\left(\begin{matrix} 1&e^{2\chi_+}X^{*}\\
0&1\end{matrix} \right)$$
$$=\left(\begin{matrix}1&M_-\\0&1\end{matrix}\right)
\left(\begin{matrix}a_1a_2e^{\chi_0}&0\\0& (a_1a_2e^{\chi_0})^{-1}\end{matrix}\right)\left(\begin{matrix} 1&M_{0+}\\
0&1\end{matrix} \right) $$
where
$$M=(a_1a_2)^{-2}e^{-2(-\chi_+^*+\chi_0)}Y+e^{2\chi_+}X^{*} $$

It remains to explain the formulas in the proposition for $\gamma_1^*$, $\delta_1^*$, $\alpha_2$ and
$\beta_2$ in (\ref{triformula}). The unitarity equations for $k_2$
imply
$$\alpha_2=-a_2^{-2}x^*\gamma_2+a_2^{-2}\delta_2^*\quad
\text{and}\quad \beta_2=-a_2^{-2}x^*\delta_2-a_2^{-2}\gamma_2^*$$
Applying the $(\cdot)_{0+}$ projection to each of these,  we obtain
$\alpha_2=1-(X^*\gamma_2)_+$ and $\beta_2=-(X^*\delta_2)_{0+}$.

The formulas for $\gamma_1$ and $\delta_1$ are derived in a similar way.
\end{proof}

We claim that the formulas in the Proposition yield
a triangular factorization for $g$.
We need to show that the $l(g)$ and $u(g)$ factors are $L^2$. On $S^1$
$$|\alpha_1|^2+|\beta_1|^2=a_1^{-2} \text{ and }|\gamma_2|^2+|\delta_2|^2=a_2^2 $$
Consequently the first column of $l(g)$ and the second row of $u(g)$ are $L^2$ iff
$$exp(Re(\chi_-))=exp(-Re(\chi_+)) \in L^2 $$
We are assuming this in (iii), and hence the first column of $l(g)$ and the
second row of $u(g)$ are $L^2$.

The second column of $l(g)$ and the first row of $u(g)$ appear to be hopeless. But here is the key fact:
$g$ has a triangular factorization iff $g^{-1}$ has a triangular factorization (If $g=lmau$, then $g^{-1}=u(g)^*m(g)^*a(g)l(g)^*$). Moreover the problematic second column for $l(g)$ is the adjoint of the second row of $u(g^{-1})$,
and similarly the problematic first row of $u(g)$ is the adjoint of the first column of $l(g^{-1})$. It is not a priori clear (and it is undoubtedly not true) that for a general measurable $g:S^1 \to SU(2)$, $g$ has a root subgroup factorization iff $g^{-1}$ has a root subgroup factorization. But we do not have a concrete example to offer. In (iii), we are assuming both $g$ and $g^{-1}$ have root subgroup factorizations. Consequently the second column of $l(g)$ and the
first row of $u(g)$ are also $L^2$ Thus $g$ has a triangular factorization as in (ii).
\end{proof}

\begin{theorem}\label{L2theorem}Assume that $g$ has a triangular factorization. Then $g$ (and $g^{-1}$) have root subgroup
factorizations as in (iii), where in (iii) we mean in the sense of (I.1) and (II.1) of Question \ref{introtheorem3}.

\end{theorem}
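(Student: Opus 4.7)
\begin{proofidea}
The strategy is to reconstruct $k_1$, $k_2$, and $\chi$ from $g=lmau$ by extracting singular inner and outer factors from the second row of $u$ and the first column of $l$, and then verifying diagonality of $k_1gk_2^{-1}$ via the observation that an upper triangular $SU(2)$ matrix is automatically diagonal.

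First I would construct $k_2$. The second row $(u_{21},u_{22})\in H^0(\Delta)^2$ has $L^2$ radial limits, $u_{21}(0)=0$, $u_{22}(0)=1$, and since $\det u=1$ in $\Delta$ these entries do not simultaneously vanish there. Let $I$ be the greatest common inner divisor of $u_{21}$ and $u_{22}$; the absence of common Blaschke zeros forces $I$ to be a singular inner function, hence nonvanishing in $\Delta$. Write $u_{21}=Iv_{21}$, $u_{22}=Iv_{22}$. Since $v_{22}(0)\neq 0$, standard Hardy space theory yields $\log(|v_{21}|^2+|v_{22}|^2)\in L^1(S^1)$, so one takes the outer function $O\in H^0(\Delta)$ with $|O|^2=|v_{21}|^2+|v_{22}|^2$ on $S^1$ and $O(0)>0$. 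Normalizing $I$ so that $I(0)>0$ and setting $c_2=v_{21}/O$, $d_2=v_{22}/O$, the maximum principle gives $c_2,d_2\in H^\infty(\Delta)$ with $|c_2|^2+|d_2|^2=1$ on $S^1$, no common zero in $\Delta$, $c_2(0)=0$, and $d_2(0)=(I(0)O(0))^{-1}>0$. Thus $k_2:=\bigl(\begin{smallmatrix} d_2^{*}&-c_2^{*}\\ c_2 & d_2\end{smallmatrix}\bigr)$ satisfies (II.1). The exactly analogous construction in $\Delta^*$, applied to the first column $(l_{11},l_{21})$, produces a common singular inner factor $J$, an outer factor $Q$ with $|Q|^2=|l_{11}|^2+|l_{21}|^2$ on $S^1$, and a $k_1$ satisfying (I.1).

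Next I would verify diagonality of $k_1gk_2^{-1}$. Using $k_2^{-1}=k_2^{*}$ on $S^1$, direct computation gives $(uk_2^{-1})_{21}=u_{21}d_2-u_{22}c_2=IO(c_2d_2-d_2c_2)=0$ and $(uk_2^{-1})_{22}=u_{21}c_2^{*}+u_{22}d_2^{*}=IO(|c_2|^2+|d_2|^2)=IO$ on $S^1$; combined with $\det(uk_2^{-1})=1$, this shows $uk_2^{-1}$ is upper triangular with diagonal $((IO)^{-1},IO)$. The symmetric computation shows $k_1l$ is upper triangular with diagonal $(JQ,(JQ)^{-1})$. Hence $k_1gk_2^{-1}=(k_1l)(ma)(uk_2^{-1})$ is upper triangular with diagonal $(\lambda,\lambda^{-1})$, where $\lambda:=JQ\,m_0a_0/(IO)$. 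Because $g,k_1,k_2\in SU(2)$ a.e.\ on $S^1$, the product $k_1gk_2^{-1}$ is special unitary; the orthonormality of its columns then forces the off-diagonal entry to vanish and $|\lambda|=1$. Setting $e^\chi:=\lambda$ yields $g=k_1^{*}\operatorname{diag}(e^\chi,e^{-\chi})k_2$. The Fourier decomposition identifies $e^{-\chi_+}=IO/\mathbf{a}_2$ with $\mathbf{a}_2:=I(0)O(0)>0$, so $\|e^{-\chi_+}\|_{L^2}^2=\mathbf{a}_2^{-2}\int_{S^1}(|u_{21}|^2+|u_{22}|^2)\,dm<\infty$ since $u$ has $L^2$ radial limits.

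Finally, for $g^{-1}$ the identity $g^{-1}=g^{*}=u^{*}m^{*}a\,l^{*}$ is itself a triangular factorization, with $u^{*}\in H^0(\Delta^*)$ and $l^{*}\in H^0(\Delta)$ playing the roles of $l$ and $u$; applying the same procedure produces the corresponding root subgroup factorization of $g^{-1}$ with $\exp(-\chi_+')\in L^2$. The main technical point, and what makes the argument work beyond the $W^{1/2}$ setting, is ensuring that $\chi_+$ is a well-defined analytic function on $\Delta$; this is precisely where $\det u=1$ intervenes, ruling out common Blaschke zeros of $u_{21}$ and $u_{22}$ so that the extracted common inner divisor $I$ is purely singular and $IO$ is nonvanishing in $\Delta$.
\end{proofidea}
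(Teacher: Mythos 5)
Your overall strategy — reconstruct $k_1,k_2$ from the triangular factorization by an inner–outer analysis of the second row of $u$ and the first column of $l$, then verify diagonality of $k_1gk_2^{-1}$ via the unitarity argument — is the same strategy the paper uses. The diagonality/unitarity argument is correct, the outer normalization via $\log(|v_{21}|^2+|v_{22}|^2)\in L^1$ is sound (it is the same point the paper establishes via Jensen's inequality applied to $\mathbf a_2$), and the $L^2$ estimate for $\exp(-\chi_+)$ is right. The treatment of $g^{-1}$ via $g^{-1}=u^*m^*al^*$ also matches the paper's.

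However, your extra step of extracting the greatest common \emph{singular inner} divisors $I$ of $(u_{21},u_{22})$ and $J$ of $(l_{11},l_{21})$ creates a genuine gap. With your normalization one gets $e^{-\chi_+}=IO/\mathbf a_2$, $e^{\chi_-}=JQ/(J(\infty)Q(\infty))$, and a constant factor $\lambda_0=J(\infty)Q(\infty)m_0a_0/(I(0)O(0))=m_0\,J(\infty)/I(0)$, using the relation $Q(\infty)a_0=O(0)$ that follows from $|\lambda|=1$. Since $I$ and $J$ are extracted \emph{independently} from $u$ and from $l$, there is no reason that $J(\infty)=I(0)$, and in general $\lambda_0\notin S^1$. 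Equivalently, $\chi_-\neq-\chi_+^*$ and $\chi_0\notin i\mathbb R$, so your $\chi_-+\chi_0+\chi_+$ is not a decomposition of an $i\mathbb R$-valued $\chi$ of the required type: it is not compatible with the $m$-factor of the triangular factorization, which must have $m_0\in S^1$. A concrete failure: take $g=k_2$ with $c_2,d_2$ carrying a nontrivial common singular inner factor $I'$; then $l=\left(\begin{smallmatrix}1&x^*\\0&1\end{smallmatrix}\right)$ so $J=1$ and $J(\infty)=1$, while $I=I'$ and $I(0)=I'(0)<1$, yielding $|\lambda_0|=1/I'(0)>1$. The paper avoids this entirely by \emph{not} extracting $I$ and $J$: it sets $e^{-\chi_+}=O/\mathbf a_2$ (purely outer, with $\mathbf a_2=O(0)$ given by the Szeg\H{o}/Jensen integral), so that $c_2=u_{21}/O$, $d_2=u_{22}/O$ may retain a common singular inner divisor (which (II.1) permits since they still have no common zero in $\Delta$, $O$ being nonvanishing). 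With this choice one gets $\lambda_0=m_0\in S^1$ automatically and $\chi_-=-\chi_+^*$. Your proof would be correct if you simply dropped the extraction of $I$ and $J$ and used the outer functions $O,Q$ alone.
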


\begin{proof} Although somewhat longwinded, it is straightforward to use the formulas
in Proposition \ref{trifactorization} to find candidates for the
factors $k_1,\chi$ and $k_2$, see (3.4)-(3.19)
of \cite{Pi1} (when consulting these formulas, note that the $\chi_+$ of this paper is denoted by
$\chi$ in \cite{Pi1}). We will now list these formulas, explain why they make sense, and note their
significance:
\begin{equation}\mathbf a_1=exp(-\frac1{4\pi}\int_{S^1}log(\vert l_{11} \vert
^2+ \vert l_{21} \vert ^2)d\theta),\end{equation}
\begin{equation}
\mathbf a_2=exp(\frac1{4\pi}\int_{S^1}log(\vert u_{21} \vert ^2+\vert
u_{22} \vert ^2)d\theta),\end{equation}
We claim these are finite positive numbers. By assumption $l,u$ are square integrable around $S^1$,
and $0<\mathbf a=\mathbf a_1\mathbf a_2<\infty$. This implies $\mathbf a_1$ and $\mathbf a_2$ are nonzero.
Jensen's inequality implies
$$ \mathbf a_2^2\le \int_{S^1}(\vert u_{21} \vert ^2+\vert
u_{22} \vert ^2)\frac{d\theta}{2\pi}<\infty$$
Thus $\mathbf a_1$ and $\mathbf a_2$ are finite. This proves the claim.

On $S^1$,
\begin{equation}\vert l_{11} \vert ^2+ \vert l_{21} \vert ^2= \mathbf a_1^{-2} exp(2Re(\chi_-))\end{equation}
\begin{equation}\label{chieqn}\vert u_{21} \vert ^2+\vert u_{22} \vert^2=\mathbf a_2^2 exp(-2Re(\chi_+))\end{equation}
These formulas imply $\exp(-\chi_+)\in L^2$, as in (iii).
\begin{equation}\label{1}l_{11}=\alpha_1^*exp(\chi_-),\quad l_{21}=\beta_1^*exp(\chi_-)\end{equation}
and
\begin{equation}\label{2}u_{21}=\gamma_2exp(-\chi_+),\quad u_{22}=\delta_2exp(-\chi_+)\end{equation}
and on $S^1$,
\begin{equation}\vert \alpha_1 \vert ^2+\vert \beta_1 \vert ^2=\mathbf a_1^{-2}\end{equation}
\begin{equation}\vert \delta_2 \vert ^2+\vert \gamma_2 \vert ^2=\mathbf a_2^{2}\end{equation}
These formulas enable us to recover measurable loops $k_1,k_2:S^1 \to SU(2)$,
$$k_1=\mathbf a_1\left(\begin{matrix}\alpha_1&\beta_1\\-\beta_1^*&\alpha_1^*\end{matrix}\right) \text{ and }
k_2=\mathbf a_2^{-1}\left(\begin{matrix}\delta_2^*&-\gamma_2^*\\\gamma_2&\delta_2\end{matrix}\right) $$
Because $l^*$ is invertible at all points of
$\Delta$, (\ref{1}) implies that the entries $a_1$ and $b_1$ of $k_1$ do not simultaneously
vanish. Similarly, because $u$ is invertible, (\ref{2}) implies that the entries $c_2$ and
$d_2$ do not simultaneously vanish. Using Theorem \ref{introtheorem3} we can obtain $\eta$ and $\zeta$ from
the Taylor series expansions of $\beta_2/\alpha_2$ and $\gamma_2/\delta_2$, and $\eta$ and $\zeta$
are in $l^2$ because of the finiteness of $\mathbf a_1$ and $\mathbf a_2$.
\end{proof}

One of several shortcomings of this theorem is that we have assumed that both $g$ and $g^{-1}$ have root subgroup factorizations. This is undesirable because there are (hopelessly) complicated compatibility relations involving
the pairs of parameters $\eta, \zeta,\chi$ and $\eta',\chi',\zeta'$, for $g$ and $g^{-1}$, respectively.

\begin{example} Suppose that $g=k_2(\zeta)$, i.e. $\eta$ and $\chi$ are zero. In this case
$g^{-1}$ has the triangular decomposition
$$g^{-1}=g^*=\left(\begin{matrix} \alpha_2^* (z)&\gamma_2^* (z)\\
\beta_2^* (z)&\delta_2^* (z)\end{matrix} \right)\left(\begin{matrix}\mathbf a_2&0\\
0&\mathbf a_2^{-1}\end{matrix} \right)\left(\begin{matrix} 1&0\\
x&1\end{matrix} \right)
$$
Therefore
$$\frac{\gamma_2(g^{-1})}{\delta_2(g^{-1})}(z)=x_1(\zeta_1,...)z^1+...$$
implying $\xi_n(g^{-1})=x_n(g)$ and in particular
$$-\zeta_1(g^*)=x_1^*(\zeta_1,...) $$
The formula for $x_1^*$ is discussed in an appendix - suffice it to say, it is complicated.

\end{example}

\section{The $VMO$ Theory}\label{VMO}

In this section we will consider VMO loops and compact operators. Everything we say can
be generalized to Besov class $B^{1/p}_p$ loops and Schatten $p$-class operators. For simplicity of exposition
we will focus on the maximal class, VMO.

We begin by recalling basic facts about the abelian case, $VMO(S^1,S^1)$.
The notion of degree (or winding number) can be extended from $C^0$ to
$VMO(S^1,S^1)$ (see Section 3 of \cite{Brezis} for an amazing
variety of formulas, and further references, or pages 98-100 of
\cite{Peller}). Also given $\lambda \in VMO(S^1,S^1)$, we view
$\lambda$ as a multiplication operator on $H=L^2(S^1)$, with the
Hardy polarization. We write $\dot A(\lambda)$ for the Toeplitz
operator, and so on (with the dot), to avoid confusion with the
matrix case.

\begin{lemma}\label{abelian} There is an exact sequence of topological groups
$$0 \to 2\pi i\mathbb Z \to VMO(S^1,i\mathbb R)
\stackrel{exp}{\rightarrow}
VMO(S^1,S^1)\stackrel{degree}{\rightarrow} \mathbb Z \to 0.$$
Moreover $degree(\lambda)=-index(\dot A(\lambda))$.
\end{lemma}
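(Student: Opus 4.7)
The plan is to establish exactness of the sequence in three steps, then deduce the index formula by comparing $\deg$ with $-\mathrm{ind}(\dot A(\cdot))$ as two continuous integer-valued homomorphisms. For the easy endpoints: the inclusion $2\pi i\mathbb Z\hookrightarrow VMO(S^1,i\mathbb R)$ as constants is injective, $\deg$ is surjective because $\deg(z^n)=n$, and $\exp$ is continuous since the restriction of $\exp$ to the imaginary axis is Lipschitz. The one nontrivial piece is $\ker(\exp)\subset 2\pi i\mathbb Z$: if $e^\chi\equiv 1$ then $\chi$ takes values in the discrete set $2\pi i\mathbb Z$ almost everywhere, and I claim that a VMO function with essentially discrete range is essentially constant. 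For a measurable $E\subset S^1$ with $p(I)=|E\cap I|/|I|$, the mean oscillation of $\chi_E$ over $I$ equals $2p(I)(1-p(I))$; the VMO condition forces $p(I)\to 0$ or $p(I)\to 1$ uniformly in the center of $I$ as $|I|\to 0$. The resulting limit $\theta\mapsto\lim_{|I|\to 0}p(\theta,I)$ is continuous (by the uniformity) with values in $\{0,1\}$, so on the connected space $S^1$ it is constant; applying this to each level set of $\chi$ shows $\chi$ is a.e. constant.

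For exactness in the middle, the inclusion $\mathrm{im}(\exp)\subset\ker(\deg)$ is immediate, since $t\mapsto e^{t\chi}$ is a $VMO$-continuous path from $1$ to $e^\chi$ and $\deg$ is continuous and $\mathbb Z$-valued. The reverse inclusion, that every degree-zero $\lambda\in VMO(S^1,S^1)$ admits a VMO logarithm, is the main obstacle, and I would invoke the Brezis-Nirenberg lifting theorem \cite{BN1}. Without that external input, the natural strategy is to approximate $\lambda$ in $VMO$ by smooth degree-zero loops $\lambda_n$, lift each to $\chi_n\in C^\infty(S^1,i\mathbb R)$ of mean zero, and verify that $(\chi_n)$ is Cauchy in $VMO$; this convergence is the substantive analytic step.

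For the index formula, $\dot A(\lambda)$ is Fredholm for every $\lambda\in VMO(S^1,S^1)$: the Hankel operator $\dot B(\lambda)$ is compact by Hartman's theorem (Proposition \ref{opertopology}(a) for $p=\infty$), so $\dot A(\lambda)\dot A(\lambda^{-1})=I-\dot B(\lambda)\dot B(\lambda)^*$ is a compact perturbation of the identity. The identity $\dot A(\lambda\mu)=\dot A(\lambda)\dot A(\mu)+\dot B(\lambda)\dot C(\mu)$, with compact error, makes $\lambda\mapsto -\mathrm{ind}(\dot A(\lambda))$ a group homomorphism $VMO(S^1,S^1)\to\mathbb Z$; continuity follows from Proposition \ref{opertopology}(c)--(d) specialized to the scalar case. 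Both $\deg$ and $-\mathrm{ind}\circ\dot A$ are now continuous $\mathbb Z$-valued homomorphisms; both send $z$ to $1$ (the second by the classical fact that the unilateral shift has Fredholm index $-1$), and both vanish on $\mathrm{im}(\exp)$---for $\deg$ by the exactness established above, and for $\mathrm{ind}\circ\dot A$ by the divisibility argument $\mathrm{ind}(\dot A(e^\chi))=n\cdot\mathrm{ind}(\dot A(e^{\chi/n}))$ for every $n\ge 1$, which forces the value to be $0$. Since every $\lambda\in VMO(S^1,S^1)$ factors as $e^\chi\cdot z^n$ by the exact sequence, the two homomorphisms coincide.
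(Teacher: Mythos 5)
Your proof is correct in its essentials and considerably more explicit than the paper's, which consists of a citation to Peller (pages 100--101) plus the single remark that VMO functions cannot have jump discontinuities (from which $\ker(\exp)=2\pi i\mathbb Z$ follows). Your density-point argument for why a VMO function with essentially discrete range is a.e.\ constant is a clean unpacking of that remark, and your derivation of the index formula---two continuous integer-valued homomorphisms on $VMO(S^1,S^1)$ that agree on $\mathrm{im}(\exp)$ and on $z$, together with the divisibility trick $\mathrm{ind}(\dot A(e^{\chi}))=n\cdot\mathrm{ind}(\dot A(e^{\chi/n}))$ to get vanishing on $\mathrm{im}(\exp)$---is an elegant, self-contained alternative to the reference. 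The one step where you lean on a black box is the same step the paper does: that a degree-zero VMO loop in $S^1$ has a VMO logarithm; you invoke Brezis--Nirenberg where the paper invokes Peller, and these are interchangeable sources for that fact. One small caution: your justification that $\exp\colon VMO(S^1,i\mathbb R)\to VMO(S^1,S^1)$ is continuous ``because $\exp$ restricted to $i\mathbb R$ is Lipschitz'' is not by itself a proof. Lipschitzness of $\phi$ gives \emph{boundedness} of the superposition operator $f\mapsto\phi\circ f$ on BMO, but \emph{continuity} of this nonlinear map on VMO is a separate (true) fact, proved in Brezis--Nirenberg via density of continuous functions and uniform estimates; it would be cleaner to cite that result directly rather than suggest the Lipschitz bound suffices, especially since you rely on this continuity both for the path $t\mapsto e^{t\chi}$ and for the exactness of the sequence as a sequence of \emph{topological} groups.
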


This is implicit on pages 100-101 of \cite{Peller}. The important point is that a $VMO$ function cannot have jump discontinuities. This implies that the kernel of $exp$ is $2 \pi i \mathbb Z$. Thus the
sequence in the statement of the Lemma is continuous and exact.

\begin{remark} This should be contrasted with the measurable case. The short exact sequence
$0\to \mathbb Z\to \mathbb R \to \mathbb T\to 0$ induces a short exact sequence of Polish topological groups
$$0 \to Meas([0,1],\mathbb Z) \to Meas([0,1],\mathbb R) \to Meas([0,1],\mathbb T) \to 0$$ (see Section 2, especially Proposition 9, of \cite{Moore}). However $Meas([0,1],\mathbb Z)$ is not discrete, and (just as the unitary group of an infinite dimensional Hilbert space is contractible - in either the strong operator or norm topology) $Meas([0,1],\mathbb T)$ is contractible.
\end{remark}

Our aim now is to specialize Theorems \ref{introtheorem1} and \ref{introtheorem2} to VMO loops. It seems
unlikely that one can characterize the sequences $\eta$ and $\zeta$ that will correspond to VMO
loops $k_1,k_2:S^1 \to SU(2)$, respectively, as in Theorem \ref{introtheorem1} (so far as we are aware,
it is not known how to characterize scalar VMO functions in terms of their Fourier coefficients).
For this reason we will use $y\in VMO_{0+}$ and $x\in VMO_+$ as parameters.

\begin{proposition} Suppose $k_2:S^1\to SU(2)$. The following two conditions are equivalent:

(II.1) $k_2\in VMO$ is of the form
$$k_2(z)=\left(\begin{matrix} d^{*}(z)&-c^{*}(z)\\
c(z)&d(z)\end{matrix} \right),\quad z\in S^1,$$ where $c,d\in
H^0(\Delta)$, $c(0)=0$, $d(0)>0$, $c$ and $d$ do not
simultaneously vanish at a point in $\Delta$.

(II.3) $k_2$ has triangular factorization of the form
$$\left(\begin{matrix} 1&\sum_{j=1}^{\infty}x^*_jz^{-j}\\
0&1\end{matrix} \right)\left(\begin{matrix}\mathbf a_2&0\\
0&\mathbf a_2^{-1}\end{matrix} \right)\left(\begin{matrix} \alpha_2 (z)&\beta_2 (z)\\
\gamma_2 (z)&\delta_2 (z)\end{matrix} \right)$$
where $\mathbf a_2>0$ and $\gamma_2,\delta_2\in VMO$.

There is a similar equivalence for $k_1$.

\end{proposition}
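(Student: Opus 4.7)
The plan is to prove (II.3) $\Rightarrow$ (II.1) by direct inspection, and to reduce (II.1) $\Rightarrow$ (II.3) to the invertibility of the Toeplitz operator $A(k_2)$, so that the existence of a triangular factorization of the required form follows from the standard equivalence of invertibility and triangular factorization (cf.\ the (i) $\Leftrightarrow$ (ii) part of Theorem \ref{introtheorem2} and Corollary \ref{VMOinvert_ops}).

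For the easy direction (II.3) $\Rightarrow$ (II.1), I set $c := \mathbf{a}_2^{-1}\gamma_2$ and $d := \mathbf{a}_2^{-1}\delta_2$. Since $\mathbf{a}_2>0$ is constant and $\gamma_2,\delta_2 \in VMO \cap H^0(\Delta)$, we get $c,d \in VMO \cap H^0(\Delta)$ with $c(0)=0$ and $d(0)=\mathbf{a}_2^{-1}>0$. The matrix $\bigl(\begin{smallmatrix}\alpha_2 & \beta_2 \\ \gamma_2 & \delta_2\end{smallmatrix}\bigr) \in H^0(\Delta,SL(2,\mathbb{C}))$ is pointwise invertible in $\Delta$, so its bottom row cannot vanish at any point of $\Delta$, giving the simultaneous non-vanishing of $c$ and $d$. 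The top-row entries $d^*,-c^*$ are conjugates on $S^1$ of VMO functions and hence lie in VMO, so $k_2 \in VMO$.

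For the harder direction (II.1) $\Rightarrow$ (II.3), I would first establish that $A(k_2)$ is invertible. Since $k_2 \in VMO$ takes values in the simply connected group $SU(2) \simeq S^3$, $A(k_2)$ is Fredholm of index $0$, so invertibility is equivalent to injectivity. Suppose $v \in H_+$ with components $v_1,v_2 \in H^2$ satisfies $A(k_2)v=0$. The two components read $cv_1+dv_2=0$ and $\psi := d^*v_1-c^*v_2 \in \dot H_-$; using $|c|^2+|d|^2=1$ on $S^1$, straightforward algebra yields $v_1=d\psi$ and $v_2=-c\psi$. Because $c,d \in VMO \cap H^\infty \cap H^0(\Delta)$ and any singular inner function or infinite Blaschke product fails to lie in VMO, the inner factors in $c=B_c o_c$, $d=B_d o_d$ are \emph{finite} Blaschke products, coprime by the simultaneous non-vanishing hypothesis. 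Thus $F := v_1/d$ is meromorphic on $\Delta$ with finitely many poles (at the zeros of $B_d$); a partial-fractions decomposition $F=\tilde F + R$ with $\tilde F \in H^2(\Delta)$ and $R$ the finite sum of principal parts (so $R|_{S^1} \in \dot H_-$) combined with $F|_{S^1}=\psi \in \dot H_-$ forces $\tilde F|_{S^1} \in H_+\cap\dot H_- = 0$, hence $\tilde F \equiv 0$ and $F$ is rational on $\hat{\mathbb{C}}$. The same argument applied to $G := -v_2/c$ produces a rational function with poles only at the zeros of $B_c$; since $F=G=\psi$ on $S^1$, $F \equiv G$ as rational functions, and disjointness of pole sets forces $F$ to have no poles at all. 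With $F(\infty)=0$, $F \equiv 0$, so $\psi = v_1 = v_2 = 0$.

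Once $A(k_2)$ is invertible, the unique triangular factorization $k_2 = lmau$ exists. Setting $\mathbf{a}_2 := 1/d(0)$, $\gamma_2 := \mathbf{a}_2 c$, $\delta_2 := \mathbf{a}_2 d$, and constructing $\alpha_2, \beta_2 \in H^0(\Delta)$ together with $x^* \in H^0(\Delta^*)$ (normalized by $x^*(\infty)=0$) via the $A(k_2)^{-1}$-based formulas of Lemma \ref{inverse} and Proposition \ref{trifactorization} (specialized to $\eta=\chi=0$), one exhibits a factorization in the form of (II.3), which by uniqueness of the triangular factorization must coincide with $lmau$. The property $\gamma_2,\delta_2 \in VMO$ is immediate from $c,d \in VMO$. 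The main analytical obstacle is the rational-extension step in the injectivity argument: it relies essentially on $k_2 \in VMO$ to ensure the inner factors of $c,d$ are \emph{finite} Blaschke products, and this is precisely what fails for general $L^2$ loops, where the singular-inner or infinite-Blaschke intruders discussed in Section \ref{L2} can appear.
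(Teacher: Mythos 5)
Your overall strategy is the same as the paper's: reduce (II.1)\,$\Rightarrow$\,(II.3) to invertibility of $A(k_2)$, observe that the $VMO$ hypothesis makes $A(k_2)$ Fredholm and that simple connectivity of $SU(2)$ forces index zero, and then supply an injectivity argument. The paper handles injectivity by citing Lemma~\ref{injectivelemma}, whose proof (specialized to $\chi=0$) writes $\bigl(\begin{smallmatrix}f_1\\f_2\end{smallmatrix}\bigr)=g\bigl(\begin{smallmatrix}d\\-c\end{smallmatrix}\bigr)$ with $g$ holomorphic in $\Delta$, derives $[g]_+=0$, and immediately concludes $g=0$. Your proof attempts to justify that last conclusion (which is the genuinely delicate point, since a holomorphic $g$ in $\Delta$ with $L^2$ boundary values and $[g]_+=0$ need not vanish unless one knows $g$ lies in the Smirnov class $N^+$), and this is a real improvement in intent. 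You also obtain (II.3)\,$\Rightarrow$\,(II.1) correctly and in essentially the same way the paper does in the $L^2$ section.

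However, the step on which your injectivity argument hinges has a gap. You assert: ``because $c,d\in VMO\cap H^\infty$ and any singular inner function or infinite Blaschke product fails to lie in $VMO$, the inner factors $B_c,B_d$ are finite Blaschke products.'' The inference does not follow: the $VMO$ condition is imposed on $c$ and $d$, not on their inner factors, and the outer factor can fully mask a wild inner factor. For instance, if $B$ is an infinite Blaschke product whose zeros accumulate only at $z=1$ and $o(z)=1-z$, then $Bo$ extends continuously to $\overline\Delta$ (hence lies in $C(S^1)\subset VMO\cap H^\infty$), yet its inner part is the infinite Blaschke product $B$. Thus the claim you need --- that the inner factors of $c$ and $d$ are finite Blaschke products --- requires a different justification (or must be replaced by a weaker statement that still makes the Smirnov-class argument work, e.g.\ that the inner parts of $c$ and $d$ are relatively prime and that this already forces $g=f_1/d=-f_2/c$ into $N^+$). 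Until that is repaired, the partial-fractions step, and with it the injectivity of $A(k_2)$, is not established. It is worth noting that the paper's own Lemma~\ref{injectivelemma} is equally terse here, asserting ``$[e^{\chi_+}g]_+=0$ which implies $g=0$'' without spelling out the Smirnov-class justification; you have correctly isolated the issue that the paper elides, but the specific mechanism you invoke to close it does not yet work.
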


\begin{proof}
The equivalence of II.1 and II.3 is proven exactly as in the $W^{1/2}$ case (taking into account
the VMO condition of $\gamma_2$ and $\delta_2$ in II.3). This uses the invertibility of $A(k_2)$. For the injectivity of $A(k_2)$, see Lemma \ref{injectivelemma} below (which is more general). The surjectivity follows since
the VMO condition implies that $A(k_2)$ is Fredholm of index zero.
\end{proof}

\begin{theorem}\label{introtheorem5} (a) For $k_2$ in the preceding proposition, $x\in VMO_+$.

(b)  The map $k_2 \to x$ induces a bijection
$$\{k_2: \text{ II.1 and II.3 hold }\} \leftrightarrow VMO_+: k_2 \leftrightarrow x$$

(c) In terms of the root subgroup factorization in Theorem \ref{rootsubgpfact}, the singular inner function $\lambda=1$.

There is a similar statement for $k_1$.
 \end{theorem}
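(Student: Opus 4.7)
The plan is to handle the three parts in the order (c), (a), (b).

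For (c), the conclusion is immediate from Theorem \ref{rootsubgpfact}: that factorization writes $k_2=\mathrm{diag}(\lambda,\lambda^{-1})\,k_2(\zeta)$ with $\lambda$ a singular inner function and $\lambda(0)=1$. Since $k_2\in VMO$, the diagonal prefactor must be VMO; but as noted in the proof of Theorem \ref{rootsubgpfact}, the Caratheodory function of a nontrivial singular measure fails to be VMO, so $\lambda=1$.

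For (a), I first derive a closed form for $x^*$. Reading matrix entries of $k_2=lau$ gives $\mathbf{a}_2^2\alpha_2=\delta_2^*-\gamma_2 x^*$ and $\mathbf{a}_2^2\beta_2=-\gamma_2^*-\delta_2 x^*$ on $S^1$. Multiplying by $\bar\gamma_2$ and $\bar\delta_2$, adding, and using $|\gamma_2|^2+|\delta_2|^2=\mathbf{a}_2^2$ together with the cancellation $\bar\gamma_2\delta_2^*-\bar\delta_2\gamma_2^*=0$ on $S^1$ yields
$$x^*=-\bar\gamma_2\alpha_2-\bar\delta_2\beta_2.$$
Introducing compressions $T_\gamma(f):=P_-(\gamma_2 f)$, $T_\delta(f):=P_-(\delta_2 f)$ on $\dot H^-$ and applying $P_-$ (using the identity $\bar\gamma_2(\gamma_2f)_{0+}+\bar\delta_2(\delta_2 f)_{0+}=\mathbf{a}_2^2 f-T_\gamma^*T_\gamma f-T_\delta^*T_\delta f$) converts this into the Fredholm equation
$$L x^*=-\bar\gamma_2,\qquad L:=T_\gamma^*T_\gamma+T_\delta^*T_\delta=\mathbf{a}_2^2 I-K'',$$
where $K'':=\dot B(\gamma_2)^*\dot B(\gamma_2)+\dot B(\delta_2)^*\dot B(\delta_2)$. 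Because $k_2\in VMO$ forces $\gamma_2=\mathbf{a}_2 c$, $\delta_2=\mathbf{a}_2 d$ to lie in $VMOA\cap H^\infty$, the Hankels $\dot B(\gamma_2),\dot B(\delta_2)$ are compact by Proposition \ref{opertopology}(c), so $K''$ is compact and $L$ is Fredholm of index zero. Injectivity: $Lf=0$ forces $T_\gamma f=T_\delta f=0$, i.e., $\gamma_2 f,\delta_2 f\in H^+$; since $\gamma_2,\delta_2$ do not simultaneously vanish in $\Delta$, $f$ extends holomorphically across $\Delta$, and combined with the analytic continuation of $f\in\dot H^-$ to $\Delta^*$ with $f(\infty)=0$, Liouville forces $f=0$. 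Hence $L$ is invertible on $\dot H^-$ and $x^*=-L^{-1}\bar\gamma_2\in\dot H^-$.

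The main obstacle I anticipate is upgrading this $L^2$-invertibility to act on the Banach space $VMO\cap\dot H^-$. The plan is to show $K''$ restricts to a compact operator on $VMO\cap\dot H^-$, using (i) the module property $QC\cdot VMO\subseteq VMO$ (applicable because $\gamma_2,\delta_2\in QA\subset QC$), and (ii) VMO-boundedness of the Riesz projection (Sarason). Granting these, Fredholm alternative on $VMO\cap\dot H^-$ produces a bounded inverse $L^{-1}$; since $-\bar\gamma_2\in VMO\cap\dot H^-$ (as $\gamma_2\in VMOA$), we conclude $x^*\in VMO$, equivalently $x\in VMOA$. Strictly speaking $\|K''\|<\mathbf{a}_2^2$ (since a compact self-adjoint $K''\leq\mathbf{a}_2^2 I$ with $L$ injective cannot attain the eigenvalue $\mathbf{a}_2^2$), so on $L^2$ the Neumann series $L^{-1}=\mathbf{a}_2^{-2}\sum(\mathbf{a}_2^{-2}K'')^n$ converges, and the VMO module estimates should transfer this convergence term by term.

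For (b), injectivity of $k_2\mapsto x$ is immediate from uniqueness of the triangular factorization. For surjectivity, given $x\in VMOA$ the formulas of Theorem \ref{keyidentities1} produce candidate $\gamma_2,\delta_2,\mathbf{a}_2$; since $\dot B(x)$ is compact, $1+\dot B(x)\dot B(x)^*$ is a compact perturbation of the identity, and the analogous Fredholm-alternative argument (run in reverse, now using the $VMO$-regularity of $1$ as the right-hand side) delivers $\gamma_2,\delta_2\in VMOA$. The preceding proposition then assembles these into a $k_2\in VMO$ realizing this $x$.
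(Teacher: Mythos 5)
Your overall shape is right, but the route you take for part (a) genuinely diverges from the paper's, and the divergence is exactly where your proof has a gap.

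The paper's proof of (a) is very short: by Lemma \ref{inverse}, $(1+\dot B\dot B^*)^{-1}$ is the second component of $A(k_2)A(k_2^{-1})=1-B(k_2)B(k_2)^*$; since $k_2\in VMO$, $B(k_2)$ is compact (Proposition \ref{opertopology}(c)), so $(1+\dot B\dot B^*)^{-1}=1+\text{compact}$, hence so is its inverse $1+\dot B(x)\dot B(x)^*$, hence $\dot B(x)\dot B(x)^*$ is compact, hence $\dot B(x)$ is compact, hence $x\in VMOA$ by Hartman's theorem. All the work is done on $L^2$; the passage to the VMO conclusion is built into Hartman's characterization of compact Hankel operators. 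You, instead, set up a Fredholm equation $Lx^*=-\bar\gamma_2$ on $\dot H^-$ and try to invert $L$ directly on the Banach space $VMO\cap\dot H^-$.

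The gap is the last paragraph of your part (a). You have $L=\mathbf a_2^2 I-K''$ with $K''$ compact on $L^2$ and $\|K''\|_{L^2}<\mathbf a_2^2$, so on $L^2$ you are done. To conclude $x^*\in VMO$ you need either (i) $K''$ compact on $VMO\cap\dot H^-$, or (ii) $\|K''\|_{VMO}<\mathbf a_2^2$. Neither is established. The module property $QC\cdot VMO\subseteq VMO$ and BMO-boundedness of the Riesz projection give boundedness of $K''$ on $VMO\cap\dot H^-$, not compactness, and certainly not a norm bound below $\mathbf a_2^2$. Your phrase "the VMO module estimates should transfer this convergence term by term" is where the argument stops being a proof: the Neumann series converges on $L^2$ because of the $L^2$ operator norm, and that norm controls nothing on $VMO$. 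Compactness of a positive operator on $L^2$ does not in general give compactness of its restriction to a smaller Banach space. You could close this gap, but the clean way to do so is exactly the paper's route: once you have $x^*\in L^2$, form $\dot B(x)$, use the $L^2$ operator identity from Lemma \ref{inverse} to see $\dot B(x)\dot B(x)^*$ is compact, and invoke Hartman. The whole VMO-Fredholm machinery is unnecessary.

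Two smaller issues. In (c), the inference "since $k_2\in VMO$, the diagonal prefactor $\mathrm{diag}(\lambda,\lambda^{-1})$ must be VMO" is a non sequitur: $k_2$ being VMO tells you nothing directly about either factor of the product $\mathrm{diag}(\lambda,\lambda^{-1})k_2(\zeta)$, because $k_2(\zeta)$ is not known to be VMO a priori. The logically safe order is to establish (a) and (b) first; then $x\in VMOA$ pins down a unique VMO loop by (b), and since $k_2$ and $k_2(\zeta)$ share the same $c_2/d_2$ Taylor series and hence the same $x$, uniqueness forces $\lambda=1$. (The paper's one-line statement in (c) is also terse, but it rests on this.) In (b), your surjectivity argument ("analogous Fredholm-alternative argument run in reverse") inherits the same VMO-invertibility gap; again the paper's route — $x\in VMOA$ implies $\dot B(x)$ compact implies $(1+\dot B\dot B^*)^{-1}=A(k_2)A(k_2^{-1})$ is $1+\text{compact}$, which characterizes $k_2\in VMO$ — avoids it.
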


\begin{proof} (a) The operator $(1+\dot B\dot B^*)^{-1}$ is essentially the product $A(k_2)A(k_2^{-1})$, which is of the form $1+$compact operator. Thus the inverse $1+\dot B(x)\dot B(x)^*$ is also a compact perturbation of the identity. This is equivalent to $x\in VMO_+$. This proves part (a).

To prove part (b), we simply run the argument the opposite direction: if $x\in VMO_+$, then $(1+\dot B\dot B^*)^{-1}$,
hence also $A(k_2)A(k_2^{-1})$,
is a compact perturbation of the identity. This implies $k_2$ is VMO.

(c) For the Caratheodory function in (\ref{Caratheodory}) to be VMO, $\nu$ has to be absolutely continuous with respect to Lebesgue measure. Hence $\lambda=1$.

\end{proof}

\begin{question} In reference to (a), it is interesting to ask if $\alpha_2$ and $\beta_2$ are VMO.
The analogous question is also open in the $W^{1/2}$ case.
\end{question}

There is a gap in our proof of the following, see Lemma \ref{deformation}.

\begin{theorem}\label{introtheorem6} Suppose that $g\in VMO(S^1,SU(2))$ and assume the truth of Lemma
\ref{deformation} below. The following are equivalent:

(a) $A(g)$ and $A_1(g)$ are invertible.

(b) $g$ has a triangular factorization.

(c) $g$  has a (root subgroup) factorization of the form
$$g= k_1(\eta)^* \left(\begin{matrix}e^{\chi}&0\\0&e^{-\chi}\end{matrix}\right)k_2(\zeta)$$
where $k_1$ and $k_2$ are as in Theorem \ref{introtheorem5}, $\chi\in VMO(S^1;i\mathbb R)$
and $exp(- \chi_+)\in L^2(S^1)$.

\end{theorem}

\begin{proof} The equivalence of (a) and (b) is true more generally for $g\in QC(S^1,SL(2,\mathbb C))$ (see (b) of Remark of \cite{Pi1}). \

To see that (a) and (b) are equivalent to (c), we will need some lemmas. To simplify the notation,
let $h_1=\left(\begin{matrix}1&0\\0&1\end{matrix}\right)$.

\begin{lemma}\label{Toeplitzfactorization} With appropriate domains
 $$ A(k_1^* e^{\chi h_1}k_2)=A(k_1^* e^{\chi_-h_1})A(e^{\chi_{0+}h_1}k_2)$$
The same is true for $A_1$ in place of $A$.
Similarly
$$ D(k_1^* e^{\chi h_1}k_2)=D(k_1^* e^{\chi_-h_1})D(e^{\chi_{0+}h_1}k_2)$$
and the same is true for $D_1$ in place of $D$.
\end{lemma}

\begin{proof}  The first statement is equivalent to showing that
$B(k_1^* e^{\chi_-h_1})C(e^{\chi_{0+}h_1}k_2)$ vanishes. Applied to $\left(\begin{matrix}f_1\\f_2\end{matrix}\right)\in H_+$, this equals
$$B(\left(\begin{matrix}e^{\chi_-}a_1^*&-e^{-\chi_-}b_1\\e^{\chi_-}b_1^*&e^{-\chi_-}a_1\end{matrix}\right))
C(\left(\begin{matrix}e^{\chi_+}d_2^*&-e^{-\chi_+}c_2^*\\e^{-\chi_+}c_2&e^{-\chi_+}d_2\end{matrix}\right))
\left(\begin{matrix}f_1\\f_2\end{matrix}\right)$$
$$=\left[ \left(\begin{matrix}e^{\chi_-}a_1^*&-e^{-\chi_-}b_1\\e^{\chi_-}b_1^*&e^{-\chi_-}a_1\end{matrix}\right)
\left(\begin{matrix}(e^{\chi_+}d_2^*f_1-e^{-\chi_+}c_2^*f_2)_-\\0\end{matrix}\right)\right]_+$$
$$=\left[\left(\begin{matrix}e^{\chi_-}a_1^*(e^{\chi_+}d_2^*f_1-e^{-\chi_+}c_2^*f_2)_-\\
e^{\chi_-}b_1^*(e^{\chi_+}d_2^*f_1-e^{-\chi_+}c_2^*f_2)_-\end{matrix}\right)\right]_+=0 $$
This proves the first statement.

For the second statement involving $A_1$, we are considering a polarization
for $H$ where $H_+$ now has orthonormal basis $\{\epsilon_iz^j:i=1,2,j>0\}\cup\{\epsilon_1\}$ (see (\ref{basis})). We let
$B_1,C_1$ denote the Hankel operators relative to this shifted polarization. We must show
$B_1(k_1^* e^{\chi_-h_1})C_1(e^{\chi_{0+}h_1}k_2)$ vanishes. The calculation is basically the same, but it depends on
our normalizations in a subtle way. Applied to $\left(\begin{matrix}f_1\\f_2\end{matrix}\right)\in H_+$, this equals
$$B_1(\left(\begin{matrix}e^{\chi_-}a_1^*&-e^{-\chi_-}b_1\\e^{\chi_-}b_1^*&e^{-\chi_-}a_1\end{matrix}\right))
C_1(\left(\begin{matrix}e^{\chi_+}d_2^*&-e^{-\chi_+}c_2^*\\e^{-\chi_+}c_2&e^{-\chi_+}d_2\end{matrix}\right))
\left(\begin{matrix}f_1\\f_2\end{matrix}\right)$$
$$=B_1(\left(\begin{matrix}e^{\chi_-}a_1^*&-e^{-\chi_-}b_1\\e^{\chi_-}b_1^*&e^{-\chi_-}a_1\end{matrix}\right))
\left(\begin{matrix}(e^{\chi_+}d_2^*f_1-e^{-\chi_+}c_2^*f_2)_-\\0\end{matrix}\right)$$
where the vanishing of the second entry uses the fact that $c_2(0)=0$. This now equals
$$\left(\begin{matrix}[e^{\chi_-}a_1^*(e^{\chi_+}d_2^*f_1-e^{-\chi_+}c_2^*f_2)_-]_{0+}\\
[e^{\chi_-}b_1^*(e^{\chi_+}d_2^*f_1-e^{-\chi_+}c_2^*f_2)_-]_{+}\end{matrix}\right)=0 $$
This proves the second statement.

The third statement is equivalent to $C(k_1^* e^{\chi_-h_1})B(e^{\chi_{0+}h_1}k_2)=0$. This is a similar calculation.
\end{proof}

\begin{lemma}\label{injectivelemma} $A(k_1^* e^{\chi_-h_1})$ and $A(e^{\chi_{0+}h_1}k_2)$ are injective on their domains, and similarly for $A_1$. \end{lemma}

\begin{proof} The four statements are all proved in the same way. We consider the second assertion concerning $A$. Suppose that
$$
A(\left(\begin{matrix}e^{\chi_+}d_2^*&-e^{-\chi_+}c_2^*\\e^{-\chi_+}c_2&e^{-\chi_+}d_2\end{matrix}\right))
\left(\begin{matrix}f_1\\f_2\end{matrix}\right)=0$$
This implies
$$\left(\begin{matrix}[e^{\chi_+}(d_2^*f_1-c_2^*f_2)]_+\\e^{-\chi_+}(c_2f_1+d_2f_2)\end{matrix}\right))=0 $$
The second component implies $c_2f_1+d_2f_2=0$, and this implies
$$\left(\begin{matrix}f_1\\f_2\end{matrix}\right)=g\left(\begin{matrix}d_2\\-c_2\end{matrix}\right)$$
where $g$ is holomorphic in the disk. Plug this into the first component to obtain
$$[e^{\chi_+}g(d_2d_2^*+c_2c_2^*)]_+=[e^{\chi_+}g]_+=0$$
which implies $g=0$. Thus $f=0$.

\end{proof}

Now assume that (c) of Conjecture \ref{introtheorem6} holds. The lemmas imply that the Toeplitz operator $A(g)$
and the shifted Toeplitz operator $A_1(g)$ are injective. Since these operators are Fredholm, they are invertible. Hence (c) implies (a) and (b).

Now assume (a) and (b). We define $k_1,k_2$ and $\chi$ using the explicit formulas in the proof of Theorem \ref{L2theorem}. Note it is essential that we use these explicit formulas, because (as we saw in the last section) the existence of singular inner functions implies that root subgroup factorization is not unique in general. In particular $\chi=\chi_-+\chi_0+\chi_-$ has a Fourier series. The formula (\ref{chieqn}) immediately implies that $exp(-\chi_+)\in L^2$. The crux of the matter is to show that if $g\in VMO$ (or more generally $B_p^{1/p}$),
then the factors have the same smoothness property.

Suppose first that $\chi=0$. In this case
$$A(g)A(g)^*=A(k_1^*)A(k_2)A(k_2)^*A(k_1) $$
$$=1-B(k_1)B(k_1)^*-A(k_1^*)B(k_2)B(k_2)^*A(k_1) $$
This implies the following sum is a positive compact operator:
$$ B(k_1)B(k_1)^*+A(k_1^*)B(k_2)B(k_2)^*A(k_1) $$
Does this imply that the two summands have to be compact?

\begin{proposition}\label{compactness} Assume $A$ and $B$ are positive operators on a Hilbert space $H$.

(a) If $A+B$ is finite rank, then $A$ and $B$
are finite rank.

(b) If $A+B$ is compact (or Schatten p-class),
the $A$ and $B$ are compact (Schatten p-class, respectively).
\end{proposition}

\begin{proof} (a) For $x\in ker(A+B)$,
$$\langle Ax,x\rangle +\langle Bx,x\rangle=0$$
together with polarization, this implies that $\langle Ax,y\rangle=0$ for $x,y\in ker(A+B)$.
$ker(A+B)^{\perp}$ is finite dimensional. So the range of $A$ is contained in the finite dimensional
subspace
$$ker(A+B)^{\perp}+A(ker(A+B)^{\perp})$$
and similarly for $B$. This proves $A$ and $B$ are finite rank.

(b) Given $n$, let $K_n$ ($P_n$) denote the closed subspace (and the corresponding orthogonal projection) spanned by eigenvectors corresponding to eigenvalues
$ \lambda$ for $A+B$ with $\lambda<1/n$. $K_n$ is $A+B$ invariant and $|A+B|_{K_n}<1/n$. The orthogonal complement of $K_n$ is finite dimensional. Because $\langle Ax,x\rangle\le\langle(A+ B)x,x\rangle$ for $x\in K_n$ and $A$ is positive, the norm for $|P_nAP_n|<1/n$. Define $A_n=A-P_nAP_n$. This is a finite rank operator (its range is contained
in $K_n^{\perp}+A K_n^{\perp}$) and $|A_n-A|=|P_nAP_n|<1/n$. This shows that $A$ is a norm limit of finite rank operators. Hence $A$ is compact.

The Schatten p-class claim is done in the same way, using the Schatten p-norm.

\end{proof}

Thus if $\chi=0$, then $g\in VMO$ implies that $k_1,k_2\in VMO$.

Now consider the general case,
$$g=k_1^*e^{\chi h_1}k_2=\left(\begin{matrix}a_1^*e^{\chi}d_2^*-b_1e^{-\chi}c_2&  -a_1^*e^{\chi}c_2^*-b_1e^{-\chi}d_2\\ b_1^*e^{\chi}d_2^*+a_1e^{-\chi}c_2  & -b_1^*e^{\chi}c_2^*+a_1e^{-\chi}d_2 \end{matrix}\right) $$

The following is a basic gap in this section, and we will simply assume its truth.

\begin{lemma}\label{deformation} There exists a deformation
$\chi_t:S^1\to i\mathbb R$ with $\chi|_{t=0}=\chi$, $\chi_t\in VMO(S^1,i\mathbb R)$ and
and $g_{t}:=k_1^*e^{\chi_t h_1}k_2 \in VMO(S^1,SU(2))$, for some $t>0$.
\end{lemma}

\begin{remark} By definition $\chi$ has a linear triangular factorization, $\chi=\chi_-+\chi_0+\chi_+$ (with $\chi_-=-\chi_+$ and $\chi_0\in i\mathbb R$), and by assumption $exp(-\chi_+)\in L^2$, i.e. $exp(-H(\chi))$ is integrable, where
$H(\chi)=\chi_++\chi_+^*):S^1 \to \mathbb R$ is the Hilbert transform of $\chi$.  The upshot of this is
that we can easily deform $\chi$ so that it is smooth (or simply $VMO$). The difficult point is to guarantee that for some $t$, $g_t$ will be $VMO$.

Consider the operator identities
$$B(k_1^*e^{\chi}k_2)=A(k_1^*)A(e^{\chi})B(k_2)+A(k_1^*)B(e^{\chi})D(k_2)
+B(k_1^*)C(e^{\chi})B(k_2)+B(k_1^*)D(e^{\chi})C(k_2)$$
and
$$C(k_1^*e^{\chi}k_2)=C(k_1^*)A(e^{\chi})A(k_2)+C(k_1^*)B(e^{\chi})C(k_2)
+D(k_1^*)C(e^{\chi})A(k_2)+D(k_1^*)D(e^{\chi})C(k_2)$$
When we add in the deformation, the second and third factors will be compact. The crux of the matter is to choose the deformation
so that the sum of the first and fourth factors is compact.

\end{remark}

Lemma \ref{Toeplitzfactorization} and some algebraic manipulations imply the following
lemma (without the use of Lemma \ref{Toeplitzfactorization}, the expression in the following lemma would have eight terms).

\begin{lemma}$A(g_t)$ equals the sum of four terms
$$A(k_1^*)A(e^{\chi_t h_1})A(k_2)+B(k_1^*)C(e^{\chi_t h_1})A(k_2)$$
$$+A(k_1^*)B(e^{\chi_t h_1})C(k_2)
+B(k_1^*)C(e^{\chi_t h_1})A(e^{-\chi_t h_1})B(e^{\chi_t h_1})C(k_2)$$
The last three terms are compact for $t>0$.
\end{lemma}

This implies that $A(g_t)A(g_t^{-1})$ will be the sum of 16 terms

$$=A(k_1^*)A(e^{\chi_t h_1})A(k_2)A(k_2^*)A(e^{-\chi_t h_1})A(k_1)+
A(k_1^*)A(e^{\chi_t h_1})A(k_2)A(k_2)^*B(e^{-\chi_t h_1})C(k_1)$$
$$+A(k_1^*)A(e^{\chi_t h_1})A(k_2)B(k_2^*)C(e^{-\chi_t h_1})A(k_1)
+A(k_1^*)A(e^{\chi_t h_1})A(k_2)B(k_2^*)C(e^{-\chi_t h_1})A(e^{\chi_t h_1})B(e^{-\chi_t h_1})C(k_1)$$
$$+B(k_1^*)C(e^{\chi_t h_1})A(k_2)A(k_2^*)A(e^{-\chi_t h_1})A(k_1)+
B(k_1^*)C(e^{\chi_t h_1})A(k_2)A(k_2)^*B(e^{-\chi_t h_1})C(k_1)$$
$$+B(k_1^*)C(e^{\chi_t h_1})A(k_2)B(k_2^*)C(e^{-\chi_t h_1})A(k_1)
+B(k_1^*)C(e^{\chi_t h_1})A(k_2)B(k_2^*)C(e^{-\chi_t h_1})A(e^{\chi_t h_1})B(e^{-\chi_t h_1})C(k_1)$$
$$+A(k_1^*)B(e^{\chi_t h_1})C(k_2)A(k_2^*)A(e^{-\chi_t h_1})A(k_1)+A(k_1^*)B(e^{\chi_t h_1})C(k_2)A(k_2)^*B(e^{-\chi_t h_1})C(k_1)$$
$$+A(k_1^*)B(e^{\chi_t h_1})C(k_2)B(k_2^*)C(e^{-\chi_t h_1})A(k_1)
+A(k_1^*)B(e^{\chi_t h_1})C(k_2)B(k_2^*)C(e^{-\chi_t h_1})A(e^{\chi_t h_1})B(e^{-\chi_t h_1})C(k_1)$$
$$+B(k_1^*)C(e^{\chi_t h_1})B(e^{\chi_t h_1})C(k_2)A(k_2^*)A(e^{-\chi_t h_1})A(k_1)+
B(k_1^*)C(e^{\chi_t h_1})B(e^{\chi_t h_1})C(k_2)A(k_2)^*B(e^{-\chi_t h_1})C(k_1)$$
$$+B(k_1^*)C(e^{\chi_t h_1})B(e^{\chi_t h_1})C(k_2)B(k_2^*)C(e^{-\chi_t h_1})A(k_1)$$
$$+B(k_1^*)C(e^{\chi_t h_1})B(e^{\chi_t h_1})C(k_2)B(k_2^*)C(e^{-\chi_t h_1})A(e^{\chi_t h_1})B(e^{-\chi_t h_1})C(k_1)  $$

For $t>0$ all of the terms, with the exception of the first, are compact, because $e^{\chi_t}$ is $VMO$.
The first term is

$$   A(k_1^*)A(e^{\chi_t h_1})A(k_2)A(k_2^*)A(e^{-\chi_t h_1})A(k_1)
=A(k_1^*)A(e^{\chi_t h_1})(1-B(k_2)B(k_2)^*)A(e^{-\chi_t h_1})A(k_1)=$$
$$A(k_1^*)(1-B(e^{\chi_t h_1}B(e^{\chi_t h_1})^*)A(k_1)-A(k_1^*)A(e^{\chi_t h_1})B(k_2)B(k_2)^*A(e^{-\chi_t h_1})A(k_1)=$$
$$1-B(k_1^*)B(k_1^*)^*-A(k_1^*)B(e^{\chi_t h_1}B(e^{\chi_t h_1})^*)A(k_1)-A(k_1^*)A(e^{\chi_t h_1})B(k_2)B(k_2)^*A(e^{-\chi_t h_1})A(k_1)$$
Since $B(g_t)B(g_t)^*$ is positive, this implies that
$$B(k_1^*)B(k_1^*)^*+A(k_1^*)B(e^{\chi_t h_1}B(e^{\chi_t h_1})^*)A(k_1)+A(k_1^*)A(e^{\chi_t h_1})B(k_2)B(k_2)^*A(e^{-\chi_t h_1})A(k_1)$$
is positive. Proposition \ref{compactness} now implies that $B(k_1)$ and $B(k_2)$ are compact, hence $k_1$ and $k_2$
are VMO. This now implies that $e^{\chi}$ is VMO. Lemma \ref{abelian} implies that $\chi$ is VMO. This completes the proof of the theorem.
\end{proof}

\subsection{Coordinates for $VMO(S^1,SU(2))$}\label{VMOII}

Theorem \ref{introtheorem6} implies the following

\begin{corollary} $VMO(S^1,SU(2))$ is a topological manifold, where $(y,\chi,x)$ is a topological coordinate system for the open set of loops in $VMO(S^1,SU(2))$ with invertible $A$ and $A_1$.
\end{corollary}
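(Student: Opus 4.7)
The plan is to obtain the corollary as a direct corollary of Conjecture \ref{introtheorem6} combined with Theorem \ref{introtheorem5}, with the topological content extracted from Proposition \ref{opertopology}(c). Let $V\subset VMO(S^1,SU(2))$ denote the set of $g$ with $A(g)$ and $A_1(g)$ invertible. Openness of $V$ is already recorded in Corollary \ref{VMOinvert_ops} (applied to both $A$ and $A_1$). Granting Conjecture \ref{introtheorem6}, each $g\in V$ has a unique root subgroup factorization $g=k_1(\eta)^*\bigl(\begin{smallmatrix}e^{\chi}&0\\0&e^{-\chi}\end{smallmatrix}\bigr)k_2(\zeta)$, with $k_i\in VMO$ and $\chi\in VMO(S^1,i\mathbb R)$; and Theorem \ref{introtheorem5}(b) upgrades this to bijections $k_1\leftrightarrow y\in VMOA$, $k_2\leftrightarrow x\in VMOA$. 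Splicing these gives a set-theoretic bijection
$$\Phi:U\;\longrightarrow\;V,\qquad U\subset VMOA\times VMO(S^1,i\mathbb R)\times VMOA$$
with $U$ open (since $V$ is open and $\Phi$ is continuous, as explained next).

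For continuity of $\Phi$, I would use the homeomorphism $VMO(S^1,SU(2))\to U_{(\mathcal L_\infty)}$ of Proposition \ref{opertopology}(c) and the fact that multiplication is continuous on any restricted unitary group in the Polish topology. It then suffices to check that each of the three factor maps $y\mapsto k_1$, $x\mapsto k_2$, and $\chi\mapsto \mathrm{diag}(e^{\chi},e^{-\chi})$ is continuous into $U_{(\mathcal L_\infty)}$. For the first two, continuity follows from Theorem \ref{introtheorem5} together with the explicit operator-theoretic formulas of Theorem \ref{keyidentities1}, which express $\mathbf a_2,\gamma_2,\delta_2$ (and symmetrically $\mathbf a_1,\alpha_1,\beta_1$) as compact-operator-continuous functions of the Hankel symbols. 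For the third, continuity follows from the pointwise Lipschitz estimate $|e^{i s}-e^{i t}|\le|s-t|$, which, together with the VMO definition via the BMO seminorm and Hartman's theorem, yields $\mathrm{diag}(e^{\chi_n},e^{-\chi_n})\to \mathrm{diag}(e^{\chi},e^{-\chi})$ in $U_{(\mathcal L_\infty)}$ whenever $\chi_n\to\chi$ in VMO.

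For continuity of $\Phi^{-1}$ I would recover the coordinates operator-theoretically from $g$. Since $g\mapsto A(g)$ is continuous into the compact-perturbation-of-unitary algebra and inversion is continuous on invertibles, the formulas in Proposition \ref{trifactorization} (and the $L^2$ analogues in the proof of Theorem \ref{L2theorem}, notably (\ref{chieqn}) and (\ref{1})--(\ref{2})) yield continuous recovery of $\mathbf a_1,\mathbf a_2$, of the first column of $l$ and the second row of $u$, and hence of $k_1,k_2$. The factor $\chi$ is then extracted from the unimodular quotients $l_{11}/\alpha_1^{*}$ and $u_{22}/\delta_2$, which are unitary VMO functions of Toeplitz index zero; one produces $\chi$ by applying the VMO logarithm of Lemma \ref{abelian}. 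Finally, $y$ and $x$ are recovered from $k_1,k_2$ via Theorem \ref{introtheorem5}(b), which is already continuous by the identity $(1+\dot B(x)\dot B(x)^{*})^{-1}=A(k_2)A(k_2^{-1})$ and its $k_1$ analogue.

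The main obstacle is not in this corollary itself but upstream: the whole argument is conditional on Conjecture \ref{introtheorem6}, whose outstanding gap is the passage from compactness of the positive sum $B(k_1^*e^{\chi_-})C(e^{-\chi_-}k_1)+A(k_1^*e^{\chi_-})B(e^{\chi_+}k_2)C(k_2^{*}e^{-\chi_+})A(e^{-\chi_-}k_1)$ (documented at the end of the proof of Conjecture \ref{introtheorem6}) to compactness of each summand. Once the factorization and its smoothness preservation are in place, the topological assertions above are essentially formal. A secondary technical point, which I expect to be routine but not trivial, is verifying the VMO continuity of the exponential $\chi\mapsto e^\chi$ (using Hartman's theorem to translate the pointwise Lipschitz bound into convergence of the Hankel operators in operator norm), and verifying that the VMO logarithm in Lemma \ref{abelian} is continuous on the index-zero component.
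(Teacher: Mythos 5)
Your proposal correctly identifies that this statement is explicitly conditional: the paper prefaces it with ``If we could complete the proof of conjecture \ref{introtheorem6}, this would essentially imply the following conjectural'' corollary, and gives no further details. You have correctly located the unresolved obstruction upstream (passing from compactness of the positive sum at the end of the proof of Conjecture \ref{introtheorem6} to compactness of each summand), and your reconstruction of the intended deduction --- splicing Theorem \ref{introtheorem5}(b) with the factorization to obtain a set-theoretic bijection, then using Proposition \ref{opertopology}(c), Corollary \ref{VMOinvert_ops}, Theorem \ref{keyidentities1}, and Proposition \ref{trifactorization} to establish continuity of $\Phi$ and $\Phi^{-1}$ --- is a sensible and more explicit version of what the authors evidently have in mind, parallel to the $W^{1/2}$ case (Theorem \ref{xycoordinate}).

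Two small points worth tightening. First, openness of $U$ does not follow from openness of $V$ together with continuity of $\Phi$ on $U$ alone; you need $\Phi$ to be continuously defined on a neighborhood of $U$ inside $VMOA\times VMO(S^1,i\mathbb R)\times VMOA$ (with values in $VMO(S^1,SU(2))$), so that $U=\Phi^{-1}(V)$. This extension exists since the three factor maps $y\mapsto k_1$, $\chi\mapsto\mathrm{diag}(e^\chi,e^{-\chi})$, $x\mapsto k_2$ are defined on the full domain, but the logic should be phrased that way. Second, the continuity of $\chi\mapsto e^\chi$ into $VMO(S^1,S^1)$ is genuinely not a one-liner: the pointwise Lipschitz bound controls the oscillatory part of the BMO seminorm, but the paper's BMO norm has an additional $|\widehat f(0)|$ term, and converting BMO convergence of $e^{\chi_n}$ into convergence of the associated Hankel operators in operator norm (to land in $U_{(\mathcal L_\infty)}$) requires invoking Hartman's theorem for the differences. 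You flag this as routine but not trivial, which is a fair assessment, but it is the one place in your outline that would need a real paragraph of justification rather than a citation.
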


\subsection{Lower Strata}

As in the $W^{1/2}$ case one can use finite codimensional conditioning to parameterize the lower strata.
There are two cosmetic changes. First one cannot use the root subgroup coordinates $\eta,\chi,\zeta$, except in the $W^{1/2}$ case. Secondly instead of the Hilbert-Schmidt Grassmannian (and flag space), one uses the Grassmannians corresponding to the p-Schatten class ideals. Otherwise the statements of Theorems \ref{W12lowerstrata} and \ref{W12homotopy} remain valid for VMO loops. In particular the inclusions
$$L_{fin}SU(2) \to C^{\infty}(S^1,SU(2)) \to VMO(S^1,SU(2))$$ are homotopy equivalences.

\section{Appendix: Combinatorial Formulas}

In this appendix we will recall how, given $k_2$, one solves for the $zeta$ variables, and how one expresses
$x_1$ in terms of $\zeta$. The formulas we will
discuss are an incremental improvement of, but not a final word on, those appearing in \cite{BP}, where we considered root subgroup factorization for loops into $SL(2,\mathbb C)$. In this appendix we are considering the complex case.

We briefly recall the basic background. Let $L_{fin}SL(2,\mathbb C)$ denote the group
consisting of functions $S^1 \to SL(2,\mathbb C)$ having finite Fourier series, with pointwise
multiplication. For example suppose that $\zeta=(\zeta^-,\zeta^+) \in \mathbb C^2$, $\zeta^-\zeta^+\ne 1$, $n\in
\mathbb N$, and choose a square root
$$\mathbf a(\zeta):= (1+\zeta^-\zeta^+)^{-1/2}$$
Then the function
\begin{equation}\label{basicexample} S^1 \to SL(2,\mathbb C):z \to
\mathbf a(\zeta) \left(\begin{matrix} 1&
-\zeta^- z^{-n}\\
\zeta^+ z^n&1\end{matrix} \right)\end{equation}
is in $L_{fin}SL(2,\mathbb C)$.

The following is from \cite{BP}, with one slight change: we have inserted
a minus sign in front of $\zeta^-$, as in (\ref{basicexample}), in order to eliminate a profusion of signs
in some combinatorial formulas.

\begin{theorem}\label{SU(2)theorem1} Suppose that $g_1 \in L_{fin}SL(2,\mathbb C)$. Consider the following three conditions:

(I.1) $g_1$ is of the form
$$g_1(z)=\left(\begin{matrix} a_1(z)&b_1(z)\\
c_1^*(z)&d_1^*(z)\end{matrix} \right),\quad z\in S^1,$$ where $a_1,b_1,c_1$ and
$d_1$ are polynomials in $z$, and $a_1(0)=d_1^*(\infty)> 0$.

(I.2) $g_1$ has a factorization of the form
$$g_1(z)=\mathbf a(\eta_n)\left(\begin{matrix} 1& \eta^+_nz^n\\
\eta^-_nz^{-n}&1\end{matrix} \right)..\mathbf a(\eta_0)\left(\begin{matrix} 1&
\eta^+_0\\
\eta^-_0&1\end{matrix} \right),$$ for some finite subset
$\{\eta_0,..,\eta_n\} \subset \{\eta\in\mathbb C^2:1-\eta^-\eta^+\ne 0\}$.

(I.3) $g_1$ and $g_1^{-*}$ have triangular factorizations of the form
$$\left(\begin{matrix} 1&0\\
\sum_{j=0}^n \bar y_jz^{-j}&1\end{matrix} \right)\left(\begin{matrix} \mathbf a_1&0\\
0&\mathbf a_1^{-1}\end{matrix} \right)\left(\begin{matrix} \alpha_1 (z)&\beta_1 (z)\\
\gamma_1 (z)&\delta_1 (z)\end{matrix} \right),$$ where
the third factor is a polynomial in $z$ which is unipotent upper
triangular at $z=0$, and $\mathbf a_1$ is a positive constant.

(I.1) and (I.3) are equivalent. (I.2) implies (I.1) and (I.3). The converse holds generically, in the sense
that the $\eta$ variables are rational functions of the Laurent coefficients
for $b_1/a_1$ and $c_1^*/d_1^*$.

Similarly, for $g_2 \in L_{fin}SL(2,\mathbb C)$, consider the following three conditions:

(II.1) $g_2$ is of the form
$$g_2(z)=\left(\begin{matrix} a^*_2(z)&b^*_2(z)\\
c_2(z)&d_2(z)\end{matrix} \right),\quad z\in S^1,$$ where $a_2,b_2,c_2$ and $d_2$ are polynomials
in $z$, $c_2(0)=b_2(0)=0$, and $a_2^*(\infty)=d_2(0)> 0$.

(II.2) $g_2$ has a factorization of the form
$$g_2(z)=\mathbf a(\zeta_n)\left(\begin{matrix} 1&-\zeta_n^-z^{-n}\\
\zeta_n^+z^n&1\end{matrix} \right)..\mathbf a(\zeta_1)\left(\begin{matrix} 1&
-\zeta_1^-z^{-1}\\
\zeta_1^+z&1\end{matrix} \right),$$ for some finite subset
$\{\zeta_1,..,\zeta_n\} \subset \{\zeta\in\mathbb C^2:1-\zeta^-\zeta^+\ne 0\}$.

(II.3) $g_2$ and $g_2^{-*}$ have triangular factorizations of the form
$$\left(\begin{matrix} 1&\sum_{j=1}^n \bar x_jz^{-j}\\
0&1\end{matrix} \right)\left(\begin{matrix}\mathbf a_2&0\\0&\mathbf a_2^{-1}\end{matrix}\right)\left(\begin{matrix} \alpha_2 (z)&\beta_2 (z)\\
\gamma_2 (z)&\delta_2 (z)\end{matrix} \right), $$ where the third factor is a polynomial in $z$ which is unipotent
upper triangular at $z=0$, and $\mathbf a_2$ is a positive constant.

(II.1) and (II.3) are equivalent. (II.2) implies (II.1) and (II.3). The converse holds generically, in the sense
that the $\zeta$ variables are rational functions of the Laurent coefficients for $c_2/d_2$ and $b_2^*/a_2^*$.

\end{theorem}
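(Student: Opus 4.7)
The plan is to establish the three equivalences through a mix of direct algebraic induction and Wiener-Hopf-style inversion of linear systems, working entirely within the ring of Laurent polynomials in $z$ so that every step is finite-dimensional. I would begin with $(I.2) \Rightarrow (I.1)$ by induction on the number of root subgroup factors. The base case is immediate from the form of $\mathbf{a}(\eta_0) \left(\begin{smallmatrix} 1 & \eta_0^+ \\ \eta_0^- & 1 \end{smallmatrix}\right)$, and in the inductive step the key observation is that left-multiplication by a factor $\mathbf{a}(\eta_{k+1}) \left(\begin{smallmatrix} 1 & \eta_{k+1}^+ z^{k+1} \\ \eta_{k+1}^- z^{-(k+1)} & 1 \end{smallmatrix}\right)$ preserves the form: the old bottom row (polynomial in $z^{-1}$ of degree $\le k$) is shifted by $z^{k+1}$ into the new top row, landing in non-negative degrees, and symmetrically in reverse. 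Positivity $a_1(0) = d_1^*(\infty) > 0$ is preserved since both sides equal the product of the positive reals $\mathbf{a}(\eta_j)$. The analogous induction handles $(II.2) \Rightarrow (II.1)$.

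For the equivalence $(I.1) \Leftrightarrow (I.3)$, the direction $(I.1) \Rightarrow (I.3)$ is explicit: given $g_1$ of form (I.1), set $\mathbf{a}_1 := a_1(0)$, $\alpha_1 := a_1/\mathbf{a}_1$, $\beta_1 := b_1/\mathbf{a}_1$, and solve the linear system $(y^* \alpha_1)_{-0} = c_1^*/\mathbf{a}_1$ for $y^* = \sum_{j=0}^{\deg c_1} \bar y_j z^{-j}$. Because $\alpha_1(0) = 1$, this is a triangular system solved top-down for $\bar y_{\deg c_1}, \ldots, \bar y_0$. Defining $\gamma_1 := -\mathbf{a}_1^2 (y^* \alpha_1)_+$ and $\delta_1 := \mathbf{a}_1 (d_1^* - y^* b_1)$, the determinant identity $a_1 d_1^* - b_1 c_1^* = 1$ rearranges to $a_1 (d_1^* - y^* b_1) = 1 + b_1 \gamma_1/\mathbf{a}_1$, and since the right side is polynomial while $a_1(0) \neq 0$, a degree-by-degree cancellation starting from the most negative Fourier coefficient forces $d_1^* - y^* b_1$ to be polynomial with constant term $1/\mathbf{a}_1$. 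Applying the same construction to $g_1^{-*} = \left(\begin{smallmatrix} d_1 & -c_1 \\ -b_1^* & a_1^* \end{smallmatrix}\right)$, which manifestly has form (I.1), yields the factorization of $g_1^{-*}$. For $(I.3) \Rightarrow (I.1)$, I would multiply out $l_1 \cdot (ma) \cdot u_1$; only the top row is automatically of the required form, and the cancellation of the positive Fourier part of the bottom row is equivalent to the identities $\gamma_1 = -\mathbf{a}_1^2 (y^* \alpha_1)_+$ and $\delta_1 - 1 = -\mathbf{a}_1^2 (y^* \beta_1)_+$. These come from (I.3) applied to $g_1^{-*}$: writing $g_1^{-*} = (l_1^{-1})^* (ma)^{-*} (u_1^{-1})^*$ and demanding its top row be polynomial in $z$, then applying the involution $(\cdot)^*$ which interchanges positive and negative Fourier parts, yields precisely those identities.

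For the generic $(I.1) \Rightarrow (I.2)$, I would invert the recursion encoded in \eqref{Taylorexp}: the Taylor expansions of $b_1/a_1$ near $z=0$ and of $c_1^*/d_1^*$ near $z=\infty$ determine $\eta_0, \eta_1, \ldots$ iteratively, with each $\eta_j$ a rational function of the preceding data whose denominators are products of $(1 + \eta_k^- \eta_k^+)$ for already-computed indices. The recursion terminates because $g_1 \in L_{fin}SL(2,\mathbb{C})$ has bounded Fourier degree, and the generic qualification is precisely the nonvanishing of these denominators. I expect the main obstacle to be the bookkeeping in $(I.3) \Rightarrow (I.1)$: the argument requires simultaneously tracking two triangular factorizations (of $g_1$ and $g_1^{-*}$) and translating top-row constraints of one into bottom-row cancellations of the other via the $(\cdot)^*$ involution, which swaps $(\cdot)_+$ and $(\cdot)_-$ and requires careful accounting of constant terms in each Fourier expansion. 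The mechanics are elementary but the signs and the distinction between $(\cdot)_-$ and $(\cdot)_{-0}$ are where errors are easy to make.
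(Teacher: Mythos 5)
The paper does not prove Theorem~\ref{SU(2)theorem1} at all: it is stated as an import from the prequel \cite{BP}, with a sign adjustment, so there is no in-paper proof to compare against. Your proposal is therefore a reconstruction from scratch, and its overall architecture (induction for $(I.2)\Rightarrow(I.1)$; explicit solution of the $(\cdot)_{-0}$-projected linear system for $(I.1)\Rightarrow(I.3)$; the $(\cdot)^{-*}$ involution for the reverse; generic inversion of the Taylor recursion for $(I.1)\Rightarrow(I.2)$) is sound and matches the way these factorizations are manipulated elsewhere in the paper. The identification $\alpha_1=a_1/\mathbf a_1$, $\beta_1=b_1/\mathbf a_1$, $\gamma_1=-\mathbf a_1^2(y^*\alpha_1)_+$, and the use of $a_1d_1^*-b_1c_1^*=1$ to force $\delta_1$ to be polynomial with $\delta_1(0)=1$, all check out.

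Two points, however, are not fully addressed. First, in $(I.2)\Rightarrow(I.1)$ you assert positivity is ``preserved since both sides equal the product of the positive reals $\mathbf a(\eta_j)$''; but for $L_{fin}SL(2,\mathbb C)$ the quantity $\mathbf a(\eta)=(1+\eta^-\eta^+)^{-1/2}$ is a \emph{chosen} square root of a complex number and need not be real, let alone positive. What your induction actually gives is the identity $a_1(0)=d_1^*(\infty)=\prod_j\mathbf a(\eta_j)$; the positivity in (I.1) is a normalization of the branch, not a consequence, and should be stated as such. Second, and more seriously, your argument for $(I.3)\Rightarrow(I.1)$ only establishes that the bottom row of $g_1$ is a polynomial in $z^{-1}$; it does not establish the normalization $a_1(0)=d_1^*(\infty)$. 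From the factorization of $g_1$ one gets $a_1(0)=\mathbf a_1$, while from the factorization of $g_1^{-*}$ one gets $d_1^*(\infty)=\overline{(g_1^{-*})_{11}(0)}=\mathbf a_1'$, the constant in the second factorization, and these are not automatically equal (the constant diagonal loop $\mathrm{diag}(2,1/2)$ satisfies (I.3) with two distinct constants but violates the equality in (I.1)). Either one must read (I.3) as requiring the \emph{same} positive constant $\mathbf a_1$ for both $g_1$ and $g_1^{-*}$ --- in which case $a_1(0)=d_1^*(\infty)$ is immediate and should be stated --- or one must supply an additional argument for that equality; your write-up does neither, and this is a genuine gap.
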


We will focus on the second set of equivalences. Given $g_2$ to find the "root subgroup coordinates"
$\zeta^{\pm}$, one needs the Taylor series expansions for $c_2(g_2)/d_2(g_2)$ and for $c_2(g_2^{-*})/d_2(g_2^{-*})=
-b_2(g_2)/a_2(g_2)$ (In the special cases when $g_2$ has values in $SU(2)$ or $SU(1,1)$, these two functions are essentially the same). We also want to find the Taylor expansion for $x(z)$, expressed in terms of $\zeta$.

\begin{proposition} In reference to $g_2$ in II.1-3,

(a)
$$(c_2(g_2)/d_2(g_2))(z)=(\gamma_2/\delta_2)(z)=\sum_{n=1}^{\infty}\xi_n z^n, \text{ where } \xi_n=\sum_{k=1}^{n} \zeta_{k}^+\prod_{j=0}^{k-1}(1+\zeta_j^-\zeta_j^+)p_{n,k}(\zeta_1^{\pm},..,\zeta_{k}^{\pm}) $$
$p_{1,1}=\zeta^+_1$ and for $n>1$
$$p_{n,k}=\sum c_{ij}\left(\zeta^-_{j_1}(\zeta^+_{i_1})\right)...\left(\zeta^-_{j_r}(\zeta^+_{i_r})\right) $$
where $1\le j_s<i_s\le k$ for $s=1,..,r$, $\sum_{s=1}^{r}(i_s- j_s)=n-k$, and $c_{ij}$ is a positive integer; in particular
$$\xi_n=(\zeta^+_n)\prod_{s=1}^{n-1}(1+\zeta_s^-\zeta_s^+)+polynomial(\zeta^-_s,\zeta_s^+,s<n)$$

(b)
$$x^*(z)=(\gamma_2^*/\delta_2)(z)=\sum_{n=1}^{\infty}x_1^*(\zeta_n,\zeta_{n+1},...)z^{-n} $$
where
$$x_1^*(\zeta_1^{\pm},\zeta_2^{\pm},...)=\sum_{m=1}^{\infty} \zeta_m\prod_{j=m+1}^{\infty}(1+\zeta_j^-\zeta_j^+)s_{m}(\zeta_m^{\pm},\zeta_{m+1}^{\pm},...) $$
and
$$s_{m}=
\sum C_{ij}\left(\zeta^-_{j_1}(\zeta^+_{i_1})\right)...\left(\zeta^-_{j_r}(\zeta^+_{i_r})\right) $$
where $1\le j_s<i_s\le k$ for $s=1,..,r$, $\sum_{s=1}^{r}(i_s- j_s)=n-k$, and $C_{ij}$ is a positive integer.

\end{proposition}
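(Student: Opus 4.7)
The plan is to prove both (a) and (b) by a direct recursive expansion of the matrix product, organized so that each step of the recursion produces one additional layer of the claimed combinatorial structure. Define the partial products
$$g_2^{(N)} \;=\; M_N M_{N-1}\cdots M_1,\qquad M_k \;=\; \mathbf{a}(\zeta_k)\begin{pmatrix} 1 & -\zeta_k^{-} z^{-k} \\ \zeta_k^{+} z^{k} & 1 \end{pmatrix},$$
and write $g_2^{(N)} = \left(\begin{smallmatrix} a_2^{(N)*} & b_2^{(N)*} \\ c_2^{(N)} & d_2^{(N)} \end{smallmatrix}\right)$. The recursion $g_2^{(N+1)} = M_{N+1}\, g_2^{(N)}$ yields the two-term formulas $c_2^{(N+1)} = \mathbf{a}(\zeta_{N+1})(c_2^{(N)} + \zeta_{N+1}^{+} z^{N+1} a_2^{(N)*})$ and $d_2^{(N+1)} = \mathbf{a}(\zeta_{N+1})(d_2^{(N)} + \zeta_{N+1}^{+} z^{N+1} b_2^{(N)*})$, together with analogous formulas for $a_2^{(N+1)*}$ and $b_2^{(N+1)*}$ involving $-\zeta_{N+1}^{-} z^{-(N+1)}$.

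Interpreting the matrix product as a weighted sum over two-state walks $\sigma:\{0,1,\ldots,N\}\to\{1,2\}$ gives the following picture, which drives both parts: each entry of $g_2^{(N)}$ equals $\prod_k \mathbf{a}(\zeta_k)$ multiplied by the sum over walks with the appropriate endpoints, where a $1\to 2$ transition at step $k$ contributes $\zeta_k^{+} z^{k}$, a $2\to 1$ transition contributes $-\zeta_k^{-} z^{-k}$, and a trivial step contributes $1$. The entry $c_2^{(N)}$ picks out walks $\sigma(0)=1,\sigma(N)=2$, while $d_2^{(N)}$ corresponds to $\sigma(0)=2,\sigma(N)=2$. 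For part (a), note that $d_2(0)=\prod_k \mathbf{a}(\zeta_k)\neq 0$, so the ratio $c_2/d_2$ is well defined as a formal power series. One parameterizes each walk contributing to the coefficient of $z^n$ in $c_2/d_2$ by its \emph{last} $1\to 2$ transition, say at position $k$. Steps with index larger than $k$ contribute only the pure $\mathbf{a}(\zeta_j)$ normalizations, which recombine with the $d_2$ denominator into the prefactor $\prod_{j<k}(1+\zeta_j^{-}\zeta_j^{+})$. The remaining sub-walk on steps $\le k$ ending in a single $1\to 2$ transition at $k$ is exactly enumerated by the descent-ascent sequences $(i_1 < \cdots < i_r)$ and $(j_1 < \cdots < j_r)$ with $j_s < i_s \le k$ and $\sum(i_s-j_s)=n-k$ of the statement, and the polynomial $p_{n,k}$ is the generating function of these sub-walks. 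Positivity of the coefficients $c_{ij}$ is automatic because the two minus signs from each pair $(-\zeta^{-})(\zeta^{+})$ cancel and only positive coefficients survive.

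Part (b) is analogous but requires an extra preliminary step: extracting $x^*$ from the triangular factorization. From $g_2 = \left(\begin{smallmatrix}1&x^*\\0&1\end{smallmatrix}\right)\left(\begin{smallmatrix}\mathbf{a}_2&\\&\mathbf{a}_2^{-1}\end{smallmatrix}\right)\left(\begin{smallmatrix}\alpha_2&\beta_2\\\gamma_2&\delta_2\end{smallmatrix}\right)$ one reads off $c_2 = \mathbf{a}_2^{-1}\gamma_2$, $d_2 = \mathbf{a}_2^{-1}\delta_2$, and $a_2^* = \mathbf{a}_2\alpha_2 + x^* c_2$, so that $x^*(z)$ is the negative-mode part of $a_2^*(z)/c_2(z)$, which after unitarity simplifications agrees with the $(\cdot)_-$ projection of $\gamma_2^*/\delta_2$. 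Expanding $\gamma_2^*/\delta_2$ using the same walk enumeration (now applied to $\gamma_2^*$, which runs over walks ending in state $1$ rather than state $2$) and extracting the coefficient of $z^{-n}$ produces the formula $x_1^*(\zeta_m^{\pm},\ldots) = \sum_{m\ge 1}\zeta_m^{+}\prod_{j>m}(1+\zeta_j^{-}\zeta_j^{+})\, s_m$, where $s_m$ is the generating function for the internal descent-ascent pattern on indices $\ge m$. Because the outermost position $m$ is the largest index at which the walk is in state $2$ just before dropping to state $1$, all subsequent steps produce only $\mathbf{a}(\zeta_j)$ factors, giving the product $\prod_{j>m}(1+\zeta_j^{-}\zeta_j^{+})$.

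The hard part will be the combinatorial verification in part (b): showing that the integer coefficients $C_{ij}$ are strictly positive, i.e. that the formal walk count never cancels. In part (a) this is clean because each contributing walk has an even number of $(-\zeta^{-})$ factors that pair off with the $\zeta^{+}$ factors. In part (b), however, the projection onto the negative modes of $\gamma_2^*/\delta_2$ involves inverting $\delta_2$ and can in principle produce cancellations between distinct walks. I would therefore spend most of the effort constructing an explicit bijection between walks contributing to the coefficient of $z^{-n}$ in $\gamma_2^*/\delta_2$ and the index tuples $(i_1,\ldots,i_r;j_1,\ldots,j_r)$ satisfying the constraints (\ref{index}), arguing that distinct walks cannot produce the same monomial in the $\zeta^{\pm}$, so that the integer coefficient $C_{ij}$ counting them is necessarily positive. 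This is the step where, in \cite{BP}, the bookkeeping became complicated; a clean reformulation in terms of non-crossing or nested descent-ascent sequences should make the positivity transparent.
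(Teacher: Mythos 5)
The paper does not actually prove this Proposition: it states the formulas, says they ``are a better version of those appearing in \cite{BP}'', and explicitly remarks afterward that computing the integers $c_{ij}$ and $C_{ij}$ ``remains a vexing problem.'' So there is no in-paper proof against which to compare; what can be assessed is whether your proposal would close the gap, and as written it would not.

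The walk expansion you set up is the right starting point, and the recursion for $c_2^{(N+1)}, d_2^{(N+1)}$ is correct. But the two places where the proof actually has to do work are precisely the places you gloss over.

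First, in part (a) the assertion that ``steps with index larger than $k$ contribute only the pure $\mathbf{a}(\zeta_j)$ normalizations, which recombine with the $d_2$ denominator into the prefactor $\prod_{j<k}(1+\zeta_j^{-}\zeta_j^{+})$'' is not a proof and in fact does not match the index range: the prefactor in the statement runs over $j<k$, while the steps you are invoking have index $>k$. More seriously, the ratio $c_2/d_2 = \tilde c_2/\tilde d_2$ genuinely requires expanding $1/\tilde d_2$ as a geometric series, which introduces alternating signs that are \emph{not} of the same origin as the sign convention in $(-\zeta^-)$. Your remark that ``the two minus signs from each pair $(-\zeta^{-})(\zeta^{+})$ cancel'' resolves the bookkeeping for the numerator and denominator walk sums separately, but it does not address the alternating signs coming from inverting $\tilde d_2$. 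The nontrivial content of part (a) is exactly that after forming the quotient, all the resulting integer coefficients $c_{ij}$ are positive; you have not shown why the geometric-series cancellations always resolve favorably.

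Second, for part (b) the identity ``$x^*$ is the negative-mode part of $a_2^*/c_2$'' is not well-posed as stated: $c_2(0)=0$, so $a_2^*/c_2$ has a pole at the origin and does not have a clean Fourier splitting. The correct route is via the unitarity relation (\ref{4theqn}), which gives $x^*$ in terms of $\gamma_2^*,\delta_2^*$; you mention the ``unitarity simplifications'' but do not carry them out, and the reader cannot tell whether your walk interpretation of $\gamma_2^*/\delta_2$ is consistent with the triangular-factorization definition of $x^*$ (there is, at minimum, a sign to get right — a simple test with a single nonzero $\zeta_1$ does not obviously reproduce $x_1^*$ with the sign convention in the Proposition). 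Finally, you explicitly flag that positivity of $C_{ij}$ ``is the hard part'' and propose a bijection but do not construct one. Given the paper's own admission that even computing these integers is an open combinatorial problem, the bijection is precisely the missing content, not a finishing touch; as it stands the proposal reproduces the formal shape of the answer but does not establish the integrality or positivity claims, which are the point of the Proposition.
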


In (a) and (b) it is not known how to compute the positive integers $c_{ij}$ and $C_{ij}$, respectively.
It is easy to generate intriguing formulas, but pinning this down remains a vexing problem.

\section{Appendix: Is $\theta_+$ a Coordinate?}

There may be a more direct way to parameterize the top stratum of the quotient
$W^{1/2}(S^1,SU(2))/SU(2)$ which does not involve root subgroup coordinates.

Given $g\in L^{\infty}(S^1,SL(2,\mathbb C))$, the existence of a Riemann-Hilbert
factorization $g=g_-g_0g_+$ is equivalent to a matrix factorization of the
multiplication operator $M_g$,
\begin{equation}\left(\begin{matrix} A&B\\
C&D\end{matrix} \right)=\left(\begin{matrix} 1&0\\
Z&1\end{matrix} \right)\left(\begin{matrix} A&0\\
0&D-ZB\end{matrix} \right)\left(\begin{matrix} 1&W\\
0&1\end{matrix} \right).\label{ 4.6}\end{equation}
where $Z=CA^{-1}$ and $W=A^{-1}B$. The following is straightforward.

\begin{lemma}For $g$ having a Riemann-Hilbert factorization,
$Z(g)=Z(g_{-}g_0)=Z(g_-)$, $W(g)=W(g_0g_{+})=W(g_+)$ and
$$A(g^{-1})A(g)=(1+W(g)Z(g^{-1}))^{-1}$$
Thus for $g:S^1 \to SU(2)$,
$$det(A(g)A(g^{-1}))=det(1+W(g)W(g)^{*})^{-1}$$
and $g\in W^{1/2}$ iff $W(g)=W(g_+)$ is Hilbert-Schmidt.
\end{lemma}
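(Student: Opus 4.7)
The plan is to read off the block structure of $M_g$ from the Riemann--Hilbert factorization, extract the scalar identity from the $(1,1)$-block of $M_g M_{g^{-1}} = I$, and specialize using $g^{-1}=g^*$ for unitary $g$.

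For the equalities $Z(g)=Z(g_-)$ and $W(g)=W(g_+)$, the key observation is that multiplication by $g_+\in H^0(\Delta,SL(2,\mathbb C))$ preserves $H_+$, so $C(g_+)=0$; symmetrically $g_-\in H^0(\Delta^*,\infty)$ with $g_-(\infty)=1$ preserves $H_-$, so $B(g_-)=0$; and the constant $g_0$ is block diagonal. Expanding $M_g=M_{g_-}M_{g_0}M_{g_+}$ block by block gives $A(g)=A(g_-)g_0A(g_+)$, $B(g)=A(g_-)g_0B(g_+)$, $C(g)=C(g_-)g_0A(g_+)$. Since $g_\pm^{-1}$ are themselves holomorphic on the respective disks, $A(g_\pm)$ are bounded invertible, and cancellations in $Z=CA^{-1}$, $W=A^{-1}B$ yield the claim.

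The scalar identity comes from the $(1,1)$-block of $M_gM_{g^{-1}}=I$, which reads $A(g)A(g^{-1})+B(g)C(g^{-1})=1$. Writing $B(g)=A(g)W(g)$ and $C(g^{-1})=Z(g^{-1})A(g^{-1})$ converts this into
$$A(g)\bigl(1+W(g)Z(g^{-1})\bigr)A(g^{-1})=1,$$
from which $A(g^{-1})A(g)=(1+W(g)Z(g^{-1}))^{-1}$ follows after left-multiplying by $A(g)^{-1}$. For $g:S^1\to SU(2)$, unitarity gives $M_{g^{-1}}=M_g^*$, hence $C(g^{-1})=B(g)^*$ and $A(g^{-1})=A(g)^*$, so $Z(g^{-1})=B(g)^*(A(g)^*)^{-1}=W(g)^*$; taking Fredholm determinants (and using the cyclic identity $\det(ST)=\det(TS)$ when both products are of determinant class) then yields $\det(A(g)A(g^{-1}))=\det(1+W(g)W(g)^*)^{-1}$.

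For the $W^{1/2}$ characterization, Proposition \ref{opertopology}(b') identifies $W^{1/2}$-membership with $M_g\in U_{(\mathcal L_2)}$, i.e.\ with $B(g)$ (equivalently $C(g)$, since the unitarity relations $BB^*=1-AA^*$ and $C^*C=1-A^*A$ force $AA^*$ and $A^*A$ to share the same nonzero spectrum) being Hilbert--Schmidt. The boundedness and invertibility of $A(g)$ then make this equivalent to $W(g)=A(g)^{-1}B(g)$ being Hilbert--Schmidt. The most delicate point in the whole argument is ensuring that the Fredholm determinants in the preceding display are actually well defined; this requires $W(g)W(g)^*$ to be trace class, which is precisely the $W^{1/2}$ hypothesis, so the circle closes.
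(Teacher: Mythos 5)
Your proof is correct and fills in exactly the verification the paper declares ``straightforward'' immediately after displaying the matrix factorization of $M_g$, and it follows the route that display is pointing to: expand $M_g=M_{g_-}M_{g_0}M_{g_+}$ block by block using $B(g_-)=0$, $C(g_+)=0$ to get $A(g)=A(g_-)g_0A(g_+)$ etc., whence $Z(g)=Z(g_-)$ and $W(g)=W(g_+)$; read the $(1,1)$ block of $M_gM_{g^{-1}}=1$; then use $M_{g^{-1}}=M_g^*$ to turn $Z(g^{-1})$ into $W(g)^*$. One small cleanup worth making: when you invoke cyclicity of the Fredholm determinant to pass from $A(g^{-1})A(g)=(1+WW^*)^{-1}$ to $\det(A(g)A(g^{-1}))$, the cleanest version is $A(g)A(g^{-1})=1-B(g)B(g)^*$ and $A(g^{-1})A(g)=1-B(g)^*B(g)$, and then $\det(1-BB^*)=\det(1-B^*B)$ since the two compact operators have the same nonzero spectrum with multiplicities; neither $A(g)$ nor $A(g^{-1})$ is of determinant class, so the identity should be applied at this level rather than directly to the product $A(g)(1+WZ)A(g^{-1})=1$. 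Likewise your ``$B$ HS iff $C$ HS'' step is fine but tacitly uses that $A(g)$ is invertible (so $\ker A=\ker A^*=0$), which you do have from the Riemann--Hilbert factorization hypothesis.
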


It follows that $W(g)$ only depends on $\theta_+=g_+^{-1}\partial g_+\in H^1(\Delta)$.

\begin{question} Is there a bijective correspondence between the set of cosets $[g]\in W^{1/2}(S^1,SU(2))/SU(2)$
such that $g$ has a Riemann-Hilbert factorization and the Hilbert space of $\theta_+\in H^1(\Delta)$ which are square integrable, where $g\to \theta_+=g_+^{-1}\partial g_+$?
\end{question}

\begin{remarks} (a) It is doubtful that one can express this criterion in terms of $g_+$: (1) $W^{1/2}$ and $L^{\infty}\cap W^{1/2}$ are not decomposing algebras, hence one cannot expect $g\in W^{1/2}$ to correspond to $g_+\in W^{1/2}$; and (2) for differential equations of the type $g_+^{-1}\partial g_+=\theta_+$,
the case we are considering is critical, and $\int_0^z \theta_+\in W^{1/2}(S^1)$ does not imply $g_+\in W^{1/2}(S^1)$ and vice versa.

(b) It is natural to search for a criterion in terms of $\theta_+$, because $PSU(1,1)$ acts equivariantly on
the mapping from the set of $g$ which have Riemann-Hilbert factorizations to square integrable $\theta_+\in H^1(\Delta)$ (the latter action is isometric).

(c) For $k_2(\zeta)\in W^{1/2}(S^1,SU(2))$ and
$$ k_2=\left(\begin{matrix} 1&x^*\\
0&1\end{matrix} \right)\left(\begin{matrix}\mathbf a_2&0\\
0&\mathbf a_2^{-1}\end{matrix} \right)\left(\begin{matrix} \alpha_2 (z)&\beta_2 (z)\\
\gamma_2 (z)&\delta_2 (z)\end{matrix} \right)$$
$\partial x,\partial \gamma_2,\partial \delta_2\in H^1(\Delta)$ and square integrable, but it is not clear that
the same is true for
$$
\left(\begin{matrix} \delta_2 &-\beta_2 \\
-\gamma_2&\alpha_2\end{matrix} \right)\left(\begin{matrix} \partial\alpha_2&\partial\beta_2\\
\partial\gamma_2&\partial\delta_2\end{matrix} \right)=\left(\begin{matrix} \delta_2\partial\alpha_2+\beta_2\partial \gamma_2&\delta_2\partial\beta_2-\beta_2\partial \delta_2\\ -\gamma_2\partial\alpha_2+\alpha_2\partial \gamma_2&-\gamma_2\partial\beta_2+\alpha_2\partial \delta_2\end{matrix} \right)\in H^1(\Delta) $$
A relevant fact is that $\gamma_2,\delta_2$ are bounded, but this does not seem to help.
\end{remarks}

In the rest of this subsection we want to write out the dependence of $W$ on $\theta_+$.
We can picture $W$ as a block matrix, $W=(W_{i,-j})$,
where each block $W_{i,-j}\in \mathcal L(\mathbb C^2)$,
and the index $i\ge 0$ denotes the row
(starting from the bottom) and $j\ge 1$ denotes the column
(starting from the left). If
\begin{equation}g_{+}=1+g_1z+g_2z^2+..\label{ 4.16}\end{equation}
where $g_j\in \mathcal L(\mathbb C^2)$, then
\begin{equation}g_{+}^{-1}=1+(-g_1)z+(-g_2+g_1^2)z^2+(-g_3+g_1g_2+g_2g_1-g_1^3)
z^3+..\end{equation}
\begin{equation}\label{ 4.17}\end{equation}
and in general
\begin{equation}(g_{+}^{-1})_n=\sum (-1)^lg_{i_1}..g_{i_l}\label{ 4.18}\end{equation}
where the sum is over all $positive$ multi-indices
$I=(i_1,..,i_l)$ of order $n$, i.e. $i_m>0$ and
$i_1+..+i_l=n$. By $(4.15)$,
\begin{equation}W_{i,-j}=1\cdot g_{i+j}+(g_{+}^{-1})_1g_{i+j-1}+..+(g_{+}^{-1})_
ig_j\end{equation}
\begin{equation}=\sum_{n=j}^{i+j}\sum (-1)^lg_{i_1}..g_{i_l}g_n\label{ 4.19}\end{equation}
where given $n$, the sum is over all positive multi-indices
of order $i+j-n$. This can also be written as
\begin{equation}W_{i,-j}=\sum (-1)^{l+1}g_{i_1}..g_{i_l}\label{ 4.20}\end{equation}
where the sum is now over all positive multi-indices of
order $i+j$ satisfying $i_l\ge j$. Thus, in terms of the
representation dependent expression $(\ref{4.16})$ for $g_{+}$,
$W$ has the form
\begin{equation}\left(\begin{matrix} ...&...\\
g_3-g_1g_2-g_2g_1+g_1^3&g_4-g_1g_3-g_2g_2+g_1^2g_2&...&\\
g_2-g_1g_1&g_3-g_1g_2&g_4-g_1g_3\\
g_1&g_2&g_3&..\end{matrix} \right)\label{4.21}\end{equation}

Now write
\begin{equation}\theta_{+}=(\theta_1+\theta_2z+..)dz\in H^1(D,\mathfrak g),\label{ 4.22}\end{equation}
where $\theta_i\in \mathfrak g$. Since $g_{+}$ is the solution of the integral
equation
\begin{equation}g_{+}(z)=1+\int_0^zg_{+}(w)\theta (w),\quad g_{+}(0)=1,\label{ 4.23}\end{equation}
it can be expressed in terms of iterated integrals:
\begin{equation}g_{+}(z)=1+\int\theta +\int \{(\int\theta )\theta \}+\int \{(\int
\theta )\int\end{equation}
\begin{equation}=1+g^{(1)}(\theta )+g^{(2)}(\theta )+..\label{4.24}\end{equation}
where
\begin{equation}g^{(n)}(\theta )=\int g^{(n-1)}(\theta )\theta =\sum\frac 1{i_1}
..\frac 1{i_1+..i_n}\theta_{i_1}..\theta_{i_n}z^{\vert I\vert}\label{ 4.25}\end{equation}
and the sum is over all positive multiindices
$I=(i_1,..,i_n)$.

Given a positive multi-index $I=(i_1,i_2,..,i_l)$, define
\begin{equation}c(I)=\frac 1{i_1}\frac 1{i_1+i_2}..\frac 1{i_1+..+i_l}.\label{ 4.26}\end{equation}
Observe that there is a bijective correspondence between
positive multi-indices $I$ of length $n$ and subsets of
$S\subset \{1,..,n-1\}$:  A multi-index $I$ induces a strictly
increasing sequence
\begin{equation}\lambda_1=i_1<\lambda_2=i_1+i_2<..<\lambda_l=i_1+..+i_l=n\label{ 4.27}\end{equation}
which is uniquely determined by the complement
\begin{equation}S=\{1,..,n\}\setminus \{\lambda_1,..,\lambda_l\}\subset \{1,..,
n-1\}.\label{ 4.28}\end{equation}
In terms of $S$, the $i_j$ are of the form
\begin{equation}i_j=1+\vert S_j\vert ,\label{ 4.29}\end{equation}
where $S_j$ is the $j$th connected component of $S$, and two
integers are connected if they are adjacent.  We can
then write
\begin{equation}c(I)=\frac {\prod_S\lambda}{n!}\label{ 4.30}\end{equation}
We have
\begin{equation}g_n=\sum c(I)\theta_I=\sum c(I)\theta_{i_1}..\theta_{i_l}\label{ 4.31}\end{equation}
where the sum is over all positive multi-indices of
order $n$. Plugging this into $(4.20)$ implies the following
formula.

\begin{proposition}As a function of $\theta_{+}\in H^1
(D,\mathfrak g)$,
for $i\ge 0$ and $j\ge 1$,
\begin{equation}W_{i,-j}=\sum C(I)\theta_{i_1}..\theta_{i_l},\end{equation}
where $I$ ranges over all positive multi-indices of order
$i+j$, and
\begin{equation}C(I)=\sum (-1)^{l+1}c(I_1)..c(I_l)\end{equation}
\begin{equation}=\sum (-1)^{l+1}\frac {\prod_{S_1}\lambda_1}{n!}..\frac {\prod_{
S_l}\lambda_l}{n!}\end{equation}
where the sum is over all ways of representing $I$ as a
tuple $(I_1,..,I_l)$ with $\vert I_l\vert\ge j$.
\end{proposition}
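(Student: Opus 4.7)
The plan is to prove the formula by substituting the iterated-integral expansion of $g_+$ (equation (4.31)) into the closed-form expression (4.20) for $W_{i,-j}$ and then reorganizing the resulting triple sum by grouping terms according to the concatenated multi-index.

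First I would fix $i\ge 0$ and $j\ge 1$ and recall (4.20), namely
\begin{equation*}
W_{i,-j}=\sum_{l\ge 1}\sum_{I=(i_1,\dots,i_l)}(-1)^{l+1}g_{i_1}\cdots g_{i_l},
\end{equation*}
where $I$ ranges over positive multi-indices of order $i+j$ with $i_l\ge j$. Next, replacing each factor $g_{i_k}$ by its iterated integral expansion
\begin{equation*}
g_{i_k}=\sum_{I_k:\,|I_k|=i_k} c(I_k)\,\theta_{I_k},
\end{equation*}
from (4.31), one gets
\begin{equation*}
g_{i_1}\cdots g_{i_l}=\sum_{I_1,\dots,I_l} c(I_1)\cdots c(I_l)\,\theta_{I_1}\theta_{I_2}\cdots \theta_{I_l},
\end{equation*}
with $|I_k|=i_k$ for each $k$.

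The key combinatorial observation is the bijection between pairs $(I,(I_1,\dots,I_l))$ and ``refined'' tuples: given the outer multi-index $I=(i_1,\dots,i_l)$ of order $i+j$ and inner multi-indices $I_k$ of orders $i_k$, the concatenation $I_1\ast I_2\ast\cdots\ast I_l$ is a positive multi-index $\tilde I$ of order $i+j$, and conversely any positive multi-index of order $i+j$ together with an ordered partition of its entries into $l$ consecutive blocks determines a unique $(I,(I_1,\dots,I_l))$. Under this bijection, the constraint $i_l\ge j$ in (4.20) transforms into the constraint $|I_l|\ge j$ on the last block. Therefore I would interchange the order of summation: first sum over the concatenated $\tilde I$ (call it $I$ in the conclusion) and then over all ways of writing $I=(I_1,\dots,I_l)$ with $|I_l|\ge j$. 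The $\theta$-string $\theta_{I_1}\cdots\theta_{I_l}$ depends only on $I$ (since juxtaposition respects concatenation), so one can factor it out. The remaining coefficient is exactly
\begin{equation*}
C(I)=\sum_{(I_1,\dots,I_l):\,|I_l|\ge j}(-1)^{l+1}c(I_1)\cdots c(I_l),
\end{equation*}
which, invoking the representation $c(I_k)=\prod_{S_k}\lambda_k/i_k!$ from (4.30), rewrites into the second displayed formula in the proposition.

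Most steps are bookkeeping; the one place to be careful is to make sure the constraint on the outer index $i_l\ge j$ translates cleanly to $|I_l|\ge j$ on the last block in the refined tuple, and that the signs $(-1)^{l+1}$ refer to the \emph{outer} length $l$ (the number of blocks) rather than the total length of $I$. Once this translation is pinned down, the identity is an immediate reorganization of a finite sum and no analytic issue arises, since the formula is really an identity of formal power series in $\theta_+$ that holds term by term.
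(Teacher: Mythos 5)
Your proposal is correct and takes essentially the same approach as the paper: the paper's proof consists of the single sentence ``Plugging this into (4.20) implies the following formula,'' and you have simply made explicit the combinatorial bookkeeping---the concatenation bijection between pairs $(I,(I_1,\dots,I_l))$ and refined multi-indices, the translation of the constraint $i_l\ge j$ into $|I_l|\ge j$, and the observation that $l$ in $(-1)^{l+1}$ counts blocks rather than the length of the concatenated index---that the paper leaves implicit.
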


Thus $W(\theta_{+})$ has the form
\begin{equation}\left(\begin{matrix} ..&..\\
\frac 1{3!}(2\theta_3-\theta_1\theta_2-2\theta_2\theta_1+\theta_1^
3)&..&..&\\
\frac 12(\theta_2-\theta_1\theta_1)&\frac 1{3!}(2\theta_3-[\theta_
1,\theta_2]-2\theta_1^3)&..\\
\theta_1&\frac 12(\theta_2+\theta_1\theta_1)&\frac 1{3!}(2\theta_
3+2\theta_1\theta_2+\theta_2\theta_1+\theta_1^3)&..\end{matrix} \right
)\end{equation}

\end{document}